\documentclass[11pt]{article}

\usepackage{float}

\usepackage{amsfonts}
\usepackage{amsthm}
\usepackage{amssymb,amsmath}
\usepackage{latexsym}

\usepackage{graphics}
\usepackage{psfrag}
\usepackage{dsfont}
\usepackage{color}
\usepackage{tikz}

\definecolor{gray5}{gray}{0.8}
\definecolor{gray1}{gray}{0.4}
\definecolor{gray2}{gray}{0.6}
\definecolor{gray3}{gray}{0.7}
\definecolor{gray4}{gray}{0.3}

\numberwithin{equation}{section}

\textwidth=6.6in 	
\hoffset=-0.8in 	
\textheight=8.6in 	
\voffset=-0.6in 		

\linespread{1.1}

\newtheorem     {thm}{Theorem}[section]

\newtheorem     {lem}[thm]{Lemma}

\newtheorem     {rem}[thm]{Remark}

\begin{document}

\title{Approximations of the allelic frequency spectrum in general supercritical branching populations} 
\date{}
\author{\textsc{Benoit Henry$^{1,2}$}}

\footnotetext[1]{Madynes team, INRIA Nancy -- Grand Est, IECL -- UMR 7503,
  Nancy-Universit\'e, Campus scientifique, B.P.\ 70239, 54506 Vand\oe uvre-l\`es-Nancy Cedex,
  France}

\footnotetext[2]{LORIA -- UMR 7503,
Nancy-Universit\'e, Campus scientifique, B.P.\ 70239, 54506 Vand\oe uvre-l\`es-Nancy Cedex,
  France, E-mail: \texttt{benoit.henry@univ-lorraine.fr}
}
\maketitle

\begin{abstract}
We consider a general branching population where the lifetimes of individuals are i.i.d.\ with arbitrary distribution and where each individual gives birth to new individuals at Poisson times independently from each other. In addition, we suppose that individuals experience mutations at Poissonian rate $\theta$ under the infinitely many alleles assumption assuming that types are transmitted from parents to offspring. This mechanism leads to a partition of the population by type, called the allelic partition. The main object of this work is the frequency spectrum $A(k,t)$ which counts the number of families of size $k$ in the population at time $t$. The process $(A(k,t),\ t\in\mathbb{R}_+)$ is an example of non-Markovian branching process belonging to the class of general branching processes counted by random characteristics. In this work, we propose methods of approximation to replace the frequency spectrum by simpler quantities. Our main goal is study the asymptotic error made during these approximations through central limit theorems.
In a last section, we perform several numerical analysis using this model, in particular to analyze the behavior of one of these approximations with respect to Sabeti's Extended Haplotype Homozygosity \cite{sabeti}.
\end{abstract}          
\bigskip

\noindent {\it MSC 2000 subject classifications:} Primary 60J80; secondary 
 92D10, 60J85, 60G51, 60K15, 60F05.\\

\noindent \textit{Key words and phrases.}  branching process  -- splitting tree -- Crump--Mode--Jagers process -- linear birth--death process   -- Central Limit Theorem.

\section{Introduction}
In this paper, we consider a general branching population where the lifetimes of the individuals and their reproductions processes are i.i.d.\ Moreover, we assume that their lifetimes are distributed according to an arbitrary probability distribution $\mathbb{P}_V$ and that the births occur, during their lifetime, according to a Poisson process with rate $b$. The tree underlying this dynamics is called a splitting tree. This class of random trees was introduced in \cite{GK} by Geiger and Kersting and has been widely studied in the last decade \cite{L10,L11,L13}.

We suppose, in addition, that mutations occur on individuals and that each new mutation confers to its holder a brand new type (i.e. never seen in the population): this is the  \emph{infinitely many alleles} assumption.
This allows modeling the occurrence of a new type in a population (such as a new species or a new phenotype in a given species). We also suppose that every individual inherits the type of its parent. This model leads to a partition of the population by types. The frequency spectrum of the population alive at time $t$ is defined as the sequence of number $\left(A(k,t)\right)_{k\geq1}$ where, for each $k$, $A(k,t)$ is the number of families of size $k$ in the population. The famous example of Ewens sampling formula gives explicit expression for the law of the frequency spectrum \cite{EV} when the genealogy is given by the Kingman's coalescent.  Other works studied similar quantities in the case of Galton-Waston branching processes (see \cite{Ber} or \cite{Gri}). In our model, the frequency spectrum has also been widely studied in the past \cite{CL1,CL2,CLR,CH}.

Another object of interest is the process $(N_{t},\ t\in\mathbb{R}_{+})$ which counts the number of living individuals in the population at a given time $t$. This process is known as binary homogeneous Crump-Mode-Jagers process. One of the main result of the theory of such process is the law of large number which gives in our particular case that $e^{-\alpha t }N_{t}$ converges almost surely to a random variable $\mathcal{E}$ which is exponential conditionally on non-extinction (for some positive constant $\alpha$).

As for $e^{-\alpha t }N_{t}$, it is also known that the quantities $e^{-\alpha t}A(k,t)$ converge almost surely to $c_{k}\mathcal{E}$, where $c_{k}$ is an explicit constant. This result can be easily obtained by conjunction of the works of \cite{CL1} and \cite{Rich} using the theory of general branching processes counted by random characteristics (a complete statement can be found in \cite{CLR}). An alternative proof avoiding the use of the general branching processes theory can be found in \cite{CH}.

It appears that the frequency spectrum $(A(k,t))_{k\geq 1}$ is a quantity which is hard to manipulate from the probabilistic point of view (see \cite{CL1,CL2,CH}). This implies that such a model is inconvenient for practical applications. In this work we propose to use the laws of large numbers in order to replace $(A(k,t))_{k\geq 1}$ by more manipulable quantities and propose to investigate the error made during this approximation. The first possible approximation is the following.

\textbf{Approximation 1:}
\[
(A(k,t))_{k\geq 1}\approx (c_{k})_{\geq 1}e^{\alpha t}\mathcal{E}.
\]

However, this is unsatisfactory for practical applications since the random variable $\mathcal{E}$ is not observable at finite times. Another idea is to exploit the fact that the random variable appearing in the law of large numbers for $A(k,t)$ and for $N_{t}$ is the same. This leads to the second approximation.

\textbf{Approximation 2:}
\[
A(k,t)\approx  (c_{k})_{\geq 1}N_{t}.
\]
In order to investigate the errors made during this approximation (at least asymptotically), one would like to have central limit theorems associated to the law of large numbers for the frequency spectrum.
 In a previous work \cite{H1}, we have showed that the error in the convergence of $e^{-\alpha t }N_{t}$ is of order $e^{\alpha t/2}$ and obtained a central limit theorem for this error. An important aspect of the method introduced in \cite{H1} is that it can be used to derive CLTs for other branching processes counted by random characteristics. In particular, the main goal of this work is to obtain central limit theorems for the convergence of the frequency spectrum.
We also study the Markovian cases (when $\mathbb{P}_{V}$ is exponential) where we can obtain more information on the limit distribution.

The original motivation of this study (and of other works on this model \cite{CL1,CL2,CH}) comes from the works of Sabeti and al. \cite{sabeti} where the frequency spectrum is used to detecte positive selection of an allele in an increasing population. More specifically, suppose that you want to detect the positive selection of an allele on a given gene. The main idea is that, under neutral evolution, the allele under consideration needs a long time to reach a high frequency in the population. Hence, if the frequency of the allele w.r.t.\ its age is significantly higher than the expected frequency (w.r.t.\ its age and under neutral growth), this anomaly would suggest a positive selection of this allele. The main problem is now to be able to estimate how old the allele is. Sabeti and al. remarked that the allelic partition can be used as a clock to estimate the age of an allele. More precisely, their study begins by selecting a small region of chromosome which characterized the presence of the allele under consideration. Now, the type of an individual, at a distance $x$ (measured in kb) from the core region, is the sequence of $x$ bases following the core region (excluded). As a consequence, the allelic partition of the subpopulation carrying the allele becomes thinner as $x$ increases (because the higher $x$ is, the higher is the probability that a mutation occurred on the sequence of $x$ bases). Finally, the speed of fragmentation of the allelic partition, when $x$ increases, gives clues on the age of the allele. One of the purposes of this model is to understand how the frequency spectrum evolves under neutral evolution. In this work, we discuss some aspects of this method and give some directions in order to construct rigorous tests for the positive selection (see Section \ref{sec:num}).

The paper is organized as follows. Section \ref{sec:models} is devoted to the mathematical description of the model and to preliminary results which are used in the sequel. Section \ref{sec:results} gives the mains theoretical results of this work and, in particular, a central limit theorems which allow to study the error in our proposed approximations. Section \ref{proof:th2}, \ref{proof:th3}, \ref{proof:th4} are devoted to the proofs of Theorem \ref{thm:tclAllelic}, \ref{thm:tclexp} and \ref{thm:cltFin} respectively. Finally, in Section \ref{sec:num} we perform some numerical studies on the model to stress the quality of our approximation. The discussions about the method of Sabeti and al. are given in this last section. An appendix contains some technical proofs and a section which is a reminder of renewal theory.
\section{Model and preliminaries}
\label{sec:models}
In this work, we consider a branching population with the following dynamic: starting with a single individual (called the \emph{ancestor}) whose lifetime is distributed according to an arbitrary probability distribution $\mathbb{P}_{V}$, this \emph{ancestor} gives birth to new individuals at a Poissonian rate $b$. Each birth event giving a single new individual. From this point, each child of the ancestor lives and gives birth according to the same mechanism independently from the other individuals in the population. This formal description can be made rigorous through the definition of a probability distribution on the set of chronological trees. For the details of such construction, we refer the reader to \cite{L10}. The first quantity of interest when studying such population is the number $N_{t}$ of alive individuals in the population at a fixed time $t$ (assuming that the time $t=0$ is birth-date of the ancestor). The process $(N_{t},\ t\in\mathbb{R}_{+})$ is known as binary homogeneous Crump-Mode-Jagers process and is a simple example of non-Markovian branching process. In the sequel, we denote by $W(t)$ the expectation of $N_{t}$ conditionally on the non-extinction at time $t$. That is
\[
W(t):=\mathbb{E}\left[N_{t}\mid N_{t}>0  \right].
\] 

In \cite{L10}, the author shows that the random variable $N_{t}$ is geometrically distributed under $\mathbb{P}_{t}$ with parameter $\frac{1}{W(t)}$.
In addition, the author of \cite{L10} showed that the Laplace transform of $W$ can be linked to the Laplace transform of $\mathbb{P}_{V}$ through the relation
\[
\int_{[0,\infty)}W(s)e^{-\lambda s}\ ds=\frac{1}{\psi(\lambda)},\ \forall \lambda>\alpha,
\]
where
\begin{equation}
\label{eq:laplace}
\psi(x)=x-\int_{(0,\infty]}\left(1-e^{-rx}\right)b\mathbb{P}_{V}(dr), \ \ x\in\mathbb{R}_{+},
\end{equation}

and $\alpha$ is the largest root of $\psi$. 
In particular, the Laplace transform of $\mathbb{P}_{V}$ can be expressed in terms of $\psi$,
\begin{equation}
\label{eq:laplacepv}
\int_{\mathbb{R}_{+}}e^{-\lambda v}\mathbb{P}_{V}(dv)=1+\frac{\psi(\lambda)-\lambda}{b}.
\end{equation}

In this work, we assume that $\alpha$ is a strictly positive real number. This case is called the supercritical case and is equivalent to $b\mathbb{E}[V]>1$. In the supercritical case, the real number $\alpha$ is called the Malthusian parameter of the population because it corresponds to the mean exponential growth rate of the population.
Before gong further, let us remark that equation \eqref{eq:laplacepv} leads easily to the following identity:
\begin{equation}
\label{eq:intalpha}
\int_{\mathbb{R}_{+}}e^{-\alpha v}\mathbb{P}_{V}(dv)=1-\frac{\alpha}{b}.
\end{equation}
Many previous works demonstrate \cite{CL1,CL2,CLR} that some properties of the splitting tree were easier to study on the tree describing only the genealogical relation between the lineages of the individuals alive at time $t$. For instance, in the model with mutations, the difference between two individuals in term of type lies only on the time past since their lineages has diverged. Hence, this particular genealogical tree, known as \emph{coalescent point processes} (CPP), contains the essential information to study the allelic partition.  In order to derive the law of that genealogical tree, one needs to characterize the joint law of the \emph{times of coalescence} between pairs of individuals in the population, which are the times since their lineages have split.

In \cite{L10}, the author defines an order on the set of individuals alive at a fixed time $t$ and consider the sequence of times of coalescences $(H_{i})_{0\leq i \leq N_{t}-1}$ between two consecutive individuals (that is $H_{i}$ is the time passed since the lineage of individuals $i$ and $i+1$ have diverged) with the convention that the older lineage is the first one (i.e.\ $H_{0}=t$).
 Moreover, in \cite{L10}, the author shows that the random vector $(H_{i})_{0\leq i \leq N_{t}-1}$ can be produced from a sequence $(H_{i})_{i\geq 1}$ of i.i.d.\ random variable stopped at its first value greater than $t$ and such that
\[
\mathbb{P}\left(H_{1}>s\right)=\frac{1}{W(s)}, \quad s\in\mathbb{R}_{+}.
\]

To summarize, given the population is still alive at time $t$, one can forget about the details of the splitting tree and code the genealogy by a new object called the {\it coalescent point process} (CPP). Its law is the law of a sequence $\left(H_{i}\right)_{0\leq i\leq N_{t}-1}$, where the family $\left(H_{i}\right)_{i\geq 1}$ is i.i.d. with the same law as $H$, stopped before its first value $H_{N_{t}}$ greater than $t$, and $H_{0}$ is deterministic equal to $t$ (see Figure \ref{fig: contour}).

Although we do not use directly the CPP in this work, this object allowed us to obtain \cite{CH} formulas for the moments of the frequency spectrum which are widely used in the sequel.
\begin{rem}
\label{rem:indep}
Let $N$ be a integer valued random variable. In the sequel we said that a random vector with random size $\left(X_{i}\right)_{1\leq i\leq N}$ form an i.i.d.\ family of random variables independent of $N$, if and only if
\[
\left(X_{1},\dots,X_{N}\right)\overset{d}{=}\left(\tilde{X_{1}},\dots,\tilde{X}_{N}\right),
\] 
where $\left(\tilde{X}_{i}\right)_{i\geq 1}$ is a sequence of i.i.d.\ random variables distributed as $X_{1}$ independent of $N$.
\end{rem}

\begin{figure}[ht]

\unitlength 2mm 
\linethickness{0.4pt}

\begin{picture}(66,33)(-5,10)
\put(4,39.875){\line(1,0){62}}
\put(10,40){\line(0,-1){9}}
\put(14,40){\line(0,-1){11.5}}
\put(18,40){\line(0,-1){4}}
\put(22,40){\line(0,-1){7}}
\put(26,40){\line(0,-1){16}}
\put(30,40){\line(0,-1){6}}
\put(34,39.875){\line(0,-1){8.5}}
\put(38,40){\line(0,-1){5.5}}
\put(42,40){\line(0,-1){11.5}}
\put(46,40){\line(0,-1){3.625}}
\put(50,40){\line(0,-1){22}}
\put(54,40){\line(0,-1){5}}
\put(58,40){\line(0,-1){7.5}}
\put(62,39.875){\line(0,-1){5}}
\put(6,40){\line(0,-1){25}}
\put(6,15){\line(0,-1){.8}}
\put(6,13.33){\line(0,-1){.8}}
\put(6,11.73){\line(0,-1){.8}}
\put(9.93,30.93){\line(-1,0){.8}}
\put(8.33,30.93){\line(-1,0){.8}}
\put(6.73,30.93){\line(-1,0){.8}}
\put(13.93,28.43){\line(-1,0){.8889}}
\put(12.152,28.43){\line(-1,0){.8889}}
\put(10.374,28.43){\line(-1,0){.8889}}
\put(8.596,28.43){\line(-1,0){.8889}}
\put(6.819,28.43){\line(-1,0){.8889}}
\put(17.805,35.93){\line(-1,0){.8}}
\put(16.205,35.93){\line(-1,0){.8}}
\put(14.605,35.93){\line(-1,0){.8}}
\put(21.93,32.93){\line(-1,0){.8889}}
\put(20.152,32.93){\line(-1,0){.8889}}
\put(18.374,32.93){\line(-1,0){.8889}}
\put(16.596,32.93){\line(-1,0){.8889}}
\put(14.819,32.93){\line(-1,0){.8889}}
\put(25.93,23.93){\line(-1,0){.9524}}
\put(24.025,23.93){\line(-1,0){.9524}}
\put(22.12,23.93){\line(-1,0){.9524}}
\put(20.215,23.93){\line(-1,0){.9524}}
\put(18.311,23.93){\line(-1,0){.9524}}
\put(16.406,23.93){\line(-1,0){.9524}}
\put(14.501,23.93){\line(-1,0){.9524}}
\put(12.596,23.93){\line(-1,0){.9524}}
\put(10.692,23.93){\line(-1,0){.9524}}
\put(8.787,23.93){\line(-1,0){.9524}}
\put(6.882,23.93){\line(-1,0){.9524}}
\put(29.93,33.93){\line(-1,0){.8}}
\put(28.33,33.93){\line(-1,0){.8}}
\put(26.73,33.93){\line(-1,0){.8}}
\put(33.93,31.43){\line(-1,0){.8889}}
\put(32.152,31.43){\line(-1,0){.8889}}
\put(30.374,31.43){\line(-1,0){.8889}}
\put(28.596,31.43){\line(-1,0){.8889}}
\put(26.819,31.43){\line(-1,0){.8889}}
\put(37.93,34.43){\line(-1,0){.8}}
\put(36.33,34.43){\line(-1,0){.8}}
\put(34.73,34.43){\line(-1,0){.8}}
\put(41.93,28.43){\line(-1,0){.9412}}
\put(40.047,28.43){\line(-1,0){.9412}}
\put(38.165,28.43){\line(-1,0){.9412}}
\put(36.283,28.43){\line(-1,0){.9412}}
\put(34.4,28.43){\line(-1,0){.9412}}
\put(32.518,28.43){\line(-1,0){.9412}}
\put(30.636,28.43){\line(-1,0){.9412}}
\put(28.753,28.43){\line(-1,0){.9412}}
\put(26.871,28.43){\line(-1,0){.9412}}
\put(45.93,36.43){\line(-1,0){.8}}
\put(44.33,36.43){\line(-1,0){.8}}
\put(42.73,36.43){\line(-1,0){.8}}
\put(49.93,17.93){\line(-1,0){.9778}}
\put(47.974,17.93){\line(-1,0){.9778}}
\put(46.019,17.93){\line(-1,0){.9778}}
\put(44.063,17.93){\line(-1,0){.9778}}
\put(42.107,17.93){\line(-1,0){.9778}}
\put(40.152,17.93){\line(-1,0){.9778}}
\put(38.196,17.93){\line(-1,0){.9778}}
\put(36.241,17.93){\line(-1,0){.9778}}
\put(34.285,17.93){\line(-1,0){.9778}}
\put(32.33,17.93){\line(-1,0){.9778}}
\put(30.374,17.93){\line(-1,0){.9778}}
\put(28.419,17.93){\line(-1,0){.9778}}
\put(26.463,17.93){\line(-1,0){.9778}}
\put(24.507,17.93){\line(-1,0){.9778}}
\put(22.552,17.93){\line(-1,0){.9778}}
\put(20.596,17.93){\line(-1,0){.9778}}
\put(18.641,17.93){\line(-1,0){.9778}}
\put(16.685,17.93){\line(-1,0){.9778}}
\put(14.73,17.93){\line(-1,0){.9778}}
\put(12.774,17.93){\line(-1,0){.9778}}
\put(10.819,17.93){\line(-1,0){.9778}}
\put(8.863,17.93){\line(-1,0){.9778}}
\put(6.907,17.93){\line(-1,0){.9778}}
\put(53.93,34.93){\line(-1,0){.8}}
\put(52.33,34.93){\line(-1,0){.8}}
\put(50.73,34.93){\line(-1,0){.8}}
\put(57.93,32.43){\line(-1,0){.8889}}
\put(56.152,32.43){\line(-1,0){.8889}}
\put(54.374,32.43){\line(-1,0){.8889}}
\put(52.596,32.43){\line(-1,0){.8889}}
\put(50.819,32.43){\line(-1,0){.8889}}
\put(61.93,34.93){\line(-1,0){.8}}
\put(60.33,34.93){\line(-1,0){.8}}
\put(58.73,34.93){\line(-1,0){.8}}
\put(6,41){\makebox(0,0)[cc]{$0$}}
\put(10,41){\makebox(0,0)[cc]{$1$}}
\put(14,41){\makebox(0,0)[cc]{$2$}}
\put(18,41){\makebox(0,0)[cc]{$3$}}
\put(22,41){\makebox(0,0)[cc]{$4$}}

\put(26,41){\makebox(0,0)[cc]{$5$}}
\put(30,41){\makebox(0,0)[cc]{$6$}}
\put(34,41){\makebox(0,0)[cc]{$7$}}
\put(38,41){\makebox(0,0)[cc]{$8$}}
\put(42,41){\makebox(0,0)[cc]{$9$}}
\put(46,41){\makebox(0,0)[cc]{$10$}}

\put(50,41){\makebox(0,0)[cc]{$12$}}
\put(54,41){\makebox(0,0)[cc]{$13$}}
\put(58,41){\makebox(0,0)[cc]{$14$}}
\put(62,41){\makebox(0,0)[cc]{$15$}}
\end{picture}

\caption{ A coalescent point process for $16$ individuals, hence $15$ branches. (Image by A. Lambert)}
\label{fig: contour}
\end{figure}
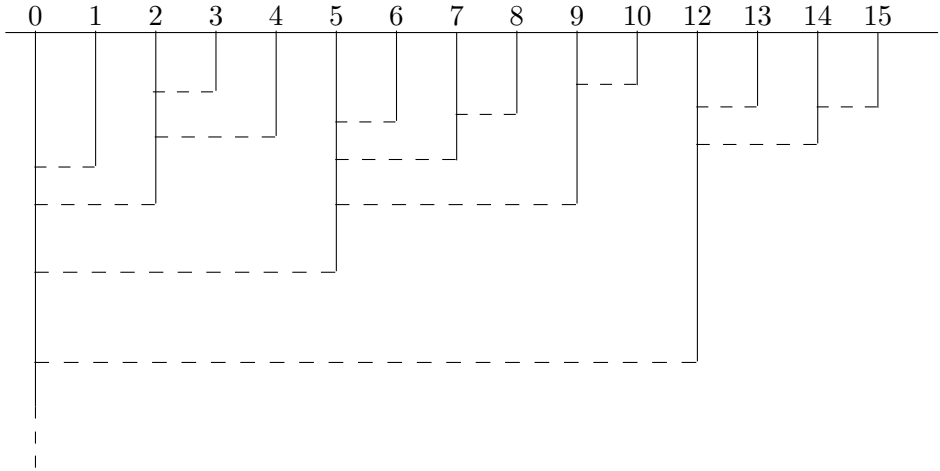

 Before going further, let us point out that if we define $N_{t}$ as the first value of the sequence $\left(H_{i}\right)_{i\geq 1}$ greater than $t$, i.e.\
 \[
 N_{t}=\inf\{i\geq 1 \mid H_{i}>t\},
 \]
 then $N_{t}$ is indeed geometric with the expected parameter. More precisely, for a positive integer $k$,
 \begin{equation}
 \label{eq:loiNt}
 \mathbb{P}\left(N_{t}=k\mid N_{t}>0\right)=\frac{1}{W(t)}\left(1-\frac{1}{W(t)}\right)^{k-1}.
 \end{equation}
 In particular,
  \begin{equation}
  \label{eq:momNt}
  \mathbb{E}\left[N_{t}\mid N_{t}>0\right]=W(t).
  \end{equation}
 Moreover, it can be showed (see \cite{Rich}), that
 \begin{equation}
 \label{eq:NtNoCond}
 \mathbb{E}N_{t}=W(t)-W\star\mathbb{P}_{V}(t),
 \end{equation}
 and
 \begin{equation}
 \label{eq:probaNoCond}
 \mathbb{P}\left(N_{t}>0 \right)=1-\frac{W\star\mathbb{P}_{V}(t)}{W(t)},
 \end{equation}
 where
 \[
 W\star\mathbb{P}_{V}(t):=\int_{[0,t]}W(t-s)\mathbb{P}_{V}(ds).
 \]

Now, let us introduce the mathematical formalism for the mutation process used in this work  (this formalism comes from \cite{CH}).
Since only the mutations occurring on the lineages of living individuals at time $t$ can be observed, it follows from standard properties on Poisson point processes, that the mutation process can be defined directly on the CPP. So, let $\mathcal{P}$ be a Poisson random measure on $[0,t]\times\mathbb{N}$ with intensity measure $\theta \lambda\otimes C$, where $C$ is the counting measure on $\mathbb{N}$, then the mutation random measure $\mathcal{N}$ on the CPP is defined by
 \[
 \mathcal{N}\left(da,di\right)=\mathds{1}_{H_{i}>t-a}\mathds{1}_{i<\mathcal{N}_{t}}\mathcal{P}\left(di,da\right),
 \]
 where an atom at $(a,i)$ means that the $i$th branch experiences a mutation at time $t-a$.
 We suppose that each individual inherits the type of its parent. This rule yields a partition of the population by types. The distribution of the sizes of the families in the population is called the frequency spectrum and is defined as the sequence $\left(A(k,t)\right)_{k\geq1}$ where $A(k,t)$ is the number of types carried by exactly $k$ individuals in the alive population at time $t$, excluding the family holding the ancestral type of the population (i.e. individuals holding the same type as the root at time $0$). This last family is called \emph{clonal}, as the ancestral type.
 
In the study of the frequency spectrum, an important role is played by the law of the clonal family. We denote by $Z_{0}(t)$ the size of this family at time $t$.

To study this family, it is easier to consider the clonal splitting tree constructed from the original splitting tree by cutting every branches beyond mutations. This clonal splitting tree is a standard splitting tree without mutations, where individuals are killed as soon as they die or experience a mutation. The new lifespan law is therefore the minimum between an exponential random variable of parameter $\theta$ and an independent copy of $V$.
 It is straightforward by simple manipulations of Laplace transforms that the Laplace exponent of the corresponding contour process is
 \[
 \psi_{\theta}(x)=x-\int_{(0,\infty]}\left(1-e^{-rx}\right)\Lambda_{\theta}(dr)=\frac{x\psi(x+\theta)}{x+\theta}.
 \]
 We denote by $W_{\theta}$ the corresponding scale function.
 This leads to,
 \[
 \mathbb{P}\left(Z_{0}(t)=k\mid Z_{0}(t)>0\right)=\frac{1}{W_{\theta}(t)}\left(1-\frac{1}{W_{\theta}(t)} \right)^{k-1}.
 \]
 When $\alpha>\theta$ (resp. $\alpha=\theta$, $\alpha<\theta$), this new tree is supercritical (resp. critical, sub-critical) and we talk about \emph{clonal supercritical case} (resp. \emph{critical}, \emph{sub-critical} case).
 
Moreover, the law of $Z_{0}$ conditionally on the event $\{N_{t}>0\}$ can be obtained, and is given by
 \begin{equation}
 \mathbb{P}\left(Z_{0}(t)=k\mid N_{t}>0\right)=\frac{e^{-\theta t}W(t)}{W_{\theta}(t)^{2}}\left(1-\frac{1}{W_{\theta}(t)} \right)^{k-1},\quad \forall k\geq 1. \label{eq:loizzero}
 \end{equation}
  For the rest of this paper, unless otherwise stated, the notation $\mathbb{P}_{t}$ refers to $\mathbb{P}\left(.\mid N_{t}>0 \right)$ whereas $\mathbb{P}_{\infty}$ refers to the probability measure conditioned on the non-extinction event (which has positive probability in the supercritical case).

 Finally, we recall the asymptotic behavior of the scale functions $W(t)$ and $W_{\theta}(t)$ which is widely used in the sequel,
 \begin{lem}(\cite[Thm. 3.21]{CL1})
 \label{lem: asyComp}
 There exist a positive constant $\gamma$ such that,
 \[
 e^{-\alpha t}\psi'(\alpha)W(t)-1=\mathcal{O}\left(e^{-\gamma t} \right).
 \]
 In the case that $\theta<\alpha$ (clonal supercritical case), 
 \[
 W_{\theta}(t)\underset{t\to\infty}{\sim}\frac{e^{\left(\alpha-\theta \right)t}}{\psi_{\theta}(\alpha-\theta)}.
 \]
 In the case that $\theta>\alpha$ (clonal sub-critical case),
 \[
 W_{\theta}(t)=\frac{\theta}{\psi(\theta)}+\mathcal{O}\left(e^{-\left(\theta-\alpha\right)t} \right).
 \]
 In the case where $\theta=\alpha$ (clonal critical case),
 \[
 W_{\theta}(t)\underset{t\to\infty}{\sim}\frac{\theta t}{\psi'(\alpha)}.
 \]
 \end{lem}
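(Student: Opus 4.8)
The plan is to extract the asymptotics from the Laplace identities $\int_0^\infty W(s)e^{-\lambda s}\,ds=1/\psi(\lambda)$ (for $\lambda>\alpha$) and $\int_0^\infty W_\theta(s)e^{-\lambda s}\,ds=1/\psi_\theta(\lambda)$ (for $\lambda$ beyond the largest root of $\psi_\theta$), by recognising each as a renewal equation and invoking the renewal theorem in a quantitative form with exponentially small error.

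Consider first $W$. Writing $\psi(\lambda)=\lambda-b\big(1-\hat F(\lambda)\big)$ with $\hat F(\lambda)=\int e^{-\lambda r}\mathbb P_V(dr)$ and $\overline F(s)=\mathbb P_V((s,\infty))$, so that $b\big(1-\hat F(\lambda)\big)/\lambda=b\int_0^\infty e^{-\lambda s}\overline F(s)\,ds$, expanding $1/\psi(\lambda)=\frac1\lambda\sum_{n\ge0}\big(b\int_0^\infty e^{-\lambda s}\overline F(s)\,ds\big)^n$ and inverting gives $W=1\star\sum_{n\ge0}\mu^{\star n}$ with $\mu(ds)=b\,\overline F(s)\,ds$; equivalently $W$ solves the renewal equation $W(t)=1+\int_0^t W(t-s)\,b\,\overline F(s)\,ds$. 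Relation \eqref{eq:intalpha} says precisely that the tilted measure $\widetilde\mu(ds):=e^{-\alpha s}b\,\overline F(s)\,ds$ has total mass $b(1-\hat F(\alpha))/\alpha=1$, i.e. is a probability measure; it has a density (hence is spread out) and, since $\overline F\le1$, finite exponential moments of every order below $\alpha$. Tilting the renewal representation by $e^{-\alpha s}$ yields $e^{-\alpha t}W(t)=\big(g\star\widetilde U\big)(t)$, where $g(u)=e^{-\alpha u}$ is bounded and exponentially decaying and $\widetilde U=\sum_{n\ge0}\widetilde\mu^{\star n}$ is the renewal measure of $\widetilde\mu$, whose mean is read off from $\psi$ as $m:=\int_0^\infty s\,\widetilde\mu(ds)=\frac{d}{d\lambda}\big(\psi(\lambda)/\lambda\big)\big|_{\lambda=\alpha}=\psi'(\alpha)/\alpha$ (using $\psi(\alpha)=0$; here $\psi'(\alpha)>0$ by convexity of $\psi$). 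The key renewal theorem gives $e^{-\alpha t}W(t)\to\frac1m\int_0^\infty g=1/\psi'(\alpha)$, and its refinement under an exponential moment — a Stone-type decomposition $\widetilde U(dt)=\frac1m\,dt+\nu(dt)$ with $\nu$ carrying a finite exponential moment, which is exactly the content of the renewal reminder in the appendix — upgrades this to $e^{-\alpha t}\psi'(\alpha)W(t)-1=\mathcal O(e^{-\gamma t})$ for some $\gamma>0$.

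The statements on $W_\theta$ follow by running the same scheme with $\psi_\theta(x)=x\psi(x+\theta)/(x+\theta)$, the only difference being the position and multiplicity of the largest root of $\psi_\theta$. Since $\psi$ is convex with $\psi(0)=0$ and $\psi'(0)=1-b\mathbb E[V]<0$ in the supercritical case, the only roots of $\psi$ are $0$ and $\alpha$, hence those of $\psi_\theta$ are $0$ and $\alpha-\theta$. If $\theta<\alpha$, the largest root is $\alpha-\theta>0$ and simple, and tilting by $e^{-\alpha s}$ again produces the probability measure $\widetilde\mu$: one gets $e^{-(\alpha-\theta)t}W_\theta(t)=\big(g_1\star\widetilde U\big)(t)$ with $g_1(u)=e^{-(\alpha-\theta)u}$, whence $e^{-(\alpha-\theta)t}W_\theta(t)\to\frac1m\int_0^\infty g_1=\alpha/\big((\alpha-\theta)\psi'(\alpha)\big)=1/\psi_\theta'(\alpha-\theta)$, where $\psi_\theta'(\alpha-\theta)=(\alpha-\theta)\psi'(\alpha)/\alpha$ by differentiating the product formula. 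If $\theta>\alpha$, the largest root is $0$, the relevant measure $\mu_\theta(ds)=e^{-\theta s}b\,\overline F(s)\,ds$ is defective of mass $1-\psi(\theta)/\theta<1$, so $\sum_{n\ge0}\mu_\theta^{\star n}$ is a finite measure of total mass $\theta/\psi(\theta)$ and $W_\theta(t)=(1\star\sum_n\mu_\theta^{\star n})(t)\to\theta/\psi(\theta)$; the tail of $\sum_n\mu_\theta^{\star n}$ decays like $e^{-(\theta-\alpha)t}$ because $b\int e^{-(\theta-\gamma)s}\overline F(s)\,ds=1$ has root $\gamma=\theta-\alpha$, giving $W_\theta(t)=\theta/\psi(\theta)+\mathcal O(e^{-(\theta-\alpha)t})$. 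Finally, if $\theta=\alpha$ the root $0$ is double: the relevant measure is $\widetilde\mu$ again, now of mass exactly $1$, so $W_\theta=1\star\widetilde U$ and Blackwell's renewal theorem yields $W_\theta(t)\sim t/m=\alpha t/\psi'(\alpha)=\theta t/\psi'(\alpha)$.

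The only delicate point is the quantitative one, passing from convergence to an exponential rate. It rests on two facts that are automatic here — $\widetilde\mu$ (and its $\psi_\theta$-analogue) is spread out, having a density, and exponentially integrable, since $e^{-\alpha s}\overline F(s)\le e^{-\alpha s}$ — and on the absence of further zeros of $\psi$ (resp. $\psi_\theta$) on a vertical line $\operatorname{Re}\lambda=\alpha-\gamma$ inside the analyticity domain, so that $1/\psi$ extends holomorphically across it with controlled growth; this last point follows from the elementary bound $\operatorname{Re}\psi(\sigma+i\tau)\le\psi(\sigma)$ together with $\psi(\sigma)<0$ for $\sigma\in(0,\alpha)$. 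Granting these, the error estimates come either from the Stone decomposition of the renewal measure or from shifting the contour in the Laplace inversion formula.
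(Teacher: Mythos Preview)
The paper does not prove this lemma: it is quoted verbatim from \cite[Thm.~3.21]{CL1}, so there is no in-paper argument to compare against. Your write-up is therefore a genuine contribution rather than a reconstruction, and the strategy you chose---inverting the Laplace identities $\int e^{-\lambda s}W(s)\,ds=1/\psi(\lambda)$ and $\int e^{-\lambda s}W_\theta(s)\,ds=1/\psi_\theta(\lambda)$ by reading them as renewal equations, then tilting by $e^{-\alpha s}$ and invoking the key renewal theorem---is the natural one and is carried out correctly. The computations of the constants are all right: the tilted mean $m=\psi'(\alpha)/\alpha$ via $\frac{d}{d\lambda}(\psi(\lambda)/\lambda)|_{\lambda=\alpha}$; the identification of the same tilted probability $\widetilde\mu$ in all three $W_\theta$ regimes (a nice observation, since $e^{-(\alpha-\theta)s}\,b e^{-\theta s}\overline F(s)\,ds=\widetilde\mu(ds)$ regardless of the sign of $\alpha-\theta$); the defective case giving total mass $\theta/\psi(\theta)$; and the critical case via the elementary renewal theorem. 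Incidentally, you silently repaired a typo in the statement: the paper writes $\psi_\theta(\alpha-\theta)$ in the denominator of the supercritical asymptotic, which vanishes, whereas your derivation correctly produces $\psi_\theta'(\alpha-\theta)=(\alpha-\theta)\psi'(\alpha)/\alpha$.

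One correction and one caveat. The correction: you claim that the Stone-type decomposition ``is exactly the content of the renewal reminder in the appendix.'' It is not. The appendix (Section~\ref{ssec:renew}) contains only the DRI criteria and the bare key renewal theorem (Theorem~\ref{thm:nonLatticeRen}), with no rate; you will need to cite an outside source (e.g.\ Stone's theorem for spread-out renewal measures, or an exponential-moment refinement as in Asmussen, \emph{Applied Probability and Queues}) for the $\mathcal O(e^{-\gamma t})$ error. The caveat: your last paragraph, which handles the only delicate point, is more of an outline than a proof. The two ingredients you name---spread-outness and exponential integrability of $\widetilde\mu$---are indeed what is needed, and your observation that $\operatorname{Re}\psi(\sigma+i\tau)\le\psi(\sigma)<0$ for $\sigma\in(0,\alpha)$ rules out spurious poles of $1/\psi$ in the strip, but to make the contour-shift argument rigorous you also need $1/\psi(\sigma+i\tau)$ to be integrable in $\tau$ on the shifted line, which follows from $\psi(\lambda)=\lambda+O(1)$ as $|\operatorname{Im}\lambda|\to\infty$. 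Stating this, or simply invoking Stone's theorem directly, would close the gap.
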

 For a purpose, a more precise description of the asymptotic behavior of $W$ is needed. It is given by the following result.
 \begin{lem}{\cite[Prop. 5.1]{H1}}
 	\label{lem:WComp}
There exists a positive non-increasing c\`adl\`ag function $F$ such that
\[
W(t)=\frac{e^{\alpha t}}{\psi'(\alpha)}-e^{\alpha t}F(t),\quad t\geq 0,
\]
and
\[
\lim\limits_{t\to \infty}e^{\alpha t}F(t)=
\begin{cases}
\frac{1}{b\mathbb{E}V-1} & \mbox{if}\ \mathbb{E}V<\infty, \\ 
0 & \mbox{otherwise.}
\end{cases}
\]
 \end{lem}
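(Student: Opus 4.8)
The plan is to work throughout with the function $F(t):=\dfrac{1}{\psi'(\alpha)}-e^{-\alpha t}W(t)$, so that the displayed identity for $W$ holds by definition; the content then splits into the three qualitative properties of $F$ and the evaluation of $\lim_{t\to\infty}e^{\alpha t}F(t)$. First I would convert the Laplace identity $\int_{[0,\infty)}W(s)e^{-\lambda s}\,ds=1/\psi(\lambda)$, together with \eqref{eq:laplace}, into the renewal equation
\[
W(t)=1+b\int_{0}^{t}\bar{\mathbb P}_{V}(t-s)\,W(s)\,ds,\qquad \bar{\mathbb P}_{V}(u):=\mathbb P_{V}\big((u,\infty]\big),
\]
obtained by inverting $\psi(\lambda)\widehat{W}(\lambda)=1$ after writing $1-\widehat{\mathbb P_{V}}(\lambda)=\lambda\,\widehat{\bar{\mathbb P}_{V}}(\lambda)$. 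Differentiating and invoking \eqref{eq:NtNoCond} gives $W'(t)=b\,\mathbb EN_{t}$, and then \eqref{eq:momNt} turns this into $W'(t)=b\,\mathbb P(N_{t}>0)\,W(t)$.

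Next I would exploit this. With $\tilde W(t):=e^{-\alpha t}W(t)$ the previous relation reads $\tilde W'(t)=\big(b\,\mathbb P(N_{t}>0)-\alpha\big)\tilde W(t)$, hence, since $\tilde W(0)=W(0)=1$,
\[
\tilde W(t)=\exp\!\Big(\int_{0}^{t}\big(b\,\mathbb P(N_{s}>0)-\alpha\big)\,ds\Big).
\]
Because $\{N_{t}>0\}$ decreases in $t$, the map $t\mapsto\mathbb P(N_{t}>0)$ is non-increasing, and \eqref{eq:probaNoCond}, \eqref{eq:intalpha} and Lemma \ref{lem: asyComp} give $\mathbb P(N_{t}>0)\to\alpha/b$; therefore $b\,\mathbb P(N_{t}>0)-\alpha\ge 0$ for all $t$. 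So $\tilde W$ is non-decreasing, $F=1/\psi'(\alpha)-\tilde W$ is non-negative and non-increasing, and Lemma \ref{lem: asyComp} forces $\tilde W(t)\uparrow 1/\psi'(\alpha)$, i.e.\ $F(t)\downarrow 0$. The càdlàg (indeed continuous) regularity of $F$ follows from continuity of $W$, itself immediate from the renewal equation. This disposes of the first half of the statement.

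For the limit I would pass to Laplace transforms. Since $F$ is bounded, $\int_{0}^{\infty}e^{-\lambda t}F(t)\,dt$ converges for $\lambda>0$ and equals, using $\int_{0}^{\infty}e^{-\mu s}W(s)\,ds=1/\psi(\mu)$ for $\mu>\alpha$,
\[
\widehat{F}(\lambda)=\frac{1}{\lambda\,\psi'(\alpha)}-\frac{1}{\psi(\lambda+\alpha)},\qquad \lambda>0 .
\]
The right-hand side has a removable singularity at $\lambda=0$ and is analytic on $\{\operatorname{Re}\lambda>-\alpha\}$, while $\psi(\lambda+\alpha)\to\psi(0)=0$ as $\lambda\to-\alpha$; since $\psi(\mu)\sim\psi'(0)\mu$ near $0$ with $\psi'(0)=1-b\,\mathbb EV$, the function $\widehat F$ has, when $\mathbb EV<\infty$, a simple pole at $\lambda=-\alpha$ of residue $-1/\psi'(0)=1/(b\,\mathbb EV-1)$, and only a milder singularity (residue $0$) when $\mathbb EV=\infty$. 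Equivalently $\int_{0}^{\infty}e^{-\lambda t}\,e^{\alpha t}F(t)\,dt$ has a simple pole at $\lambda=0$ with residue the claimed constant. I would then conclude $e^{\alpha t}F(t)\to 1/(b\,\mathbb EV-1)$ (resp.\ $0$) by a Tauberian argument, the monotonicity of $F$ from the previous step furnishing the required side condition; it is convenient to run this through the positive measure $\mathrm d(-F)$, whose Laplace transform $F(0)-\lambda\widehat F(\lambda)$ inherits the simple pole and whose tail $\int_{(t,\infty)}\mathrm d(-F)$ is precisely $F(t)$.

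The hard part is this last step. Analytic continuation of $\widehat F$ stops at the line $\operatorname{Re}\lambda=-\alpha$ — exactly where $\psi$ ceases to be defined — so a naive contour shift is unavailable; one must either verify a slow-decrease condition for $e^{\alpha t}F(t)$ from the monotonicity of $F$, or first prove the a priori estimate $F(t)=O(e^{-\alpha t})$, which is strictly stronger than the $O(e^{-\gamma t})$ of Lemma \ref{lem: asyComp}. (Landau's theorem already locates the abscissa of convergence of $\widehat F$ at the singularity $-\alpha$, hence $e^{\mu t}F(t)\to0$ for every $\mu<\alpha$; pushing to $\mu=\alpha$ is the crux.) Once such a bound is in hand the constant itself is easy: for instance the renewal equation $F=h+F_{\alpha}*F$ with $F_{\alpha}(u)=b\,e^{-\alpha u}\bar{\mathbb P}_{V}(u)$ a probability density of mean $\psi'(\alpha)/\alpha$ and $\int_{0}^{\infty}h=0$ lets one identify $\lim e^{\alpha t}F(t)$ by dominated convergence.
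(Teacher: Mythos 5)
This lemma is not proved in the paper at all: it is quoted verbatim from \cite[Prop.~5.1]{H1}, so there is no in-paper argument to set your proposal against, and I can only judge it on its own terms. The first half of what you do is correct and complete: inverting $\psi(\lambda)\widehat W(\lambda)=1$ into $W=1+b\,\bar{\mathbb P}_V* W$, differentiating to get $W'=b\bigl(W-W\star\mathbb P_V\bigr)=b\,\mathbb E N_\cdot=b\,\mathbb P(N_\cdot>0)\,W$ via \eqref{eq:NtNoCond}--\eqref{eq:momNt}, and combining the monotonicity of $t\mapsto\mathbb P(N_t>0)$ with its limit $\alpha/b$ (which you rightly extract from \eqref{eq:probaNoCond}, \eqref{eq:intalpha} and Lemma~\ref{lem: asyComp}, avoiding circularity with \eqref{eq:Prnonex}) does give $e^{-\alpha t}W(t)\uparrow 1/\psi'(\alpha)$, hence $F\ge0$, non-increasing, continuous, and $F\downarrow0$. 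Your Laplace-transform bookkeeping is also right: $\widehat F(\lambda)=\frac1{\lambda\psi'(\alpha)}-\frac1{\psi(\lambda+\alpha)}$ has a simple pole at $-\alpha$ of residue $-1/\psi'(0)=1/(b\mathbb EV-1)$, which correctly predicts the constant.

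The genuine gap is exactly where you flag it: the existence of $\lim_t e^{\alpha t}F(t)$ is never derived, and neither of your two proposed exits closes it. For the Tauberian route, monotonicity of $F$ only controls $e^{\mu t}F(t)$ for $\mu<\alpha$ (your Landau argument); at $\mu=\alpha$ one needs a slow-decrease property of $e^{\alpha t}F(t)$ itself, which does not follow from monotonicity of $F$, and a Wiener--Ikehara-type argument would additionally require control of $1/\psi(\lambda+\alpha)$ along the whole boundary line $\mathrm{Re}\,\lambda=-\alpha$, not just near the pole. For the renewal route, the equation $F=h+F_\alpha*F$ becomes, after multiplication by $e^{\alpha t}$, a renewal equation with kernel $b\bar{\mathbb P}_V(u)\,du$ of total mass $b\mathbb EV>1$: it is \emph{excessive}, not defective, so neither Theorem~\ref{thm:nonLatticeRen} nor the tilting of Remark~\ref{rem:1} applies to it, and dominated convergence in that equation only identifies the constant \emph{assuming} the limit exists. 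Indeed, with $L=\limsup e^{\alpha t}F(t)$ and $l=\liminf$, Fatou and reverse Fatou give $l\le\frac1{b\mathbb EV-1}\le L$ --- the wrong direction to force $l=L$; a priori boundedness $F(t)=\mathcal O(e^{-\alpha t})$ would not repair this. So the second display of the lemma, which is its main quantitative content, is not proved. A route that does work is to recognize $F(t)=\frac1{\psi'(\alpha)}-W_{\psi_\alpha}(t)$ as (a multiple of) the ruin probability $\mathbb P(-\underline X^{(\alpha)}_\infty>t)$ of the Esscher-tilted contour process, whose tail satisfies a genuinely \emph{defective} renewal equation with adjustment coefficient $\alpha$, to which the Cram\'er--Lundberg/key-renewal machinery of the appendix then applies.
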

 From this Lemma and \eqref{eq:probaNoCond}, one can easily deduce that
 \begin{equation}
 \label{eq:Prnonex}
 \mathbb{P}\left(\text{NonEx} \right)=\lim\limits_{t\to\infty}\mathbb{P}\left(N_{t}>0 \right)=\frac{\alpha}{b},
 \end{equation}
 where $\text{NonEx}$ refer to the non-extinction event.

In \cite{CH}, we show that a CPP stopped at time $t$ with scale function $W$ can be constructed by grafting independent CPP stopped at a fixed time $a\leq t$ on a CPP stopped at time $t-a$ with an explicit scale function different of $W$ (see Figure \ref{fig : coalpointproc}).
 \begin{figure}[ht]
 \unitlength 1.6mm 
 \linethickness{0.9pt}
 \begin{picture}(100,30)(0,0)
 \put(1,-1){\makebox{\small{t}}}
 \put(-1.5,9.5){\makebox{\small{t-a}}}
 \put(3,10){\line(1,0){1}}
 \put(3.2,0){\line(1,0){1}}
 \put(3,0){
 \color{black}
 \put(0,10){\line(1,0){100}}
 \put(0,10){\line(0,-1){10}}
 \put(50,10){\line(0,-1){7}}
 \put(25,10){\line(0,-1){3}}
 \put(75,10){\line(0,-1){5}}
 \multiput(25,7)(-1,0){25}{\line(-1,0){0.5}}
 \multiput(50,3)(-1,0){50}{\line(-1,0){0.5}}
 \multiput(75,5)(-1,0){25}{\line(-1,0){0.5}}
 \setlength{\unitlength}{0.8mm}
 \linethickness{0.62pt}
 \put(0,20){
 \put(13,22){\makebox{$\mathcal{P}^{(1)}$}}
 \color{gray4}
 \put(0,0){\line(0,1){20}}
 \color{gray4}
 \put(0,20){\line(1,0){10}}
 \color{gray1}
 \put(10,20){\line(1,0){10}}
 \color{gray2}
 \put(20,20){\line(1,0){10}}
 \color{gray3}
 \multiput(30,20)(1,0){10}{\line(-1,0){0.5}}
 \color{black}
 \color{gray1}
 \put(20,20){\line(0,-1){5}}
 \color{gray1}
 \put(10,20){\line(0,-1){10}}
 \color{gray3}
 \put(30,20){\line(0,-1){15}}
 \multiput(30,5)(-1,0){30}{\line(-1,0){0.5}}
 \multiput(20,15)(-1,0){10}{\line(-1,0){0.5}}
 \multiput(10,10)(-1,0){10}{\line(-1,0){0.5}}
 }
 \put(50,20){
 \put(13,22){\makebox{$\mathcal{P}^{(2)}$}}
 \color{gray4}
 
 \put(0,0){\line(0,1){20}}
 \color{gray4}
 \put(0,20){\line(1,0){10}}
 \color{gray1}
 \put(10,20){\line(1,0){10}}
 \color{gray2}
 \put(20,20){\line(1,0){10}}
 \color{gray3}
 \multiput(30,20)(1,0){10}{\line(-1,0){0.5}}
 \color{black}
 \color{gray1}
 \put(20,20){\line(0,-1){5}}
 \color{gray1}
 \put(10,20){\line(0,-1){10}}
 \color{gray3}
 \put(30,20){\line(0,-1){15}}
 \multiput(30,5)(-1,0){30}{\line(-1,0){0.5}}
 \multiput(20,15)(-1,0){10}{\line(-1,0){0.5}}
 \multiput(10,10)(-1,0){10}{\line(-1,0){0.5}}}
 \put(100,20){\color{gray4}
 \put(0,0){\line(0,1){20}}
 \put(13,22){\makebox{$\mathcal{P}^{(3)}$}}
 \color{gray4}
 \put(0,20){\line(1,0){10}}
 \color{gray1}
 \put(10,20){\line(1,0){10}}
 \color{gray2}
 \put(20,20){\line(1,0){10}}
 \color{gray3}
 \multiput(30,20)(1,0){10}{\line(-1,0){0.5}}
 \color{black}
 \color{gray1}
 \put(20,20){\line(0,-1){5}}
 \color{gray1}
 \put(10,20){\line(0,-1){10}}
 \color{gray3}
 \put(30,20){\line(0,-1){15}}
 \multiput(30,5)(-1,0){30}{\line(-1,0){0.5}}
 \multiput(20,15)(-1,0){10}{\line(-1,0){0.5}}
 \multiput(10,10)(-1,0){10}{\line(-1,0){0.5}}}
 \put(150,20){\color{gray4}
 \put(0,0){\line(0,1){20}}
 \put(13,22){\makebox{$\mathcal{P}^{(4)}$}}
 \color{gray4}
 \put(0,20){\line(1,0){10}}
 \color{gray1}
 \put(10,20){\line(1,0){10}}
 \color{gray2}
 \put(20,20){\line(1,0){10}}
 \color{gray3}
 \multiput(30,20)(1,0){10}{\line(-1,0){0.5}}
 \color{black}
 \color{gray1}
 \put(20,20){\line(0,-1){5}}
 \color{gray1}
 \put(10,20){\line(0,-1){10}}
 \color{gray3}
 \put(30,20){\line(0,-1){15}}
 \multiput(30,5)(-1,0){30}{\line(-1,0){0.5}}
 \multiput(20,15)(-1,0){10}{\line(-1,0){0.5}}
 \multiput(10,10)(-1,0){10}{\line(-1,0){0.5}}}
 }
 \end{picture}
 \caption{
 Adjunction of trees.}
 \label{fig : coalpointproc}
 \end{figure}
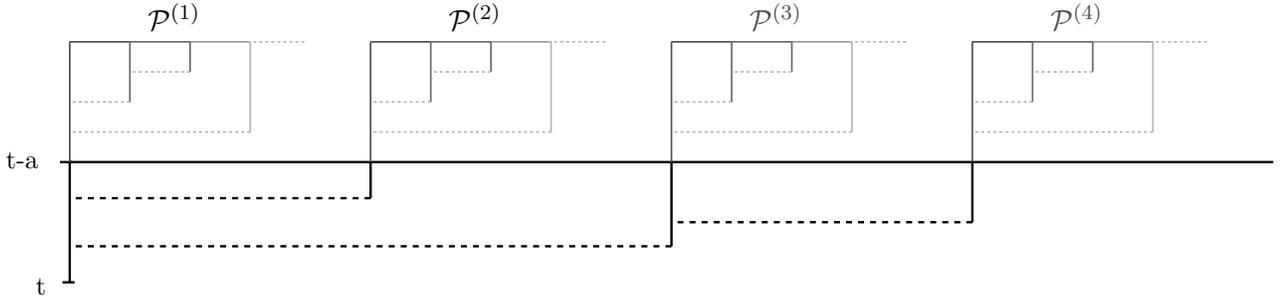
 Moreover, we showed that the frequency spectrum can be expressed as an integral with respect to the random measure $\mathcal{N}$ along the CPP, that is
 \begin{equation}
 \label{eq:recFormula}
 \prod_{i=1}^{l}A(k_{i},t)=\sum_{i=1}^{l}\int_{[0,t]\times \mathbb{N}}\mathds{1}_{Z^{(u)}_{0}(a)=k_{i}}\sum_{u_{1:l-1}=1}^{N^{(t)}_{t-a}}\prod_{\underset{i\neq j}{j=1}}^{l-1}A^{(u_{j})}(k_{j},a)\ \mathcal{N}\left(da,du \right),
 \end{equation}
 where $A^{(u)}(k,a)$ (resp. $Z^{(u)}_{0}$) refers to the frequency spectrum (resp. clonal family) of the $u$th grafted sub-CPP, and $\sum_{u_{1:l-1}=1}^{N^{(t)}_{t-a}}$ denotes for the multi-sum
 
 \[
 \sum_{u_{1}=1}^{N^{(t)}_{t-a}}\dots \sum_{u_{l-1}=1}^{N^{(t)}_{t-a}}.
 \]
 Moreover, in \cite[Thm, 3.1]{CH} we show that the expectation of such integral can be computed easily when the integrand presents local independence properties with the random measure as in formula \eqref{eq:recFormula}.
  Equation \eqref{eq:recFormula} is used later to obtain some moments estimates useful to prove our theorems.
 In particular, this allows to prove that (see \cite{CH}) for any positive integer $k$ and $l$,
 \begin{equation}
 \label{eq:Mom1akate}
 \mathbb{E}_{t}A(k,t)=W(t)\int_{0}^{t}\frac{\theta e^{-\theta s}}{W_{\theta}(s)^{2}}\left(1-\frac{1}{W_{\theta}(s)} \right)^{k-1}ds,
 \end{equation}
 and
 \begin{align}
 \label{eq:Mom2akate}
 \mathbb{E}A(k,t)A(l,t)&=2W(t)^{2}\int_{0}^{t}\frac{\theta e^{-\theta s}}{W_{\theta}(s)^{2}}\left(1-\frac{1}{W_{\theta}(s)} \right)^{k-1}ds\int_{0}^{t}\frac{\theta e^{-\theta s}}{W_{\theta}(s)^{2}}\left(1-\frac{1}{W_{\theta}(s)} \right)^{l-1}ds\notag\\
 &-W(t)\int_{0}^{t}2\theta \ \frac{e^{-\theta a}W(a)}{W_{\theta}(a)^{2}}\left(1-\frac{1}{W_{\theta}(a)} \right)^{l-1}\int_{0}^{s}\frac{\theta e^{-\theta s}}{W_{\theta}(s)^{2}}\left(1-\frac{1}{W_{\theta}(a)} \right)^{k-1}dsda\notag\\
 &-W(t)\int_{0}^{t}2\theta \ \frac{e^{-\theta a}W(a)}{W_{\theta}(a)^{2}}\left(1-\frac{1}{W_{\theta}(a)} \right)^{k-1}\int_{0}^{s}\frac{\theta e^{-\theta s}}{W_{\theta}(s)^{2}}\left(1-\frac{1}{W_{\theta}(a)} \right)^{l-1}dsda\notag\\
 &+W(t)\mathbb{E}\int_{0}^{t}\theta W(a)^{-1}\left(\mathbb{E}\left[A(k,t)\mathds{1}_{Z_{0}(a)=l} \right]+\mathbb{E}\left[A(l,t)\mathds{1}_{Z_{0}(a)=k} \right]\right)da\notag\\
 &+\mathds{1}_{l=k}W(t)\int_{0}^{t}\frac{\theta e^{-\theta s}}{W_{\theta}(s)^{2}}\left(1-\frac{1}{W_{\theta}(s)} \right)^{k-1}ds.
 \end{align}

 These tools also allow, for instance, to prove next two results \cite{L10,CLR,CH}.
 \begin{thm}
 \label{thm:ASCVNt}
 There exists a random variable $\mathcal{E}$, such that
 \[
 \lim\limits_{t\to\infty}e^{-\alpha t}N_{t}=\frac{\mathcal{E}}{\psi'(\alpha)},\quad a.s. \text{ and in } L^{2}.
 \]
 Moreover, under $\mathbb{P}_{\infty}$, $\mathcal{E}$ is exponentially distributed with parameter one.
 \end{thm}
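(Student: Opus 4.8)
The plan is to prove the statement in three stages: $L^{2}$ convergence of $e^{-\alpha t}N_{t}$ to a limit we call $\mathcal E/\psi'(\alpha)$, the upgrade to almost sure convergence, and the identification of the law of $\mathcal E$ under $\mathbb P_{\infty}$. For the $L^{2}$ part I would start from the fact, recalled via \eqref{eq:loiNt}, that $N_{t}$ is geometric with parameter $1/W(t)$ under $\mathbb P_{t}$, so that $\mathbb E_{t}[N_{t}]=W(t)$ and $\mathbb E_{t}[N_{t}^{2}]=2W(t)^{2}-W(t)$; combined with \eqref{eq:probaNoCond} and Lemma~\ref{lem: asyComp}, this gives $e^{-2\alpha t}\mathbb E[N_{t}^{2}]\to 2\alpha/(b\,\psi'(\alpha)^{2})$, with an error of order $e^{-\gamma t}$ inherited from the remainder in Lemma~\ref{lem: asyComp}. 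The mixed moment $\mathbb E[N_{s}N_{t}]$ for $s\le t$ I would compute from the grafting decomposition of the CPP at level $t-s$ (Figure~\ref{fig : coalpointproc}) together with the integration-along-the-CPP identity of \cite{CH} underlying \eqref{eq:Mom1akate}--\eqref{eq:Mom2akate}, applied to the trivial characteristic that counts the whole population; equivalently, one conditions on the tree up to time $s$ and sums the numbers of time-$t$ descendants over the $N_{s}$ individuals alive at time $s$. This yields $e^{-\alpha(s+t)}\mathbb E[N_{s}N_{t}]\to 2\alpha/(b\,\psi'(\alpha)^{2})$ with an $\mathcal O(e^{-\gamma(s\wedge t)})$ error, whence $\mathbb E[(e^{-\alpha t}N_{t}-e^{-\alpha s}N_{s})^{2}]=\mathcal O(e^{-\gamma(s\wedge t)})$: the family $(e^{-\alpha t}N_{t})_{t\ge0}$ is Cauchy in $L^{2}$, and I denote its limit by $\mathcal E/\psi'(\alpha)$.

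The almost sure convergence does not follow formally from the $L^{2}$ statement, and this is the step I expect to be the main obstacle. The cleanest route is to invoke the strong law of large numbers for Crump--Mode--Jagers processes counted by random characteristics, applied to the constant characteristic: since the reproduction point process is here a Poisson process killed at the lifetime $V$, the $x\log x$-type integrability hypothesis holds whenever $\mathbb E V<\infty$, while the case $\mathbb E V=\infty$ --- where the constant in Lemma~\ref{lem:WComp} degenerates to $0$ --- can be recovered by truncating the lifetimes and letting the cutoff grow, using monotonicity; this is in substance the argument of \cite{CLR,CH}, to which one can refer. A more hands-on alternative combines Borel--Cantelli along a geometric subsequence $t_{n}=n\delta$ (the $\mathcal O(e^{-\gamma(s\wedge t)})$ bound above makes $\sum_{n}\mathbb E[(e^{-\alpha t_{n+1}}N_{t_{n+1}}-e^{-\alpha t_{n}}N_{t_{n}})^{2}]<\infty$, giving a.s.\ convergence of $e^{-\alpha t_{n}}N_{t_{n}}$) with a control of $\sup_{t\in[t_{n},t_{n+1})}e^{-\alpha t}|N_{t}-N_{t_{n}}|$; in such a window $N$ can only gain the births $B_{n}$ and lose the deaths $D_{n}$, and while $B_{n}$ is easily handled by a Chernoff bound together with $N_{t_{n}}=\mathcal O(e^{\alpha t_{n}})$, controlling $D_{n}$ amounts to controlling the residual-lifetime profile of the individuals alive at $t_{n}$ --- essentially another instance of the same Crump--Mode--Jagers-type convergence --- which is exactly where the non-Markovian nature of the model is felt. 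Either way $e^{-\alpha t}N_{t}\to\mathcal E/\psi'(\alpha)$ a.s., and, being the a.s.\ limit of an $L^{2}$-bounded family, the limit is square-integrable.

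It remains to identify the law of $\mathcal E$ under $\mathbb P_{\infty}$. Under $\mathbb P_{t}$, the variable $p_{t}N_{t}$ with $p_{t}:=1/W(t)\to 0$ converges in distribution to an exponential of parameter one, and since $e^{-\alpha t}/p_{t}=e^{-\alpha t}W(t)\to 1/\psi'(\alpha)$ by Lemma~\ref{lem: asyComp}, Slutsky's lemma yields $\psi'(\alpha)e^{-\alpha t}N_{t}\Rightarrow\mathrm{Exp}(1)$ under $\mathbb P_{t}$. To transfer this to $\mathbb P_{\infty}$, I would use that $\mathrm{NonEx}\subseteq\{N_{t}>0\}$ and, by \eqref{eq:probaNoCond}--\eqref{eq:Prnonex}, $\mathbb P(N_{t}>0)\downarrow\alpha/b>0$, so that $\sup_{B}|\mathbb P_{t}(B)-\mathbb P_{\infty}(B)|\le C\,(\mathbb P(N_{t}>0)-\alpha/b)\to 0$ uniformly over events $B$. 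Applying this with $B=\{\psi'(\alpha)e^{-\alpha t}N_{t}\le y\}$ and using the almost sure convergence $\psi'(\alpha)e^{-\alpha t}N_{t}\to\mathcal E$ under $\mathbb P_{\infty}$ established above (hence convergence in law), we obtain $\mathbb P_{\infty}(\mathcal E\le y)=1-e^{-y}$ for every $y>0$, i.e.\ $\mathcal E$ is exponentially distributed with parameter one under $\mathbb P_{\infty}$.
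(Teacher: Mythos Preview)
The paper does not actually prove Theorem~\ref{thm:ASCVNt}; it is quoted as a known background result, with the one-line justification ``These tools also allow, for instance, to prove next two results \cite{L10,CLR,CH}.'' So there is no in-paper proof to compare against.

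Your sketch is essentially correct and in line with the literature the paper cites. A few comments. For the $L^{2}$ part, the second-moment computation is fine; the mixed moment $\mathbb E[N_{s}N_{t}]$ can indeed be obtained from the decomposition at level $s$ (this is exactly how \cite{H1} derives the renewal equation recalled here as \eqref{eq:renewNt}, and the explicit value \eqref{eq:eqeq} for $\mathbb E[N_{t}\mathcal E]$ confirms the limit you need). For the almost sure part, you correctly identify the issue: the ``hands-on'' Borel--Cantelli route requires controlling the deaths on $[t_{n},t_{n+1})$, which amounts to a law of large numbers for the residual-lifetime empirical measure and is therefore circular; the clean argument is precisely the CMJ strong law invoked in \cite{CLR,CH}, so your first route is the right one and the second should be dropped rather than presented as an alternative. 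For the identification of the law, your transfer from $\mathbb P_{t}$ to $\mathbb P_{\infty}$ via the inclusion $\mathrm{NonEx}\subset\{N_{t}>0\}$ and $\mathbb P(N_{t}>0)\downarrow\alpha/b$ is correct; this is the standard argument (see e.g.\ \cite{L10}).
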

 \begin{thm}
 \label{thm:ASCVakate}
 For any positive integer $k$,
 \[
 \lim\limits_{t\to\infty}e^{-\alpha t}A(k,t)=\frac{c_{k}\mathcal{E}}{\psi'(\alpha)},\quad a.s. \text{ and in } L^{2},
 \]
 where $\mathcal{E}$ is the random variable of the Theorem \ref{thm:ASCVNt} and 
 \begin{equation}
 \label{eq:ck}
 c_{k}=\int_{0}^{\infty}\frac{\theta e^{-\theta a}}{W_{\theta}(a)}\left( 1-\frac{1}{W_{\theta}(a)}\right)^{k-1}da.
 \end{equation}
 \end{thm}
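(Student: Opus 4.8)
The approach is to recognise $(A(k,t))_{t\ge 0}$ as a general branching process counted by a random characteristic, apply the strong law of large numbers for such processes, and then pin down the constant using the first‑moment asymptotics already available. Concretely, attach to each individual $x$ of the splitting tree the characteristic $\chi_{k}(x,u)$ equal to the number of mutations falling on the proper lineage of $x$ during $[0,u\wedge V_{x}]$ whose clonal progeny (the subfamily of $x$'s descendants sharing that mutation's type, pruned at further mutations) has exactly $k$ members alive when $x$ has age $u$; then $A(k,t)=\sum_{x}\chi_{k}(x,t-\sigma_{x})$, with $\sigma_{x}$ the birth time of $x$. This is a characteristic depending on the descendant subtrees, so the setting is that of Jagers--Nerman. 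The reproduction of each individual being a Poisson process of rate $b$, the relevant moment/integrability conditions hold (in particular $\int_{0}^{\infty}e^{-\alpha u}b\,\PP(V>u)\,du=1$ by \eqref{eq:intalpha}, so $\alpha$ is the Malthusian parameter, and $\EE\chi_{k}(\cdot,u)\le\theta u$ gives the Malthusian finiteness of the characteristic); see \cite{CL1,CLR}. The strong law then gives, almost surely,
\[
e^{-\alpha t}A(k,t)\xrightarrow[t\to\infty]{}\kappa_{k}\,W_{\infty},
\]
where $W_{\infty}$ is the limit of the intrinsic martingale of the tree --- an object that does not depend on the characteristic. Applying the same statement to $u\mapsto\mathds{1}_{u<V}$ yields $e^{-\alpha t}N_{t}\to\kappa_{N}W_{\infty}$, and Theorem~\ref{thm:ASCVNt} identifies this limit with $\mathcal{E}/\psi'(\alpha)$; hence $W_{\infty}=\mathcal{E}/(\kappa_{N}\psi'(\alpha))$ and $e^{-\alpha t}A(k,t)\to(\kappa_{k}/\kappa_{N})\,\mathcal{E}/\psi'(\alpha)$.

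To identify $\kappa_{k}/\kappa_{N}$ with $c_{k}$, rather than computing the renewal integral $\int_{0}^{\infty}e^{-\alpha u}\EE\chi_{k}(\cdot,u)\,du$ directly (which drags in a residual‑lifetime term), I would match expectations. By \eqref{eq:Mom1akate}, $e^{-\alpha t}W(t)\to 1/\psi'(\alpha)$ (Lemma~\ref{lem: asyComp}) and the regime‑by‑regime control of $W_{\theta}$ in Lemma~\ref{lem: asyComp}, one gets $e^{-\alpha t}\EE_{t}A(k,t)\to\psi'(\alpha)^{-1}\int_{0}^{\infty}\tfrac{\theta e^{-\theta s}}{W_{\theta}(s)^{2}}\bigl(1-\tfrac1{W_{\theta}(s)}\bigr)^{k-1}\,ds=c_{k}/\psi'(\alpha)$; together with \eqref{eq:Prnonex} and $\EE A(k,t)=\PP(N_{t}>0)\EE_{t}A(k,t)$ this gives $e^{-\alpha t}\EE A(k,t)\to(\alpha/b)c_{k}/\psi'(\alpha)$. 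Since the convergence above is also in $L^{1}$ (uniform integrability being ensured by the $L^2$ bound below), the left‑hand side tends to $(\kappa_{k}/\kappa_{N})\EE\mathcal{E}/\psi'(\alpha)$, while $\EE\mathcal{E}=\alpha/b$ (read off from $\EE N_{t}=W(t)-W\star\PP_{V}(t)$ via \eqref{eq:NtNoCond} and \eqref{eq:intalpha}); comparing, $\kappa_{k}/\kappa_{N}=c_{k}$. The $L^{2}$ convergence follows from the uniform bound $\sup_{t}\EE\bigl[(e^{-\alpha t}A(k,t))^{2}\bigr]<\infty$ --- read off from \eqref{eq:Mom2akate}, whose dominant terms are of order $e^{2\alpha t}$ --- together with the $L^2$‑version of the strong law (applicable here, the reproduction having finite moments of all orders).

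The main obstacle is the structural first step: one must verify carefully that $(A(k,t))_{t}$ really fits the Jagers--Nerman scheme --- in particular that $\chi_{k}$, although built from the entire subtree rooted at $x$, is a jointly measurable (indeed c\`adl\`ag) random characteristic, and that the separability and integrability conditions of the strong law hold in full generality, including when $\EE V=\infty$. This is standard but not cost‑free and is where \cite{CL1,CLR} do the real work. A completely self‑contained alternative is the route of \cite{CH}: prove the $L^{2}$ convergence directly from \eqref{eq:Mom1akate}--\eqref{eq:Mom2akate} (showing $(e^{-\alpha t}A(k,t))_{t}$ is $L^{2}$‑Cauchy with the correct mean, using the adjunction formula \eqref{eq:recFormula} to reach unequal‑time second moments), then pass to almost sure convergence along an arithmetic subsequence by Borel--Cantelli, filling the gaps using that $A(k,\cdot)$ jumps by $O(1)$ at each event of the underlying birth/death/mutation point processes; there the heavy lifting is the bookkeeping in the five‑term formula \eqref{eq:Mom2akate}.
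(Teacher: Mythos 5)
The paper does not prove Theorem \ref{thm:ASCVakate} itself: it is quoted from the literature, and the introduction points to exactly the two arguments you sketch --- the Jagers--Nerman random-characteristics route (combining \cite{CL1} and \cite{Rich}, assembled in \cite{CLR}) as the main proof, and the direct moment/$L^{2}$-Cauchy route of \cite{CH} as the alternative. Your outline is correct and coincides with those references; note only that the normalisation you use for the constant, $c_{k}=\int_{0}^{\infty}\theta e^{-\theta a}W_{\theta}(a)^{-2}\left(1-1/W_{\theta}(a)\right)^{k-1}da$, is the one consistent with \eqref{eq:Mom1akate} and \eqref{eq:ckt}, so the first power of $W_{\theta}$ displayed in \eqref{eq:ck} appears to be a typo in the statement rather than an error on your side.
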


\section{Main results}
\label{sec:results}
 
The a.s.\ convergence stated in Section \ref{sec:models} suggests studying the second order properties of the convergence to get central limit theorem. Our main result, Theorem \ref{thm:cltFin}, allows to study the asymptotic error in the approximation $2$ proposed in the introduction of this work. In addition, we prove more standard central limit theorems which are interesting from the theoretical point of view.

Before going further, we recall that the Laplace distribution with mean $\mu\in\mathbb{R}^{n}$ and covariance matrix $K$ is the probability distribution whose characteristic function is given, for all $\lambda\in\mathbb{R}^{n}$ by
\[
\frac{1}{1+\frac{1}{2}\lambda'K\lambda-i\mu'\lambda}
\]
We denote this law by $\mathcal{L}\left(\mu,K \right)$. We also recall that, if $G$ is a Gaussian random vector with mean $\mu$ and covariance matrix $K$ and $\mathcal{E}$ is an exponential random variable with parameter $1$ independent of $G$, then $\sqrt{\mathcal{E}}G$ is Laplace $\mathcal{L}\left(\mu,K \right)$.

\subsection{CLT for the convergence of Theorem \ref{thm:ASCVakate}}

\begin{thm}
\label{thm:tclAllelic}
Suppose that $\theta >\alpha$ and $\int_{[0,\infty)}e^{\left(\theta-\alpha\right)v}\mathbb{P}_{V}(dv)>1$ . Then, we have, under $\mathbb{P}_{\infty}$,
\[
\left(e^{\alpha\frac{t}{2}}\left(\psi'(\alpha)A(k,t)-e^{\alpha t}c_{k}\mathcal{E}\right)\right)_{k\in \mathbb{N}}\xrightarrow[t\to\infty]{(d)}\mathcal{L}\left(0,K\right),
\]
where $K$ is some covariance matrix and the constants $c_{k}$ are defined in \eqref{eq:ck}.
\end{thm}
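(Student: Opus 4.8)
The plan is to run, for the random characteristic counting clonal families of a prescribed size, the method \cite{H1} developed for $N_t$; the two extra ingredients are the explicit moment formulas \eqref{eq:Mom1akate}--\eqref{eq:Mom2akate} and the sharp scale--function asymptotics of Lemmas \ref{lem: asyComp} and \ref{lem:WComp}. Work under $\mathbb P_\infty$ and, for $k\in\mathbb N$, set
\[
Y_t(k):=e^{-\alpha t/2}\bigl(\psi'(\alpha)A(k,t)-e^{\alpha t}c_k\mathcal E\bigr),
\]
so the claim is $(Y_t(k))_k\xrightarrow[t\to\infty]{(d)}\mathcal L(0,K)$. By the Cram\'er--Wold device it suffices to prove that, for any $k_1,\dots,k_n$ and $\lambda\in\mathbb R^n$, the scalar $\Xi_t:=\sum_j\lambda_jY_t(k_j)$ converges to a one--dimensional Laplace law, the matrix $K$ being read off from the resulting quadratic form. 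The argument has three parts: an $L^2$ estimate; a regenerative decomposition of the CPP; and a conditional central limit theorem whose random number of summands, suitably normalised, converges to a multiple of $\mathcal E$, thereby producing the $\sqrt{\mathcal E}$--mixing that turns a Gaussian limit into a Laplace one.

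\textbf{Step 1: second moments.} Using \eqref{eq:Mom1akate}, \eqref{eq:Mom2akate}, the identity $\mathcal E=\lim_u\psi'(\alpha)e^{-\alpha u}N_u$ of Theorem \ref{thm:ASCVNt} — so that joint moments of the $A(k,t)$ with $\mathcal E$ are limits of joint moments of the $A(k,t)$ with $N_u$, computed by the same CPP calculus that produced \eqref{eq:Mom2akate} — together with the expansions of Lemmas \ref{lem: asyComp}--\ref{lem:WComp}, one shows
\[
\mathbb E[Y_t(k)]\xrightarrow[t\to\infty]{}0,\qquad \mathbb E[Y_t(k)Y_t(l)]\xrightarrow[t\to\infty]{}K_{k,l}
\]
for an explicit matrix $K$. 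The decisive feature is an exact cancellation: the leading $e^{2\alpha t}$ contributions to $\mathbb E[A(k,t)A(l,t)]$, $\mathbb E[A(k,t)N_u]$ and $\mathbb E[N_uN_v]$ match once $e^{\alpha t}c_k\mathcal E$ is subtracted, so that $\psi'(\alpha)A(k,t)-e^{\alpha t}c_k\mathcal E$ has variance of order $e^{\alpha t}$ rather than $e^{2\alpha t}$. Identifying the surviving $\mathcal O(e^{\alpha t})$ term — in particular the mixed quantity $\mathbb E[A(k,t)\mathds{1}_{Z_0(a)=l}]$ in \eqref{eq:Mom2akate} — is exactly where the hypotheses $\theta>\alpha$ and $\int_{[0,\infty)}e^{(\theta-\alpha)v}\mathbb P_V(dv)>1$ enter: they force the associated renewal/convolution equations to have solutions of exponential order $e^{\alpha t}$ and no larger, which is what makes $e^{-\alpha t/2}$ the correct normalisation. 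In particular $(Y_t(k))_t$ is bounded in $L^2$, hence tight, and $\{Y_t(k)Y_t(l)\}_t$ is uniformly integrable.

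\textbf{Steps 2--3: decomposition and conditional CLT.} Fix $s\in(0,t)$ and decompose the CPP of height $t$ by the adjunction of \cite{CH} recalled around Figure \ref{fig : coalpointproc}: it is built by grafting independent sub--CPPs $\mathcal P^{(1)},\mathcal P^{(2)},\dots$ of height $t-s$, each carrying its own mutation measure and distributed as the CPP at time $t-s$, onto a base CPP of height $s$ whose scale function is an explicit modification of $W$ absorbing the residual--lifetime effect. Together with \eqref{eq:recFormula} at $l=1$ this gives $A(k,t)=\sum_{i=1}^{M_s}A^{(i)}(k,t-s)+R_s(k,t)$, with, conditionally on the base, the $A^{(i)}(k,\cdot)$ i.i.d., $M_s$ the number of branches of the base, and $R_s(k,t)$ the number of clonal families whose founding mutation lies on the base (hence has age at least $t-s$); since $\theta>\alpha$ these are subcritical and, the base carrying only finitely many mutations, $R_s(k,t)\to0$ a.s. as $t\to\infty$ for fixed $s$ and $k\geq1$. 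Carrying out on $N_t$ the matching decomposition and centring each summand by the limit of Theorems \ref{thm:ASCVNt}--\ref{thm:ASCVakate}, one gets $\Xi_t=e^{-\alpha s/2}\sum_{i=1}^{M_s}\Xi^{(i)}_{t-s}+o_{\mathbb P}(1)$, with $(\Xi^{(i)}_\cdot)_i$ conditionally i.i.d. copies of $\Xi_\cdot$. By the tightness and uniform integrability of Step 1, along any subsequence $\Xi_t\to\Xi_\infty$, and passing to the limit (the remainder being negligible) yields the distributional fixed--point equation $\Xi_\infty\overset{d}{=}e^{-\alpha s/2}\sum_{i=1}^{M_s}\Xi^{(i)}_\infty$, valid for every $s$, where the $\Xi^{(i)}_\infty$ are i.i.d. copies of $\Xi_\infty$ with $\mathbb E\Xi_\infty=0$ and $\mathrm{Var}\,\Xi_\infty=\lambda'K\lambda$. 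Letting now $s\to\infty$, $M_s\to\infty$ and $e^{-\alpha s}M_s$ converges to a positive multiple of $\mathcal E$, so a CLT for i.i.d. sums with a random (and independent) number of terms gives $e^{-\alpha s/2}\sum_{i=1}^{M_s}\Xi^{(i)}_\infty=(e^{-\alpha s}M_s)^{1/2}M_s^{-1/2}\sum_{i=1}^{M_s}\Xi^{(i)}_\infty\Rightarrow\sqrt{\mathcal E}\,\mathcal G$ with $\mathcal G$ Gaussian independent of $\mathcal E$. As $\mathcal E$ is exponential with parameter one under $\mathbb P_\infty$, $\sqrt{\mathcal E}\,\mathcal G$ is a one--dimensional Laplace variable, whose variance (by Step 1) equals $\lambda'K\lambda$; this determines $\Xi_\infty$ uniquely, so the whole sequence converges, and since $(k_1,\dots,k_n)$ and $\lambda$ were arbitrary, $(Y_t(k))_k\xrightarrow{(d)}\mathcal L(0,K)$.

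\textbf{Main obstacle.} The heart of the matter is Step 1: after the exact cancellation of the $e^{2\alpha t}$ terms, extracting the second--order term from the heavy expression \eqref{eq:Mom2akate} and from the analogous cross--moments with $N_u$, the delicate piece being the estimate of $\mathbb E[A(k,t)\mathds{1}_{Z_0(a)=l}]$, for which $\int_{[0,\infty)}e^{(\theta-\alpha)v}\mathbb P_V(dv)>1$ is the quantitative input that keeps this contribution at order $e^{\alpha t}$. A secondary but non--trivial point is the bookkeeping in Steps 2--3 needed to justify interchanging the limits $t\to\infty$ and $s\to\infty$ and the convergence of the second moments, which again rests on the $L^2$ bounds of Step 1.
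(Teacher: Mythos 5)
Your overall strategy --- second-moment cancellation at rate $e^{\alpha t}$, decomposition of the population at an intermediate time into conditionally independent subpopulations, and a random-index CLT in which the geometrically/exponentially distributed number of summands produces the $\sqrt{\mathcal E}$-mixing --- is essentially the paper's, the main cosmetic difference being that the paper takes the intermediate time $u=\beta t$ with $\beta\in\left(0,\tfrac12\wedge(1-\tfrac{\alpha}{\theta})\right)$ in a single limit and computes the characteristic function directly, rather than your iterated limits ($t\to\infty$ for fixed $s$, then $s\to\infty$) through a distributional fixed-point equation. There are, however, two genuine gaps. First, Step 1 asserts that $\{Y_t(k)Y_t(l)\}_t$ is uniformly integrable as a consequence of the $L^2$ bounds; it is not, since $L^2$-boundedness of $Y_t$ only gives $L^1$-boundedness of the products. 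Uniform integrability of the squares --- equivalently the Lyapunov/Lindeberg condition for your final triangular array, in which the law of the summands changes with $s$ and the number of summands is only a.s.\ finite for each fixed $s$ --- requires uniform third-moment bounds, and the paper has to work for these: Lemmas \ref{lem:aktBound}, \ref{lem: thridBounded} and \ref{lem:bdgen}, together with the two appendix sections computing and bounding the fourth moment of $A(k,t)-c_kN_t$, exist precisely because the second-moment computation does not suffice. Without this input neither your passage to the fixed-point equation nor the final CLT as $s\to\infty$ is justified.

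Second, Step 1 claims the limiting covariance is obtained explicitly ``by the same CPP calculus that produced \eqref{eq:Mom2akate}''. Those formulas give single-time moments, whereas the cross moment $\mathbb E[A(k,t)\mathcal E]=\lim_s\mathbb E[A(k,t)N_s/W(s)]$ involves two times; the paper obtains it through a renewal equation (Lemma \ref{lem:jointMomAk}) whose forcing term contains the non-explicit process $X_t$ of mutations sitting on the ancestral branch. This is both why the paper cannot compute $K$ explicitly outside the Markovian case (see the remark following the theorem) and the exact place where the hypothesis $\int_{[0,\infty)}e^{(\theta-\alpha)v}\mathbb P_V(dv)>1$ is used, namely to prove direct Riemann integrability of the forcing terms and hence convergence of $\mathbb E[N_t\mathcal E]c_k-\mathbb E[A(k,t)\mathcal E]$ (Lemma \ref{lem:akegnt}); your attribution of the hypothesis to the control of $\mathbb E[A(k,t)\mathds{1}_{Z_0(a)=l}]$ is misplaced, and the convergence of $\mathbb E[Y_t(k)Y_t(l)]$ --- the heart of your Step 1 --- is precisely the part you have not argued. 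A smaller but real omission: the grafted subtrees are not copies of the original tree (their ancestors carry the residual-lifetime law \eqref{eq:loiOversh}), so the summands are not i.i.d.\ copies of $\Xi_\cdot$; the paper needs a separate subsection (Lemmas \ref{lem:genConvAkt}--\ref{lem:bdgen}) to transfer all moment estimates, uniformly in the ancestral lifetime, to this setting before the decomposition can be exploited.
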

The proof of this result can be found in Section \ref{proof:th2}.
\begin{rem}
We are not able to compute explicitly the covariance matrix $K$ in the general case due to our method of demonstration. However, all our other results give explicit formulas. In particular, the case where $\mathbb{P}_{V}$ is exponential is given by the next theorem. The Yule case is also covered in the following theorem for $d=0$ although it does not satisfy the hypothesis of Theorem \ref{thm:tclAllelic}.
\end{rem}

\begin{thm}
\label{thm:tclexp}
Suppose that $V$ is exponentially distributed with parameter $d\in[0,b)$. In this case, $\alpha=b-d$. We still suppose that $\alpha<\theta$, then
\[
\left(e^{\alpha\frac{t}{2}}\left(\psi'(\alpha)A(k,t)-e^{\alpha t}c_{k}\mathcal{E}\right)\right)_{k\in N}\xrightarrow[t\to\infty]{(d)}\mathcal{L}\left(0,K\right), \ \text{w.r.t.}\ \mathbb{P}_{\infty},
\]
where $K$ is given by
\[
K_{l,k}=M_{l,k}+c_{k}c_{l}\frac{\alpha}{b}\left(1-6\frac{d}{\alpha}\right),
\]
and
\begin{align}
\label{eq:Cov}
&M_{l,k}=\notag\\&2\psi'(\alpha)\int_{0}^{\infty}\frac{\theta e^{-\theta a}}{W_{\theta}(a)^{2}}\left(\left(1-\frac{1}{W_{\theta}(a)}\right)^{l-1}\left(\mathbb{E}_{a}\left[A(k,a)\right]-c_{k}W(a)\right)+\left(1-\frac{1}{W_{\theta}(a)}\right)^{k-1}\left(\mathbb{E}_{a}\left[A(l,a)\right]-c_{l}W(a)\right)\right)da\notag\\
&-\psi'(\alpha)\int_{0}^{\infty}\theta W(a)^{-1}\mathbb{E}_{a}\left[\left(A(k,a)-c_{k}N_{a}\right)\mathds{1}_{Z_{0}(a)=l}+\left(A(l,a)-c_{l}N_{a}\right)\mathds{1}_{Z_{0}(a)=k} \right]\notag\\&+\mathds{1}_{l=k}\int_{0}^{\infty}\frac{\theta e^{-\theta s}}{W_{\theta}(s)^{2}}\left(1-\frac{1}{W_{\theta}(s)} \right)^{k-1}ds,
\end{align}
where $W$, $W_{\theta}$, $\psi'(\alpha)$ are defined in the Section \ref{sec:models}.
\end{thm}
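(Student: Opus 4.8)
The plan is to run the martingale-and-second-moment scheme of \cite{H1} (the one already used to prove Theorem~\ref{thm:tclAllelic}) in the present Markovian situation, where the scale functions are explicit, so that the limiting covariance can actually be evaluated, and then to read $K$ off the moment identities \eqref{eq:Mom1akate}--\eqref{eq:Mom2akate}.

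By the Cram\'er--Wold device it is enough to treat a single finite linear combination $\sum_k\lambda_k\big(\psi'(\alpha)A(k,t)-e^{\alpha t}c_k\mathcal E\big)$ and to show it converges, after the rescaling of the statement, to a centered Laplace variable of variance $\sum_{k,l}\lambda_k\lambda_l K_{l,k}$. When $V\sim\mathrm{Exp}(d)$ one has $\psi'(\alpha)=\alpha/b$, $\mathbb P(\mathrm{NonEx})=\alpha/b$, and $W(t)=\tfrac b\alpha e^{\alpha t}-\tfrac d\alpha$ (so $e^{\alpha t}F(t)\equiv d/\alpha$ in Lemma~\ref{lem:WComp}); $(N_t)$ is a linear birth--death process, $(e^{-\alpha t}N_t)$ is a true martingale with limit $\mathcal E/\psi'(\alpha)$, and the allelic configuration together with $Z_0$ is Markov with an explicit jump generator (each family of size $k$ acts as a particle drifting between types $k\pm1$ and emitting type-$1$ particles at rate $\theta k$, with $Z_0$ an extra source of type-$1$ families). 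Using this I would exhibit, for each $k$, a square-integrable martingale $\mathcal M^{(k)}$ with $\mathcal M^{(k)}_\infty=c_k\mathcal E$ such that $e^{\alpha t/2}\big(e^{-\alpha t}\psi'(\alpha)A(k,t)-\mathcal M^{(k)}_t\big)\to 0$ in $L^2$, the discarded part being governed by rates strictly below $\alpha/2$ (here $\theta>\alpha$ and Lemma~\ref{lem: asyComp} enter). The rescaled quantity then agrees, up to an $L^2$-negligible term, with a rescaled martingale increment, and I would invoke the time-reversed martingale central limit theorem: the mixed Gaussian, hence Laplace, limit arises because $e^{\alpha t}\big(\langle\mathcal M^{(k)}\rangle_\infty-\langle\mathcal M^{(k)}\rangle_t\big)$ converges almost surely to an explicit multiple of $\mathcal E$ --- obtained by inserting the a.s.\ convergences of Theorems~\ref{thm:ASCVNt}--\ref{thm:ASCVakate} into the jump-rate expression for the predictable bracket --- while the Lindeberg condition is automatic since all jumps are $O(e^{-\alpha s})$. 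Since the estimates only need finite exponential moments of $V$, which come for free here, the argument runs for every $d\in[0,b)$ and every $\theta>\alpha$, covering in particular the Yule case $d=0$ and the range $\theta\ge b$ excluded from Theorem~\ref{thm:tclAllelic}.

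To identify $K$, observe that by the $L^2$ convergence above $K_{l,k}=\lim_{t\to\infty}e^{-\alpha t}\mathbb E_\infty\big[\big(\psi'(\alpha)A(k,t)-e^{\alpha t}c_k\mathcal E\big)\big(\psi'(\alpha)A(l,t)-e^{\alpha t}c_l\mathcal E\big)\big]$, and the bracket vanishes on $\{N_t=0\}$ while $\mathbb P(N_t>0)\to\alpha/b$, so the conditioning only inserts a factor $b/\alpha$. Expanding the product, the terms containing $\mathcal E$ collapse to finite-$t$ moments via the birth--death identity $\mathbb E[N_s\mid\mathcal F_t]=e^{\alpha(s-t)}N_t$, which gives $e^{\alpha t}\mathbb E[A(k,t)\mathcal E]=\psi'(\alpha)\mathbb E[A(k,t)N_t]$ and $\mathbb E[\mathcal E^2]=2\alpha/b$, whence
\[
K_{l,k}=\psi'(\alpha)\lim_{t\to\infty}e^{-\alpha t}\Big(\mathbb E[A(k,t)A(l,t)]-c_l\,\mathbb E[A(k,t)N_t]-c_k\,\mathbb E[A(l,t)N_t]+\tfrac{2b}{\alpha}\,c_kc_l\,e^{2\alpha t}\Big).
\]
Into this one substitutes \eqref{eq:Mom2akate}, the companion formula for $\mathbb E[A(k,t)N_t]$ (or the identity $N_t=\sum_j jA(j,t)+Z_0(t)$), \eqref{eq:Mom1akate} for the first moments, and the asymptotics of $W,W_\theta$ from Lemmas~\ref{lem: asyComp}--\ref{lem:WComp}. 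The $e^{2\alpha t}$-order contributions cancel (this encodes $A(k,t)/N_t\to c_k$), and the surviving $e^{\alpha t}$-order part equals $M_{l,k}/\psi'(\alpha)+c_kc_l\big(1-6\tfrac d\alpha\big)$; multiplying by $\psi'(\alpha)=\alpha/b$ gives exactly the stated $K_{l,k}$. The three groups of \eqref{eq:Cov} come line by line from the three non-leading groups of \eqref{eq:Mom2akate} (the product-of-integrals pair, the mixed $A(\cdot,a)\mathds{1}_{Z_0(a)=\cdot}$ terms, and the diagonal term), and $\tfrac\alpha b(1-6\tfrac d\alpha)$ is precisely the $N_t$-fluctuation variance fed in through $e^{\alpha t}c_k\mathcal E\approx\psi'(\alpha)c_kN_t$, i.e.\ the central limit theorem of \cite{H1} specialised to $V\sim\mathrm{Exp}(d)$.

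The step I expect to be the true obstacle is this last computation: carrying the asymptotics of the iterated integrals in \eqref{eq:Mom2akate} far enough to see the exact cancellation at order $e^{2\alpha t}$ and to extract the right coefficient at order $e^{\alpha t}$, tracking the second-order corrections to $W$ and $W_\theta$, and --- most delicately --- performing the passage $\mathbb E\mapsto\mathbb E_\infty$ faithfully: it is the conditioning on non-extinction, acting on the spread of $\mathcal E$, that pins down the precise constants, in particular the coefficient $6$. A secondary nuisance is making the $L^2$-negligibility of the martingale remainders quantitative enough to transfer the central limit theorem back to the original coordinates, and checking joint convergence over the finitely many sizes $k$ involved.
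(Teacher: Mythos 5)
Your identification of the covariance is, in its skeleton, the paper's own route: the Markovian identity $\mathbb{E}[N_s\mid\mathcal{F}_t]=N_t\,\mathbb{E}[N_{s-t}]$, hence $\mathbb{E}[A(k,t)\mathcal{E}]=\psi'(\alpha)e^{-\alpha t}\mathbb{E}[A(k,t)N_t]$, is exactly the step that lets the Markov case bypass the renewal argument of Lemma \ref{lem:akegnt} (and hence the hypothesis on $V$), and your expansion of $K_{l,k}$ agrees with the paper's. But the step you explicitly defer as ``the true obstacle'' is where essentially all of the content of this theorem lives. The paper does not re-expand \eqref{eq:Mom2akate} term by term: it regroups the expansion as $\mathbb{P}(N_t>0)\psi'(\alpha)^2\,\mathbb{E}_t\left[(A(k,t)-c_kN_t)(A(l,t)-c_lN_t)\right]+c_kc_l\psi'(\alpha)\left(2e^{2\alpha t}-\psi'(\alpha)\mathbb{E}_t[N_t^2]\,\mathbb{P}(N_t>0)\right)$, recognizes the first limit as the matrix $M$ already computed in the proof of Theorem \ref{thm:cltFin} (equation \eqref{eq:mlk}), and evaluates the second bracket from $\mathbb{E}_t[N_t^2]=2W(t)^2-W(t)$ together with the second-order expansions of $W$ and \eqref{eq:survie}. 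You assert the outcome $\left(1-6\tfrac d\alpha\right)$ without performing this evaluation; that coefficient is precisely the kind of constant that a second-order slip in $W$ or in $\mathbb{P}(N_t>0)$ changes, so quoting it from the statement cannot count as deriving it.

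The more serious problem is the martingale scheme you propose for the distributional convergence: the object you need cannot exist. If $\mathcal{M}^{(k)}$ is a square-integrable martingale with $e^{\alpha t/2}\bigl(e^{-\alpha t}\psi'(\alpha)A(k,t)-\mathcal{M}^{(k)}_t\bigr)\to0$ in $L^2$, then since $e^{-\alpha t}\psi'(\alpha)A(k,t)\to c_k\mathcal{E}$ in $L^2$ (Theorem \ref{thm:ASCVakate}), $\mathcal{M}^{(k)}$ is $L^2$-bounded, hence uniformly integrable and closed, so $\mathcal{M}^{(k)}_t=\mathbb{E}[c_k\mathcal{E}\mid\mathcal{F}_t]=c_k\psi'(\alpha)e^{-\alpha t}N_t$; but then the quantity you claim is $L^2$-negligible equals $\psi'(\alpha)e^{-\alpha t/2}(A(k,t)-c_kN_t)$, which by Theorem \ref{thm:cltFin} converges to a nondegenerate Laplace law with limiting second moment proportional to $M_{k,k}\neq0$. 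In other words, the fluctuation of $A(k,t)$ is not the tail of a single martingale with limit $c_k\mathcal{E}$: it splits into the non-martingale part $A(k,t)-c_kN_t$ (which produces the $M$ block of $K$) and the martingale tail of $e^{-\alpha t}N_t$ (which produces $c_kc_l\tfrac\alpha b(1-6\tfrac d\alpha)$), and these two contributions, both of order $e^{\alpha t/2}$, must be handled jointly. The paper sidesteps this entirely by inheriting the convergence in law from the subtree decomposition of Section \ref{proof:th2} and only redoing the computation of $\mathbb{E}[A(k,t)\mathcal{E}]$. A minor further point: for exponential $V$ with $d>0$ the condition $\int e^{(\theta-\alpha)v}\mathbb{P}_V(dv)>1$ holds automatically whenever $\theta>\alpha$, so no range of $\theta$ is excluded from Theorem \ref{thm:tclAllelic} in that case; only the Yule case $d=0$ falls outside it.
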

The proof of this result can be found in Section \ref{proof:th4}.
Note that an explicit formula for $\mathbb{E}_{t}A(k,t)$ is given by \eqref{eq:Mom1akate}. Explicit formulas for $\mathbb{E}_{t}\left[A(k,t)\mathds{1}_{Z_{0}(t)=l} \right]$ can also be found in Proposition 4.5 of \cite{CH}, and a formula for $\mathbb{E}_{t}\left[N_{a}\mathds{1}_{Z_{0}(t)=k} \right]$ can be found in Proposition 4.1 of \cite{CL1}.

\begin{rem}
The condition on $V$ in Theorem \ref{thm:tclAllelic} is required only to ensure controls of the moments of the considered quantities. However, although the Yule case does not satisfy this condition ($V=\infty$ p.s.) it is included in this last theorem (d=0). This suggests that the condition on $V$ may not be needed.
\end{rem}
\subsection{CLT for the error between $A(k,t)$ and $c_{k}N_{t}$}
The next theorem concerns the error between $A(k,t)$ and $c_{k}N_{t}$. Once again, we have an explicit expression of the covariance matrix of the limit.
\begin{thm}
\label{thm:cltFin}
Suppose that $\theta>\alpha$, then
\[
\psi'(\alpha)\left(e^{\alpha\frac{t}{2}}\left(A(k,t)-c_{k}N_{t}\right)\right)_{k\in \mathbb{N}}\xrightarrow[t\to\infty]{(d)}\mathcal{L}\left(0, M\right), \ \text{w.r.t.}\ \mathbb{P}_{\infty},
\]
where $M$ is defined in relation \eqref{eq:Cov}.
\end{thm}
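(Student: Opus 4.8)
The plan is to use the observation that $\psi'(\alpha)\bigl(A(k,t)-c_kN_t\bigr)$ is a general branching process counted by a random characteristic whose Malthusian limit \emph{vanishes}: by Theorems~\ref{thm:ASCVNt} and \ref{thm:ASCVakate}, $e^{-\alpha t}\bigl(A(k,t)-c_kN_t\bigr)\to 0$ a.s.\ and in $L^2$. This forces the fluctuations onto the scale $e^{\alpha t/2}$ and points to a Laplace limit: to first order the quantity is a sum of a random number $\asymp e^{\alpha t}\mathcal{E}/\psi'(\alpha)$ of asymptotically independent centred contributions, and the common mixing variable $\mathcal{E}$ is what turns the Gaussian limit into $\sqrt{\mathcal{E}}\,G$. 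Concretely I would run the argument of \cite{H1}: decompose the splitting tree along the coalescent point process at a cutoff time $t-s$ (using the adjunction of trees of Figure~\ref{fig : coalpointproc} and formula~\eqref{eq:recFormula}), let $t\to\infty$ with $s$ fixed to obtain a conditionally Gaussian limit, and only then let $s\to\infty$. Theorem~\ref{thm:tclAllelic} should then follow by adding back the demographic fluctuation $c_k\bigl(N_t-e^{\alpha t}\mathcal{E}/\psi'(\alpha)\bigr)$ through the central limit theorem for $N_t$ of \cite{H1}, whose extra integrability hypothesis is needed precisely for that term, not for the present statement.

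The core is a conditional central limit theorem. Writing $\mathcal{F}_{t-s}$ for the information up to time $t-s$, I would decompose $A(k,t)-c_kN_t$ into the contributions of the subtrees stemming from the individuals alive at time $t-s$, plus a remainder gathering the families founded by mutations older than $t-s$; conditionally on $\mathcal{F}_{t-s}$ the subtree contributions are i.i.d.\ and their number $N_{t-s}$ tends to infinity. This is exactly where $\theta>\alpha$ enters: the clonal splitting tree is then \emph{subcritical}, so by Lemma~\ref{lem: asyComp} an old family survives up to time $t$ only with probability $O(e^{-(\theta-\alpha)s})$, and combined with the first-moment formula~\eqref{eq:Mom1akate} this makes both the remainder and the conditional bias of the summands, after division by $e^{\alpha t/2}$, uniformly $O(e^{-cs})$ for some $c>0$. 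A Lindeberg-type conditional CLT applied to the centred sum --- its Lindeberg and uniform-integrability conditions being checked from the second-moment formula~\eqref{eq:Mom2akate} --- together with $e^{-\alpha t}N_{t-s}\to e^{-\alpha s}\mathcal{E}/\psi'(\alpha)$ then yields a limit $\sqrt{\mathcal{E}}\,G_s$ with $G_s\sim\mathcal{N}(0,M^{(s)})$, and $e^{-\alpha s}$ times the (suitably normalised) per-subtree conditional covariance should converge, as $s\to\infty$, to the matrix $M$ of~\eqref{eq:Cov}. That last identification is a lengthy but routine computation with $W$, $W_\theta$, \eqref{eq:Mom1akate}, \eqref{eq:Mom2akate} and Lemmas~\ref{lem: asyComp} and \ref{lem:WComp}.

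For the $\mathbb{R}^{\mathbb{N}}$-valued convergence, understood for the product topology, the Cram\'er--Wold device reduces everything to finitely many coordinates $k_1<\dots<k_m$, so no tail estimate on $\sum_{k>K}A(k,t)$ is required; one then identifies the limiting characteristic function as $\bigl(1+\tfrac12\lambda'M\lambda\bigr)^{-1}=\mathbb{E}\bigl[\exp\bigl(-\tfrac12\lambda'M\lambda\,\mathcal{E}\bigr)\bigr]$, i.e.\ that of $\mathcal{L}(0,M)$.

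The hard part will be the interchange of the two limits $t\to\infty$ and then $s\to\infty$: one has to control, \emph{uniformly in $t$}, the remainder term (old families) and the conditional bias of the i.i.d.\ summands while simultaneously running the conditional CLT, and both rely on sharp second-moment bounds extracted from \eqref{eq:Mom2akate} and on the exponential gain $e^{-(\theta-\alpha)s}$ granted by $\theta>\alpha$ --- without clonal subcriticality $A(k,t)-c_kN_t$ would not even be of order $e^{\alpha t/2}$. A secondary, more bookkeeping-flavoured nuisance is the treatment of the grafting points, namely the exclusion of the ancestral clonal family and the types already carried by the individuals alive at time $t-s$, which must be shown not to influence the limit.
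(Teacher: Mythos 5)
Your overall architecture --- decompose along the population at a cutoff time, apply a conditional Lindeberg CLT to the i.i.d.\ subtree contributions, and recover the Laplace law as the mixture $\sqrt{\mathcal{E}}\,G$ --- is a legitimate route, and it genuinely differs from the paper's: there the cutoff is taken at $u=\beta t$ \emph{growing with} $t$, a single limit is taken, and the Laplace characteristic function $\bigl(1+\tfrac12\xi' \mathcal{M}\xi\bigr)^{-1}$ drops out directly from the generating function of the \emph{geometric} number $N_{\beta t}$ of summands after a Taylor expansion of $\varphi_K$, with no Gaussian step and no interchange of limits. The identification of $M$ you defer is exactly what Section \ref{proof:th3} actually carries out, via \eqref{eq:DerEq}, \eqref{eq:R} and the asymptotics of $W$, $W_\theta$.

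However, your quantitative claim controlling the fixed-$s$ decomposition is wrong, and this is where the proposal breaks. Each subtree contribution $A^{(i)}(k,s,O_i)-c_kN^{(i)}_s(O_i)$ has (conditional) mean of order $-e^{(\alpha-\theta)s}$ (by \eqref{eq:Mom1akate}, $\mathbb{E}_s[c_kN_s-A(k,s)]=W(s)\int_s^\infty\frac{\theta e^{-\theta a}}{W_\theta(a)^2}(1-\tfrac{1}{W_\theta(a)})^{k-1}da\asymp e^{(\alpha-\theta)s}$ in the clonal subcritical case), and there are $N_{t-s}\asymp e^{\alpha(t-s)}\mathcal{E}$ of them; so the aggregate bias of your centring, divided by $e^{\alpha t/2}$, is of order $e^{\alpha t/2}e^{-\theta s}$, which \emph{diverges} as $t\to\infty$ for each fixed $s$. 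Symmetrically, the remainder (old families) has conditional mean of the same order $e^{\alpha t}e^{-\theta s}$ with the opposite sign, so it is likewise \emph{not} $O(e^{-cs})$ uniformly in $t$ after division by $e^{\alpha t/2}$. Only the \emph{sum} of these two terms --- i.e.\ $\mathbb{E}\bigl[A(k,t)-c_kN_t\mid\mathcal{F}_{t-s}\bigr]$ --- is negligible, because its expectation is $O(e^{(\alpha-\theta)t})=o(e^{\alpha t/2})$ and its fluctuation around that expectation, being a sum of $N_{t-s}$ terms of size $O(e^{(\alpha-\theta)s})$, is $O(e^{\alpha(t-s)/2}e^{(\alpha-\theta)s})=e^{\alpha t/2}\cdot O(e^{(\alpha/2-\theta)s})$. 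The repair is therefore to centre every summand and the remainder at its conditional mean given $\mathcal{F}_{t-s}$, treat the total conditional mean as a separate concentrating term, and check that the centred remainder has conditional variance $O(e^{\alpha t}e^{-\theta s})$; discarding the remainder and the bias separately, as you propose, is not valid. This compensation between old families and the deficit of young ones is precisely what the paper's choice $u=\beta t$ with $\beta<1-\tfrac{\alpha}{\theta}$ (so that $e^{\alpha t/2}e^{-\theta(1-\beta)t}\to0$ and $\mathbb{P}_{\beta t}(\Gamma_{\beta t,t})\to1$) is designed to avoid.
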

The proof of this result can be found in Section \ref{proof:th3}.
\begin{rem}
We do not known yet if the exponential random variable appearing the Gaussian mixing leading to a Laplace distribution is the same as the exponential limit of $e^{-\alpha t}A(k,t)$. However, the CLT for Markov branching processes in \cite{AN} suggest that it is, actually, the case. If, this is true in our case, it would be enough to know the correlations between the limits involved in Theorem \ref{thm:tclAllelic} and \ref{thm:cltFin} to obtain an explicit expression for the covariance matrix in Theorem \ref{thm:tclAllelic}.
\end{rem}
\section{Proof of Theorem \ref{thm:tclAllelic}}
\label{proof:th2}
The proof of this theorem is based on the proof of the central limit theorem for the process $(N_{t},\ t\in\mathbb{R}_{+})$ given in \cite{H1}. The structure of the proof follows the same lines and is detailed in Section 4 of \cite{H1}. In a sake of conciseness, we only highlight the difficulties arising in our new context. The results which are straightforward rewording of the proofs given in \cite{H1} are left to the reader.  However, we think it is necessary to recall some aspects of \cite{H1}, in particular from \cite[Section 4]{H1}. First, we recall that there exists a family $(N^{(i)}_{t},\  t\in\mathbb{R}_{+} )_{i\geq 1}$ of i.i.d.\ population counting processes with the same law as $\left(N_{t},\ t\in\mathbb{R}_{+}\right)$, and a Poisson random measure $\xi$ on $\mathbb{R}_{+}$ with intensity $b\, da$ such that
\begin{equation}
\label{eq:decomposition}
N_{t}=\int_{[0,t]}N^{(\xi_{u})}_{t-u}\mathds{1}_{V_{\emptyset}>u}\ \xi(du)+\mathds{1}_{V_{\emptyset}> t}, \quad \text{almost surely},
\end{equation}where $\xi_{u}=\xi\left([0,u]\right)$. In addition, we have that 
 $t\to\mathbb{E}\left[N_{t}\mathcal{E}\right]$ is the unique solution bounded on finite intervals of the renewal equation,
 \begin{align}
 f(t)=&\int_{\mathbb{R}_{+}}f(t-u)be^{-\alpha u}\mathbb{P}\left(V>u\right)du\nonumber
 \\&+\alpha b\mathbb{E}\left[N_{\cdot}\right]\star \left( \int_{\mathbb{R}_{+}}e^{-\alpha v}\mathbb{P}\left(V>\cdot,V>v\right)dv\right)(t)\nonumber \\&+\alpha\int_{\mathbb{R}_{+}}e^{-\alpha v}\mathbb{P}\left(V>t,V>v\right)dv\label{eq:renewNt},
 \end{align}
 and it is given by
 \begin{equation}
 \label{eq:eqeq}
 \mathbb{E}\left[N_{t}\mathcal{E}\right]=\left(1+\frac{\alpha}{b}-e^{-\alpha t}\right)W(t)-\left(1-e^{-\alpha t} \right)W\star\mathbb{P}_{V}(t).
 \end{equation}
 We also recall that equation \eqref{eq:renewNt} is obtained by taking the product $N_{t}N_{s}$, for some real number $t$ and $s$. Now, equation \eqref{eq:decomposition} allows to obtain a renewal equation for $\mathbb{E}[N_{t}N_{s}]$ which leads to \eqref{eq:renewNt} when taking the limit in $s$ of the renormalized equation.
 We also recall that Lemma \ref{lem:WComp} and equation \eqref{eq:probaNoCond} gives
 
 	\begin{equation}
 	\label{eq:survie}
 	\frac{1}{\mathbb{P}\left(N_{t}>0\right)}=\frac{b}{\alpha}-\frac{b\mu\psi'(\alpha)}{\alpha}e^{-\alpha t}+o(e^{-\alpha t}).
 	\end{equation}
This also leads, in conjunction with equation \eqref{eq:eqeq}, to
 	\begin{equation}
 	\label{eq:AsNtE}
 	\mathbb{E}_{t}N_{t}\mathcal{E}=\frac{2e^{\alpha t}}{\psi'(\alpha)}-\frac{1}{\psi'(\alpha)}-3\mu+o(1).
 	\end{equation}

 \medskip
 Finally, let us recall that for any fixed time $u$, there is a natural order (for instance given by the contour process \cite{L10}) of the individuals alive at this time. Moreover, we denote, for $1\leq i\leq N_{t}$, $O_{i}^{(u)}$ the residual lifetime of the $i$th individual alive at time $u$. The law of the vector $(O^{(u)}_{2},\dots,O^{(u)}_{N_{u}})$ is given by the following lemma which comes from \cite{H1}.
 \begin{lem}
 	\label{lem:residual}
 	Let $u$ in $\mathbb{R}_{+}$, we denote by $O_{i}$ for $i$ an integer between $1$ and $N_{u}$ the residual lifetime of the $i$th individuals alive at time $u$. Then under $\mathbb{P}_{u}$, the family $\left(O_{i},\ i\in\{1,\dots, N_{u}\}\right)$ form a family of independent random variables, independent of $N_{u}$, and, expect $O_{1}$, having the same distribution, given by, for $2\leq i\leq N_{t}$,
 	\begin{equation}
 	\label{eq:loiOversh}
 	\mathbb{P}_{u}(O_{i}\in dx)=\int_{\mathbb{R}_{+}}\ \frac{W(u-y)}{W(u)-1}b \mathbb{P}\left(V-y\in dx \right)\ dy.
 	\end{equation}
 	Moreover, it follows that the family $\left(N_{s}(O_{i}),s\in\mathbb{R}_{+}\right)_{1\leq i\leq N_{u}}$ is an independent family of process, i.i.d.\ for $i\geq 2$, and independent of $N_{u}$.
 \end{lem}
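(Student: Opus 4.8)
I would derive the statement from the jumping chronological contour process (JCCP) of the splitting tree stopped at time $u$, reading off the residual lifetimes from it in the same spirit as the coalescence times $(H_{i})$ are read off in \cite{L10}. Recall that this contour process $Y$ is a piecewise–deterministic Markov process starting from $V_{\emptyset}\wedge u$, decreasing with unit slope, jumping upwards at Poissonian rate $b$ by i.i.d.\ copies of $V$ (a jump that would overshoot $u$ being recorded but truncated at $u$), run until it hits $0$; the individuals alive at time $u$ are in bijection with the successive visits of $Y$ to level $u$, the $i$-th visit corresponding to the $i$-th individual, and $H_{i}$ equals $u$ minus the infimum of the excursion of $Y$ below $u$ separating the $i$-th and $(i+1)$-th visits. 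The key remark is that the residual lifetime $O_{i}$ is also a contour functional: if the $i$-th visit ($i\ge 2$) is produced by a jump of $Y$ from a level $u-\tau_{i}$, this jump necessarily has size $>\tau_{i}$ and $O_{i}$ is exactly its overshoot above $u$, so that conditionally on $\tau_{i}$ the law of $O_{i}$ is that of $V-\tau_{i}$ given $V>\tau_{i}$; for $i=1$, $O_{1}$ is the overshoot of $Y$ at its first passage above $u$ (equal to $V_{\emptyset}-u$ when $V_{\emptyset}>u$).

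The independence and the i.i.d.\ structure then come from the strong Markov property of $Y$ at the successive hitting times of level $u$. After the first visit, $Y$ restarts from level $u$ decreasing, hence the excursions $E_{1},E_{2},\dots$ of $Y$ below $u$ performed after that visit are i.i.d.; each is of one of two types, either it is absorbed at $0$ (and no further individual is produced) or it is \emph{crossing} and ends with a jump above $u$ producing the next visit. Since $N_{u}$ is the first index $n$ for which $E_{n}$ is absorbed, under $\mathbb{P}_{u}$ the event $\{N_{u}=n\}$ is $\{E_{1},\dots,E_{n-1}\text{ crossing},\ E_{n}\text{ absorbed}\}$; by independence of the $E_{j}$, conditionally on this event $E_{1},\dots,E_{n-1}$ are i.i.d.\ with the law of a crossing excursion, independently of $n$. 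As $O_{i+1}$ is a functional of $E_{i}$ alone, this gives that $(O_{i})_{2\le i\le N_{u}}$ is, under $\mathbb{P}_{u}$, an i.i.d.\ family independent of $N_{u}$, with common law the law of the overshoot of a crossing excursion. Moreover $O_{1}$ is a functional of the portion of $Y$ up to and including its first visit to $u$, which by the same strong Markov property is independent of everything occurring afterwards; hence $O_{1}$ is independent of $\bigl(N_{u},(O_{i})_{i\ge 2}\bigr)$, which is the whole first assertion once the common law is identified.

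It remains to compute the law of the overshoot of a crossing excursion, equivalently the law of $\tau_{i}$ for $i\ge 2$ — the only genuinely computational step. Here I would use the fluctuation identities for $Y$ contained in \cite{L10}: started from level $u$, $Y$ hits $0$ before jumping above $u$ with probability $1/W(u)$, and, more importantly, its expected occupation time of a level $u-y$ before it jumps above $u$ or hits $0$ equals $W(u-y)/W(u)$ (the Green's-function identity for these processes; it is consistent with the values $1$ at $y=0$ and $1/W(u)$ at $y=u$). Since jumps occur at rate $b$ per unit time and a jump from level $u-y$ is crossing with probability $\mathbb{P}(V>y)$, the compensation formula applied to the Poisson jump measure of $Y$ gives that the unnormalised density of the pair (excursion crossing, undershoot level $=u-y$) is $b\,\mathbb{P}(V>y)\,W(u-y)/W(u)\,dy$; integrating over $y\in[0,u]$ and using $\int_{0}^{u}b\,\mathbb{P}(V>y)W(u-y)\,dy=W(u)-1$ — which follows immediately from \eqref{eq:laplace} together with $\int_{0}^{\infty}W(s)e^{-\lambda s}ds=1/\psi(\lambda)$ — recovers the probability $1-1/W(u)$ of a crossing excursion, so that $\tau_{i}$ has density $b\,\mathbb{P}(V>y)W(u-y)/(W(u)-1)$ on $[0,u]$. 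Combining with $O_{i}\mid\tau_{i}\overset{d}{=}(V-\tau_{i}\mid V>\tau_{i})$ yields exactly \eqref{eq:loiOversh}.

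The ``Moreover'' assertion is then immediate from the branching property: conditionally on the tree up to time $u$ — in particular conditionally on $(O_{i})_{1\le i\le N_{u}}$ — the subtrees descending from the individuals alive at $u$ are independent, and the one descending from the $i$-th individual is a splitting tree whose ancestor has remaining lifetime $O_{i}$; hence $\bigl(N_{\cdot}(O_{i})\bigr)_{1\le i\le N_{u}}$ are conditionally independent given $(O_{j})_{j}$ with the $i$-th depending only on $O_{i}$, and the stated independence, the i.i.d.\ property for $i\ge 2$ and the independence from $N_{u}$ follow from the properties of the $O_{i}$ just established. The only real obstacle in this program is the occupation-measure identity $\mathbb{E}[\text{time at level }u-y\text{ before exiting}]=W(u-y)/W(u)$; everything else is bookkeeping on the contour process plus the strong Markov and branching properties.
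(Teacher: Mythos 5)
The paper does not actually prove this lemma: it is quoted verbatim from \cite{H1} (``which comes from \cite{H1}''), so there is no in-paper proof to compare against. Your contour-process argument is the standard route to this statement and, as far as I can tell, is exactly the one used in the cited source: the strong Markov property of the JCCP at successive visits of level $u$ gives the i.i.d.\ excursion/geometric-trials structure (hence independence of $(O_i)_{i\ge 2}$ from $N_u$ and from $O_1$), the compensation formula applied to the occupation density $W(u-y)/W(u)$ of the killed contour yields the undershoot density $b\,\mathbb{P}(V>y)W(u-y)/(W(u)-1)$, and composing with the overshoot law of $V$ gives \eqref{eq:loiOversh}; your normalisation check $\int_0^u bW(u-y)\mathbb{P}(V>y)\,dy=W(u)-1$ via the Laplace transforms \eqref{eq:laplace} is correct, as is the reduction of the ``Moreover'' part to the branching property. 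The one ingredient you flag as an obstacle, the Green's-function identity for the doubly-killed contour, is indeed available (it is the $x=a^-$ case of the classical two-sided-exit resolvent $\frac{W(x)W(a-y)}{W(a)}-W(x-y)$ for the spectrally one-sided contour, and appears in \cite{L10}), so the proof is complete.
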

 
 \medskip
 To end this reminder, let us recall the decomposition of the limiting random variable $\mathcal{E}$ (given for instance in Theorem \ref{thm:ASCVNt}) at a fixed time $u$.
 \begin{lem}{\cite[Lemma 6.8]{H1}}
 	\label{lem:dec}
 	We have the following decomposition of $\mathcal{E}$,
 	\[
 	\mathcal{E}=e^{-\alpha u}\sum_{i=1}^{N_{u}}\mathcal{E}_{i}\left(O_{i}\right), \quad a.s.
 	\]
 	Moreover, under $\mathbb{P}_{u}$, the random variables $\left(\mathcal{E}_{i}\left(O_{i}\right)\right)_{i\geq 1}$ are independent, independent of $N_{u}$, and identically distributed for $i\geq 2$.
 \end{lem}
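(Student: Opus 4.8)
The plan is to fix the time $u$ and exploit the branching property at time $u$ together with the almost sure law of large numbers of Theorem \ref{thm:ASCVNt}. At time $u$ there are $N_{u}$ individuals alive, ordered as in Lemma \ref{lem:residual}, and each of them generates its own independent subpopulation. Writing $N_{s}(O_{i})$ for the number of individuals at time $u+s$ descending from the $i$th individual alive at time $u$ (so that these are exactly the processes introduced in Lemma \ref{lem:residual}), the branching property gives the almost sure identity
\[
N_{u+s}=\sum_{i=1}^{N_{u}}N_{s}(O_{i}),\qquad s\geq 0.
\]
The whole argument then consists in passing to the limit $s\to\infty$ in this finite sum.

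First I would establish that each subpopulation process obeys its own law of large numbers, namely that $e^{-\alpha s}N_{s}(O_{i})$ converges almost surely, as $s\to\infty$, to $\mathcal{E}_{i}(O_{i})/\psi'(\alpha)$ for some random variable $\mathcal{E}_{i}(O_{i})$. The only difference between $(N_{s}(O_{i}))_{s}$ and the original process $(N_{s})_{s}$ is that the founding individual carries a residual lifetime $O_{i}$ instead of a fresh lifetime $V$. Decomposition \eqref{eq:decomposition}, read with the ancestor's lifetime replaced by $O_{i}$, expresses $N_{s}(O_{i})$ as the term $\mathds{1}_{O_{i}>s}$ plus a sum of finitely many fresh independent copies of $(N_{\cdot})$ shifted to their Poissonian birth times in $[0,O_{i}]$. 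Since $O_{i}<\infty$ almost surely, this sum has finitely many terms, the contribution $\mathds{1}_{O_{i}>s}$ vanishes after normalization, and Theorem \ref{thm:ASCVNt} applied to each fresh copy yields the claimed almost sure convergence; this is in essence the last assertion of Lemma \ref{lem:residual}, which moreover already records that these trajectories carry the relevant independence structure.

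Next I would combine these convergences. Applying Theorem \ref{thm:ASCVNt} to $N_{u+s}$ along the shifted time $u+s$ gives
\[
\frac{\mathcal{E}}{\psi'(\alpha)}=\lim_{s\to\infty}e^{-\alpha(u+s)}N_{u+s}=e^{-\alpha u}\lim_{s\to\infty}e^{-\alpha s}\sum_{i=1}^{N_{u}}N_{s}(O_{i}).
\]
Because $N_{u}$ is almost surely finite, for each fixed realization the sum has a fixed finite number of summands, so the limit may be taken term by term, and using the convergence of each subpopulation established above I obtain
\[
\frac{\mathcal{E}}{\psi'(\alpha)}=e^{-\alpha u}\sum_{i=1}^{N_{u}}\frac{\mathcal{E}_{i}(O_{i})}{\psi'(\alpha)},
\]
which after cancelling $\psi'(\alpha)$ is the announced decomposition $\mathcal{E}=e^{-\alpha u}\sum_{i=1}^{N_{u}}\mathcal{E}_{i}(O_{i})$.

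Finally, the distributional statement follows by transfer. Each $\mathcal{E}_{i}(O_{i})$ is a measurable functional of the trajectory $(N_{s}(O_{i}))_{s\geq 0}$, being its normalized almost sure limit. By Lemma \ref{lem:residual}, under $\mathbb{P}_{u}$ the processes $(N_{s}(O_{i}))_{s\geq 0}$ are independent, independent of $N_{u}$, and identically distributed for $i\geq 2$; all three properties are preserved under the measurable passage to the limit, yielding the corresponding properties for the family $(\mathcal{E}_{i}(O_{i}))_{i\geq 1}$. I expect the main delicate point to be the second paragraph: one must ensure that the founding individual's residual (rather than fresh) lifetime does not destroy the almost sure convergence, i.e.\ that a law of large numbers of the type of Theorem \ref{thm:ASCVNt} genuinely holds for a population rooted at an individual with a prescribed residual lifetime, and that the interchange of limit and finite sum is legitimate so that almost sure convergence of each summand transfers to almost sure convergence of the random but finite sum.
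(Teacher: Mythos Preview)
The paper does not give its own proof of this lemma; it is quoted verbatim from \cite[Lemma~6.8]{H1} and used as a black box. Your argument is the natural one and is almost certainly what is done in \cite{H1}: decompose the population at level $u$ via the branching property, apply the law of large numbers to each of the $N_u$ subtrees, pass the (a.s.\ finite) sum through the limit, and read off the independence structure from Lemma~\ref{lem:residual}.

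One small caveat on the point you yourself flag as delicate. You write ``since $O_i<\infty$ almost surely, this sum has finitely many terms''. The model in this paper allows $\mathbb{P}_V(\{\infty\})>0$ (the integral in \eqref{eq:laplace} runs over $(0,\infty]$, and the Yule case $V=\infty$ a.s.\ is explicitly treated), so an individual may have infinitely many children over its lifetime. In that case the decomposition \eqref{eq:decomposition} of $N_s(O_i)$ is an infinite Poisson sum and the termwise passage to the limit needs a summability argument rather than mere finiteness. This is exactly why the limiting object is \emph{defined} in \eqref{eq:ancBranlim} as $\mathcal{E}(\Xi)=\int_{[0,\infty]}\mathcal{E}_{\xi_u}e^{-\alpha u}\mathds{1}_{\Xi>u}\,\xi(du)$: the exponential weights $e^{-\alpha u}$ make the series converge a.s., and one then checks (e.g.\ by an $L^2$ or dominated convergence argument) that $e^{-\alpha s}N_s(\Xi)$ indeed converges to $\mathcal{E}(\Xi)/\psi'(\alpha)$. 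Once you replace the ``finitely many children'' shortcut by this observation, your proof is complete.
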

 We can now start the proof of theorem \ref{thm:tclAllelic}.
As in \cite{H1}, the proof begins by some estimate on moments.
\subsection{Preliminary moments estimates}
We start by computing the moment in the case of a standard splitting tree. According to \cite[Section 4]{H1}, the next step is to obtain the same kind of estimates in the case of a splitting tree whose ancestor individual has a lifetime distribution which can be different from the rest of the population.
\subsubsection{Case $V_{\emptyset}\overset{\mathcal{L}}{=}V$}
One of the main difficulties to extend the preceding proof to the frequency spectrum is to get estimates on
\[
\mathbb{E}\left[\left(\psi'(\alpha)A(k,t)-e^{\alpha t}c_{k}\mathcal{E} \right)^{n}\right], \ \text{for } n=2\text{ or }3.
\]
We first study the renewal equation satisfied by $\mathbb{E}A(k,t)\mathcal{E}$ similarly as in \cite[Lemma 6.1]{H1}.
\begin{lem}[Joint moment of $\mathcal{E}$ and $A(k,t)$]
\label{lem:jointMomAk}
$\mathbb{E}\left[A(k,t)\mathcal{E}\right]$ is the unique solution bounded on finite intervals of the renewal equation,
\begin{align}
\label{eq:renewAk}
f(t)=&\int_{\mathbb{R}_{+}}f(t-u)be^{-\alpha u}\mathbb{P}\left(V>u\right)du\notag\\&+\alpha\mathbb{E}\left[A(k,.)\right]\star b\left( \int_{\mathbb{R}_{+}}e^{-\alpha v}\mathbb{P}\left(V>.,V>v\right)dv\right)(t)\notag\\&+\alpha\mathbb{E}\left[\mathcal{E}X_{t}\right],
\end{align}
with $X_{t}$ the number of families of size $k$ alive at time $t$ whose original mutation has taken place during the lifetime of the ancestor individual.
\end{lem}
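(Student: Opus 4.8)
The plan is to mimic, step by step, the derivation of the renewal equation \eqref{eq:renewNt} for $\mathbb{E}[N_t\mathcal{E}]$ carried out in \cite[Lemma 6.1]{H1}, the single new ingredient being a first--generation decomposition of the frequency spectrum that keeps track of the mutations. Following \cite{H1}, instead of working with $\mathcal{E}$ directly (which is not a functional of the tree up to a finite time) one works with the joint moment $t\mapsto\mathbb{E}[A(k,t)N_s]$, obtains a renewal equation for it in which the convolution kernel shifts the two time variables simultaneously, renormalises by $e^{-\alpha s}$ and passes to the limit $s\to\infty$, using $\psi'(\alpha)e^{-\alpha s}N_s\to\mathcal{E}$ in $L^2$ from Theorem~\ref{thm:ASCVNt}.

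Keep the notation of \eqref{eq:decomposition}: $V_\emptyset$ is the lifetime of the ancestor, $\xi$ its Poisson birth measure of rate $b$, $(u_i)$ the atoms of $\xi$, and at each $u_i<V_\emptyset$ a subtree is grafted. The first step is to check that such a grafted subtree, together with its own mutation process, is an independent copy of the whole model, independent of the pair $(V_\emptyset,\xi)$; the only difference with a fresh ancestor is that the root of the $i$th subtree does not carry a brand new type but the type borne by the ancestor at time $u_i$, and this is immaterial to its \emph{non--clonal} frequency spectrum $A^{(i)}(k,\cdot)$. Partitioning the types present at time $t$ into the ancestral type (clonal, hence excluded from $A(k,t)$), the types whose founding mutation was carried by the ancestor during $(0,V_\emptyset\wedge t)$, and the types founded strictly inside one of the subtrees, one gets, almost surely,
\[
A(k,t)=X_t+\sum_i A^{(i)}(k,t-u_i)\,\mathds{1}_{u_i<V_\emptyset},
\]
with $X_t$ as in the statement. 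There is no boundary term $\mathds{1}_{V_\emptyset>t}$ here (contrary to \eqref{eq:decomposition}), because the ancestor carries the clonal type and therefore never contributes to $A(k,t)$.

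Writing $N_s$ through the same decomposition, $N_s=\mathds{1}_{V_\emptyset>s}+\sum_j N^{(j)}_{s-u_j}\mathds{1}_{u_j<V_\emptyset}$, I would expand the product $A(k,t)N_s$, take the expectation conditionally on $(V_\emptyset,\xi)$ and use the branching independence. The pairs $(i,i)$, where the $i$th subtree feeds both $A^{(i)}$ and $N^{(i)}$, yield after renormalisation the self--renewal convolution of the renormalised joint moment against the kernel $K(du)=b\,e^{-\alpha u}\mathbb{P}(V>u)\,du$, which is a probability measure by \eqref{eq:intalpha}; the pairs $(i,j)$ with $i\neq j$, involving two independent subtrees, yield through Campbell's formula for $\xi$ restricted to $[0,V_\emptyset]$ and $e^{-\alpha s}\mathbb{E}[N_s]\to\alpha/(b\psi'(\alpha))$ the term $\alpha\,\mathbb{E}[A(k,\cdot)]\star b\big(\int_{\mathbb{R}_+}e^{-\alpha v}\mathbb{P}(V>\cdot,V>v)\,dv\big)(t)$; the terms carrying the factor $X_t$ collapse, once $s\to\infty$ (so that $\psi'(\alpha)e^{-\alpha s}\sum_j N^{(j)}_{s-u_j}\mathds{1}_{u_j<V_\emptyset}\to\mathcal{E}$), onto the inhomogeneity $\alpha\,\mathbb{E}[\mathcal{E}X_t]$ of \eqref{eq:renewAk}; and the terms carrying $\mathds{1}_{V_\emptyset>s}$ vanish because $e^{-\alpha s}\mathbb{P}(V_\emptyset>s)\to0$ and the remaining factors have a finite first moment for fixed $t$. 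This produces \eqref{eq:renewAk}. Uniqueness among functions bounded on finite intervals is the usual renewal argument: the difference $d$ of two such solutions satisfies $d=d\star K$, hence $d=d\star K^{\ast n}$ for every $n$, and $K^{\ast n}([0,T])=\mathbb{P}(S_n\le T)\to0$ on each $[0,T]$ (where $S_n$ is a sum of $n$ i.i.d.\ variables with law $K$) since $K$ has no atom at the origin, so $d\equiv0$; that $\mathbb{E}[A(k,t)\mathcal{E}]$ is itself bounded on finite intervals follows from Cauchy--Schwarz and $\mathbb{E}[A(k,t)^2]\le\mathbb{E}[N_t^2]<\infty$.

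I expect the decomposition of the second paragraph to be the main obstacle: one must verify that the three classes of types are exhaustive and pairwise disjoint --- in particular that a type founded strictly inside a subtree never escapes it, so that it is faithfully recorded by $A^{(i)}$, while precisely the individuals of a subtree bearing its root's type are reassigned to $X_t$ or to the clonal class --- and that the grafted subtrees are genuine i.i.d.\ copies of the model, independent of $(V_\emptyset,\xi)$, despite the dependence of their root types on the ancestor's mutation history. A secondary, purely technical difficulty is the interchange of $\lim_{s\to\infty}$ and $\mathbb{E}$ in the $X_t$--term, where $X_t$ is correlated with the grafted subtrees; this requires a uniform integrability bound, provided by the second--moment estimates of the preceding subsection (themselves obtained, as in \cite{H1}, from \eqref{eq:Mom1akate}--\eqref{eq:Mom2akate} and Lemma~\ref{lem: asyComp}).
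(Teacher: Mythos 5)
Your proposal is correct and follows essentially the same route as the paper: the first-generation decomposition $A(k,t)=X_t+\int_{[0,t]}A(k,t-u,\xi_u)\mathds{1}_{V_\emptyset>u}\,\xi(du)$ (with no boundary term since the ancestor is clonal), followed by expanding $\mathbb{E}[A(k,t)N_s]$, renormalising by $W(s)$ and letting $s\to\infty$ exactly as in \cite[Lemma 6.1]{H1}, with the $X_t$-term handled via $\lim_{s\to\infty}\mathbb{E}[X_tN_s/W(s)]=\mathbb{E}[X_t\mathcal{E}]$. The paper's own proof is in fact terser than your write-up, deferring all of these details to \cite{H1}.
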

\begin{proof}
We recall that $A(k,t)$ is the number of non-ancestral families of size $k$ at time $t$. Similarly, as for $N_{t}$, $A(k,t)$ can be obtained as the sum of the contributions of all the trees grafted on the lifetime of the ancestor individual in addition to the mutations which take place on the ancestral branch, that is,
\[
A(k,t)=\int_{[0,t]}A(k,t-u,\xi_{u})\mathds{1}_{V_{\emptyset}>u}\xi(du)+X_{t},
\]  
where $\left(A(k,t,i), t\in\mathbb{R}_{+}\right)_{i\geq 1}$ is a family of independent processes having the same law as $A(k,t)$.
Now, taking the product $A(k,t)N_{s}$ and using the same arguments as in the proof of lemma \cite[Lemma 6.1]{H1} to take the limit in $s$ leads to the result.
In particular, the last term is obtained using that
\[
\lim\limits_{s\to\infty}\mathbb{E}\left[X_{t}\frac{N_{s}}{W(s)}\right]=\mathbb{E}\left[X_{t}\mathcal{E}\right].
\]
\end{proof}

The result of Lemma \ref{lem:jointMomAk} is quite disappointing since the presence of the mysterious process $X_{t}$ prevents any explicit resolution of equation \eqref{eq:renewAk}.
However, one may note that equation \eqref{eq:renewAk} is quite similar to equation \eqref{eq:renewNt} driving $\mathbb{E}N_{t}\mathcal{E}$, so if the contribution of $X_{t}$ in the renewal structure of the process is small enough, one can expect the same asymptotic behavior for $\mathbb{E}A(k,t)\mathcal{E}$ as for $\mathbb{E}N_{t}\mathcal{E}$. Moreover, we clearly have on $X_{t}$ the following a.s. estimate,
\begin{equation}
\label{eq:majXt}
X_{t}\leq \int_{[0,t]}\mathds{1}_{Z^{(u)}_{0}(t-u)>0}\mathds{1}_{V>u}\xi(du),
\end{equation}
where $Z^{(i)}_{0}$ denote for the ancestral families on the $i$th trees grafted on the ancestral branch. Hence, if we take $\theta>\alpha$ and we suppose $V<\infty$ a.s., one can expect that $X_{t}$ decreases very fast. These are the ideas the following Lemma is based on. Moreover, as it is seen in the proof of the following lemma, the hypothesis $V<\infty$ a.s. can be weakened.
\begin{lem}
\label{lem:akegnt}
Under the hypothesis of Theorem \ref{thm:tclAllelic}, for all $k\geq1$, there exists a constant $\gamma_{k}\in\mathbb{R}$ such that,
\begin{equation}
\lim\limits_{t\to\infty}\mathbb{E}N_{t}\mathcal{E}c_{k}-\mathbb{E}A(k,t)\mathcal{E}=\gamma_{k}.
\end{equation}
\end{lem}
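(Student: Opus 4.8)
The plan is to show that $f_k(t) := \mathbb{E}[N_t\mathcal{E}]c_k - \mathbb{E}[A(k,t)\mathcal{E}]$ converges as $t\to\infty$ by exhibiting it as the solution of a renewal equation whose driving term is summable. Subtracting $c_k$ times equation \eqref{eq:renewNt} from equation \eqref{eq:renewAk}, and using the a.s. convergence $e^{-\alpha t}A(k,t)\to c_k\mathcal{E}/\psi'(\alpha)$ (Theorem \ref{thm:ASCVakate}) together with the corresponding statement for $N_t$ (Theorem \ref{thm:ASCVNt}) to handle the convolution terms $\mathbb{E}[A(k,\cdot)]$ versus $c_k\mathbb{E}[N_{\cdot}]$, one sees that $f_k$ satisfies
\[
f_k(t) = \int_{\mathbb{R}_+} f_k(t-u)\, b e^{-\alpha u}\,\mathbb{P}(V>u)\,du + g_k(t),
\]
where the kernel $b e^{-\alpha u}\mathbb{P}(V>u)\,du$ is, by \eqref{eq:intalpha}, a probability measure (it integrates to $1-\int e^{-\alpha v}\mathbb{P}_V(dv) + \dots$; more precisely its mass is $1$ precisely because $\psi(\alpha)=0$), so this is a genuine renewal equation. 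The inhomogeneous term $g_k(t)$ collects: (i) $\alpha$ times the difference of convolution terms, $\alpha\, b\, (c_k\mathbb{E}[N_\cdot] - \mathbb{E}[A(k,\cdot)]) \star \big(\int e^{-\alpha v}\mathbb{P}(V>\cdot,V>v)\,dv\big)(t)$, and (ii) the ancestral term $-\alpha\,\mathbb{E}[\mathcal{E}X_t]$ coming from Lemma \ref{lem:jointMomAk}.

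The main work is to show $g_k$ is directly Riemann integrable and integrable on $\mathbb{R}_+$, so that the key renewal theorem (cited in the appendix) applies and yields $f_k(t)\to \frac{1}{\int_{\mathbb{R}_+} u\, b e^{-\alpha u}\mathbb{P}(V>u)\,du}\int_0^\infty g_k(u)\,du =: \gamma_k$. For term (ii) I would use the a.s. bound \eqref{eq:majXt}: $\mathbb{E}[\mathcal{E}X_t] \le \mathbb{E}\big[\mathcal{E}\int_{[0,t]}\mathds{1}_{Z_0^{(u)}(t-u)>0}\mathds{1}_{V>u}\,\xi(du)\big]$, and after a size-biasing/Campbell-type computation bound this by a constant times $\int_0^t \mathbb{P}(Z_0(t-u)>0)\,b e^{-\alpha u}\,\mathbb{P}(V>u)\,du$ (the $\mathcal{E}$ can be absorbed using that $\mathbb{E}[N_s \mathcal{E}]/W(s)$ stays bounded and the grafting decomposition). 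Now under the hypothesis $\theta>\alpha$, by Lemma \ref{lem:asyComp} $\mathbb{P}(Z_0(t-u)>0) = \frac{e^{-\theta(t-u)}W(t-u)}{W_\theta(t-u)^2}\cdot W_\theta(t-u)$-type quantity $\sim C e^{-(\theta-\alpha)(t-u)}$ as $t-u\to\infty$, which decays exponentially; combined with the integrability of $u\mapsto b e^{-\alpha u}\mathbb{P}(V>u)$ this forces $\mathbb{E}[\mathcal{E}X_t]$ to be integrable in $t$ and to vanish at infinity. This is where the supplementary hypothesis $\int_{[0,\infty)}e^{(\theta-\alpha)v}\mathbb{P}_V(dv)>1$ enters: it guarantees the relevant clonal-type process has the right supercriticality so that the moment bounds on $Z_0$ and on $\mathbb{E}[A(k,a)\mathds{1}_{Z_0(a)=l}]$-style quantities used in the size-biased estimate hold; without it one needs to control $\mathbb{E}[\mathcal{E}^2 X_t^2]$-type quantities and the exponential-tail argument for $X_t$ via $V<\infty$ a.s. as indicated after \eqref{eq:majXt}.

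For term (i), observe that $h_k(t):=c_k\mathbb{E}[N_t] - \mathbb{E}[A(k,t)]$ can be computed from \eqref{eq:NtNoCond}, \eqref{eq:Mom1akate} and the definition \eqref{eq:ck} of $c_k$: using Lemma \ref{lem:asyComp} and Lemma \ref{lem:WComp}, $e^{-\alpha t} h_k(t)$ converges, and in fact $h_k(t) = O(e^{\beta t})$ for some $\beta<\alpha$ after the leading $e^{\alpha t}$ terms cancel (because both $\mathbb{E}[N_t]$ and $\mathbb{E}_tA(k,t) = $ (via \eqref{eq:Mom1akate}) have the same leading exponential with coefficient matched exactly by $c_k$). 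Convolving the exponentially-controlled $h_k$ against the integrable kernel $\int e^{-\alpha v}\mathbb{P}(V>\cdot,V>v)\,dv$ (which itself decays; note $\int_{\mathbb{R}_+}\int_{\mathbb{R}_+}e^{-\alpha v}\mathbb{P}(V>t,V>v)\,dv\,dt<\infty$ when $\mathbb{E}V<\infty$, and is handled by the tail bound on $\mathbb{P}(V>t)$ in general via \eqref{eq:intalpha}) yields a function that is $O(e^{\alpha t})$ with an integrable \emph{renormalized} version, hence $g_k^{(i)}(t)$ itself is directly Riemann integrable. Assembling the two pieces gives $g_k\in L^1(\mathbb{R}_+)$ and d.R.i., and the renewal theorem delivers the constant $\gamma_k$.

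\textbf{Main obstacle.} The delicate point is term (ii), i.e.\ estimating $\mathbb{E}[\mathcal{E}X_t]$: one must simultaneously handle the correlation between the ancestral-mutation clonal families $Z_0^{(u)}$ and the limiting variable $\mathcal{E}$, and extract the exponential decay in $t$. I expect to do this by first conditioning on the splitting tree restricted to $[0,u]$ along the ancestral lineage, writing $\mathcal{E}$ via its grafting decomposition (an analogue of Lemma \ref{lem:dec}, but now along the ancestral branch rather than a time slice) so that the factor carrying the ancestral mutations decouples from the bulk of $\mathcal{E}$, and then invoking the moment formulas \eqref{eq:Mom1akate}--\eqref{eq:Mom2akate} together with Lemma \ref{lem:asyComp} to bound the expectation involving $\mathds{1}_{Z_0(t-u)>0}$. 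The hypotheses $\theta>\alpha$ and $\int e^{(\theta-\alpha)v}\mathbb{P}_V(dv)>1$ are exactly what make this clonal-subcritical-relative-to-$\alpha$ estimate go through with the required exponential rate.
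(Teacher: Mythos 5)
Your skeleton is exactly the paper's: subtract $c_k$ times \eqref{eq:renewNt} from \eqref{eq:renewAk}, recognize a renewal equation with kernel $be^{-\alpha u}\mathbb{P}(V>u)\,du$, show the driving term is DRI, and invoke Theorem \ref{thm:nonLatticeRen}. Your treatment of the ancestral term $\mathbb{E}[\mathcal{E}X_t]$ is also right in spirit, though the paper does it more cheaply: a single Cauchy--Schwarz gives $\mathbb{E}[X_t\mathcal{E}]\le \sqrt{2\alpha/b}\,\mathbb{E}\bigl[\bigl(\int_{[0,t]}\mathds{1}_{Z^{(u)}_{0}(t-u)>0}\mathds{1}_{V>t}\,\xi(du)\bigr)^{2}\bigr]^{1/2}$ via \eqref{eq:majXt}, the second moment of the Poisson integral is computed explicitly, and the decay then comes from $\mathbb{P}_{s}(Z_0(s)>0)=\mathcal{O}(e^{-(\theta-\alpha)s})$ alone --- no decoupling of $\mathcal{E}$ from the ancestral clonal families is needed.

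The genuine gap is in your term (i), and it is precisely where the supplementary hypothesis $\int_{[0,\infty)}e^{(\theta-\alpha)v}\mathbb{P}_V(dv)>1$ is actually consumed --- not in term (ii), as you claim. From \eqref{eq:momNt} and \eqref{eq:Mom1akate} one gets $c_k\mathbb{E}N_t-\mathbb{E}A(k,t)=\mathcal{O}(e^{(\alpha-\theta)t})$, a genuinely decaying bound; your ``$O(e^{\beta t})$ for some $\beta<\alpha$'' is too weak, since with $0<\beta<\alpha$ the convolution $\xi_1^{(k)}$ would not even tend to $0$. More importantly, integrability of $\xi_1^{(k)}$ is not enough: Theorem \ref{thm:nonLatticeRen} requires \emph{direct} Riemann integrability, and by Lemma \ref{lem:DRI} the natural route is to dominate $\xi_1^{(k)}$ by a nonnegative, integrable, eventually decreasing function. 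The dominating function produced by the convolution bound is $\frac{\mathcal{C}}{\alpha}e^{-(\theta-\alpha)t}\int_0^t e^{(\theta-\alpha)u}\mathbb{P}(V>u)\,du$, whose derivative equals $\frac{\mathcal{C}}{\alpha}e^{-(\theta-\alpha)t}\bigl(1-\int_{[0,t]}e^{(\theta-\alpha)s}\mathbb{P}_V(ds)\bigr)$ after a Stieltjes integration by parts; it is eventually negative exactly because $\int_{[0,\infty)}e^{(\theta-\alpha)s}\mathbb{P}_V(ds)>1$. Without this monotonicity step your inference ``integrable renormalized version, hence DRI'' does not go through, and your proposed use of the hypothesis (supercriticality of some clonal process in term (ii)) does not correspond to anything the argument needs.
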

\begin{proof}

Combining equations $\eqref{eq:renewNt}$ and $\eqref{eq:renewAk}$, we get that,
\begin{align*}
&\mathbb{E}N_{t}\mathcal{E}c_{k}-\mathbb{E}A(k,t)\mathcal{E}=\int_{\mathbb{R}_{+}}\left(\mathbb{E}N_{t-u}\mathcal{E}c_{k}-\mathbb{E}A(k,t-u)\mathcal{E} \right)be^{-\alpha u}\mathbb{P}\left(V>u\right)du\\&\quad+\underbrace{\alpha b\left(c_{k}\mathbb{E}N_{.}-\mathbb{E}\left[A(k,.)\right]\right)\star \left( \int_{\mathbb{R}_{+}}e^{-\alpha v}\mathbb{P}\left(V>.,V>v\right)dv\right)(t)}_{:=\xi^{(k)}_{1}(t)}\\&\quad\quad +\underbrace{c_{k}\mathbb{P}\left(V>t\right)-\alpha\mathbb{E}\left[X_{t}\mathcal{E}\right]}_{:=\xi^{(k)}_{2}(t)},
\end{align*}
which is also a renewal equation.
On one hand, using equations \eqref{eq:momNt} and \eqref{eq:Mom1akate} imply that
\[
\mathbb{E}_{t}\left[c_{k}N_{t}-A(k,t)\right]=W(t)\int_{t}^{\infty}\frac{\theta e^{-\theta s}}{W_{\theta}(s)^{2}}\left(1-\frac{1}{W_{\theta}(s)} \right)^{k-1}\ ds,
\]
which leads using Lemma \ref{lem: asyComp}, to
\begin{align}
\label{eq:inequality}
\xi_{1}(t)=&\alpha\int_{\mathbb{R}_{+}}\left(c_{k}\mathbb{E}N_{t-u}-\mathbb{E}\left[A(k,t-u)\right] \right)\int_{\mathbb{R}_{+}}e^{-\alpha v}\mathbb{P}\left(V>u,V>v \right)dvdu\notag\\
&\leq\mathcal{C}\int_{[0,t]}e^{(\alpha-\theta)t-u}\mathbb{P}\left(V>u\right)du\int_{[0,\infty)}e^{-\alpha u}du\notag\\
&\leq \frac{\mathcal{C}}{\alpha}e^{-\left(\theta-\alpha\right)t}\int_{0}^{t}e^{(\theta-\alpha )u}\mathbb{P}\left(V>u\right)du,
\end{align}
for some positive real constant $\mathcal{C}$.

The derivative of the r.h.s. of \eqref{eq:inequality} is given by
\begin{equation}
\label{eq:derivative}
\frac{\mathcal{C}}{\alpha}e^{-(\theta-\alpha) t}\left(e^{(\theta-\alpha) t}\mathbb{P}\left(V>t  \right)-(\alpha-\theta) \int_{0}^{t}e^{(\theta-\alpha )u}\mathbb{P}\left(V>u\right)du\right), \quad  t>0,
\end{equation}
which is equal to
\[
\frac{\mathcal{C}}{\alpha}e^{-(\theta-\alpha) t}\left(1-\int_{[0,t]}e^{(\theta-\alpha) s}\mathbb{P}_{V}(ds)\right), \quad t>0,
\]
using Stieljes integration by parts. Now, since,
\[
\int_{[0,\infty)}e^{\left(\theta-\alpha\right)s}\mathbb{P}_{V}(ds)>1,
\]
 this shows that the right hand side of \eqref{eq:inequality} is decreasing for $t$ large enough. Moreover, it is straightforward to shows that the r.h.s.\ of \eqref{eq:inequality} is also integrable.
 This implies that $\xi_{1}^{(k)}$ is DRI from the same Lemma.
On the other hand, it follows from \eqref{eq:majXt} that
\begin{equation}
X_{t}\mathcal{E}\leq \mathcal{E}\int_{[0,t]}\mathds{1}_{Z^{(u)}_{0}(t-u)>0}\mathds{1}_{V>t}\xi(du).
\end{equation}

Then, we obtain using Cauchy-Schwarz inequality, that
\[
\mathbb{E}\left[X_{t}\mathcal{E}\right]\leq \sqrt{\frac{2\alpha}{b}}\mathbb{E}\left[\left(\int_{[0,t]}\mathds{1}_{Z^{(u)}_{0}(t-u)>0}\mathds{1}_{V>t}\xi(du) \right)^{2}\right]^{1/2}.
\]
It follows that we need to investigate the behavior of
\begin{equation*}
\mathbb{E}\left[\left(\int_{(0,t)}\mathds{1}_{Z^{(u)}_{0}(t-u)>0}\mathds{1}_{V>t}\xi(du) \right)^{2}\right],
\end{equation*}
which is equal to
\begin{equation*}
\int_{0}^{t}\mathbb{P}\left(Z_{0}(t-u)>0 \right)\mathbb{P}\left(V>t\right) bdu+ \int_{[0,t]^{2}}\mathbb{P}\left(Z_{0}(t-v)>0 \right)\mathbb{P}\left(Z_{0}(t-u)>0 \right)\mathbb{P}\left(V>u,V>v\right) b^{2}du\ dv,
\end{equation*}
using \cite[Lemma 2.6]{H1}.
Then, since, from \eqref{eq:loizzero} and Lemma \ref{lem: asyComp},
\[
\mathbb{P}_{t-u}\left(Z_{0}(t-u)>0\right)=\frac{e^{-\theta (t-u)}W(t-u)}{W_{\theta}(t-u)}=\mathcal{O}(e^{-(\theta-\alpha)(t-u)}),
\]
it follows, using that the right hand side of \eqref{eq:inequality} is DRI and Lemma \ref{lem:DRI}, that $\xi^{(k)}_{2}$ is DRI.
Finally, it comes from Theorem \ref{thm:nonLatticeRen}, that
\begin{equation}
\label{eq:AsNtAkate}
\lim\limits_{t\to\infty}\mathbb{E}N_{t}\mathcal{E}c_{k}-\mathbb{E}A(k,t)\mathcal{E}=\frac{\alpha}{\psi'(\alpha)}\int_{\mathbb{R}_{+}}\xi_{1}^{(k)}(s)+\xi^{(k)}_{2}(s)ds.
\end{equation}
\end{proof}
Using the preceding lemma, we can now get the quadratic error in the convergence of the frequency spectrum.
\begin{lem}[Quadratic error for the convergence of $A(k,t)$.]
\label{lem:quadConv}
Let $k$ and $l$ two positive integers.
Then under the hypothesis of Theorem \ref{thm:tclAllelic}, there exists a family of real numbers $\left(a_{k,l}\right)_{l,k\geq 1}$ such that,  
\[
\lim\limits_{t\to\infty}e^{-\alpha t}\mathbb{E}\left[\left(\psi'(\alpha)A(k,t)-e^{\alpha t}\mathcal{E}c_{k}\right)\left(\psi'(\alpha)A(l,t)-e^{\alpha t}\mathcal{E}c_{l}\right)\right]=\frac{\alpha}{b}a_{k,l},
\]
where the sequence $\left(c_{k} \right)_{k\geq 1}$ is defined by \eqref{eq:ck}.
\end{lem}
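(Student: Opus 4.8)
The plan is to mimic the computation of the quadratic error for $N_t$ carried out in \cite[Section 6]{H1}, with the joint moment $\mathbb{E}[A(k,t)\mathcal{E}]$ (analysed in Lemma \ref{lem:akegnt}) playing the role that $\mathbb{E}[N_t\mathcal{E}]$ played there. First I would expand the product,
\begin{align*}
\mathbb{E}\left[\left(\psi'(\alpha)A(k,t)-e^{\alpha t}\mathcal{E}c_{k}\right)\left(\psi'(\alpha)A(l,t)-e^{\alpha t}\mathcal{E}c_{l}\right)\right]
&=\psi'(\alpha)^{2}\mathbb{E}[A(k,t)A(l,t)]\\
&\quad-\psi'(\alpha)e^{\alpha t}\left(c_{l}\mathbb{E}[A(k,t)\mathcal{E}]+c_{k}\mathbb{E}[A(l,t)\mathcal{E}]\right)\\
&\quad+c_{k}c_{l}e^{2\alpha t}\mathbb{E}[\mathcal{E}^{2}].
\end{align*}
The third term is known exactly ($\mathbb{E}[\mathcal{E}^2]=2$ under $\mathbb{P}_\infty$, and $\mathbb{E}\mathcal{E}^2=2\alpha/b$ unconditionally). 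For the second term I would substitute the asymptotics $\mathbb{E}[A(k,t)\mathcal{E}]=c_k\mathbb{E}[N_t\mathcal{E}]-\gamma_k+o(1)$ from Lemma \ref{lem:akegnt} together with the explicit formula \eqref{eq:eqeq} for $\mathbb{E}[N_t\mathcal{E}]$ and the scale-function expansion of Lemma \ref{lem:WComp}; this produces a term of exact order $e^{2\alpha t}$ (which must cancel against the other two) plus a term of order $e^{\alpha t}$ whose coefficient involves $c_k\gamma_l+c_l\gamma_k$ and the subleading constants in $W$, plus $o(e^{\alpha t})$.

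The real work is the first term, $\mathbb{E}[A(k,t)A(l,t)]$, for which the exact expression \eqref{eq:Mom2akate} is available. I would plug in the asymptotics of $W$, $W_\theta$ from Lemmas \ref{lem: asyComp} and \ref{lem:WComp} into each of the five lines of \eqref{eq:Mom2akate}. The dominant $2W(t)^2(\cdots)(\cdots)$ term contributes $2c_kc_l\psi'(\alpha)^{-2}e^{2\alpha t}$ at leading order, since $\int_0^t \theta e^{-\theta s}W_\theta(s)^{-2}(1-W_\theta(s)^{-1})^{k-1}\,ds \to c_k/\psi'(\alpha)$ wait — more precisely it equals $\mathbb{E}_t A(k,t)/W(t)$, so one tracks its expansion to order $e^{-\alpha t}$. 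The key point is that the $e^{2\alpha t}$ contributions from this line, from the second term of the expansion above, and from the $c_kc_le^{2\alpha t}\mathbb{E}\mathcal{E}^2$ line must cancel identically (this is forced by the $L^2$ convergence in Theorem \ref{thm:ASCVakate}); verifying this cancellation is the first consistency check. What remains after multiplying by $e^{-\alpha t}$ and letting $t\to\infty$ is a finite limit: the lines three through five of \eqref{eq:Mom2akate}, being $W(t)$ times convergent or DRI-type integrals, each contribute a term of order $e^{\alpha t}$, and I would use the asymptotic identities \eqref{eq:survie}, \eqref{eq:AsNtE} and the explicit formulas referenced after Theorem \ref{thm:tclexp} (Prop. 4.5 of \cite{CH}, Prop. 4.1 of \cite{CL1}) to evaluate $\mathbb{E}[A(k,t)\mathds{1}_{Z_0(a)=l}]$-type quantities. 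Collecting everything defines $a_{k,l}$ as an explicit (if lengthy) combination of convergent integrals; one then reads off that the limit equals $\frac{\alpha}{b}a_{k,l}$ by factoring out $\mathbb{P}(\text{NonEx})=\alpha/b$ from \eqref{eq:Prnonex}.

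The main obstacle is bookkeeping: \eqref{eq:Mom2akate} has several terms that are individually of order $e^{2\alpha t}$, and one must expand $W$, $W_\theta$ to second order uniformly enough (using the $\mathcal{O}(e^{-\gamma t})$ control of Lemma \ref{lem: asyComp} and the explicit $F$ of Lemma \ref{lem:WComp}) so that the leading terms cancel and the $e^{\alpha t}$ coefficient is captured correctly; dominated-convergence / DRI arguments (as in the proof of Lemma \ref{lem:akegnt}, via Lemma \ref{lem:DRI}) are needed to justify passing the limit inside the various integrals, and the hypothesis $\int e^{(\theta-\alpha)v}\mathbb{P}_V(dv)>1$ enters exactly there to guarantee integrability of the tail terms. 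A secondary subtlety is that the statement is under $\mathbb{P}_\infty$ while \eqref{eq:Mom2akate} is written under $\mathbb{P}$; since $\mathbb{P}(N_t>0)\to\alpha/b$ and the random variables inside are supported on $\{N_t>0\}$ asymptotically, dividing by $\mathbb{P}(N_t>0)$ only affects the final constant, which is why the answer comes out as $\frac{\alpha}{b}a_{k,l}$ rather than $a_{k,l}$.
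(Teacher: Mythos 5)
Your proposal follows essentially the same route as the paper: expand the product into $\psi'(\alpha)^2\mathbb{E}[A(k,t)A(l,t)]$, the cross terms $\mathbb{E}[A(\cdot,t)\mathcal{E}]$, and $\mathbb{E}[\mathcal{E}^2]$, then feed in the exact second-moment formula \eqref{eq:Mom2akate} with the expansions of $W$ and $W_\theta$, the asymptotics of $\mathbb{E}[N_t\mathcal{E}]$ from \eqref{eq:eqeq}, and Lemma \ref{lem:akegnt} for the $\gamma_k$ corrections, so that the $e^{2\alpha t}$ terms cancel and the $e^{\alpha t}$ coefficient defines $a_{k,l}$, with the factor $\alpha/b$ coming from the survival probability. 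This matches the paper's proof in all essentials (the paper merely regroups the cross terms as $\mathbb{E}_t[(c_kN_t-A(k,t))\mathcal{E}]$ and works under $\mathbb{P}_t$ before converting), so the plan is sound.
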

\begin{proof}

Now, noting
\begin{equation}
\label{eq:ckt}
c_{k}(t):=\int_{0}^{t}\frac{\theta e^{-\theta a}}{W_{\theta}(a)^{2}}\left(1-\frac{1}{W_{\theta}(a)} \right)^{k-1}da,
\end{equation}
we have, from  \eqref{eq:Mom2akate} and Lemma \ref{lem:WComp},
\begin{equation}
\label{eq:R}
\psi'(\alpha)^{2}\mathbb{E}_{t}\left[A(k,t)A(l,t)\right]=2e^{2\alpha t}c_{k}(t)c_{l}(t)+e^{\alpha t}\left(4\psi'(\alpha)e^{\alpha t}F(t)c_{k}(t)c_{l}(t)+\frac{R}{\psi'(\alpha)} \right)+\mathcal{O}\left(1\right),
\end{equation}
with
\begin{align*}
R:=&-\psi'(\alpha)\int_{0}^{\infty}2\theta \ \frac{e^{-\theta a}W(a)}{W_{\theta}(a)^{2}}\left(1-\frac{1}{W_{\theta}(a)} \right)^{l-1}\int_{0}^{a}\frac{e^{-\theta s}}{W_{\theta}(s)^{2}}\left(1-\frac{1}{W_{\theta}(a)} \right)^{k-1}dsda\\&-\psi'(\alpha)\int_{0}^{\infty}2\theta \ \frac{e^{-\theta a}W(a)}{W_{\theta}(a)^{2}}\left(1-\frac{1}{W_{\theta}(a)} \right)^{k-1}\int_{0}^{a}\frac{e^{-\theta s}}{W_{\theta}(s)^{2}}\left(1-\frac{1}{W_{\theta}(a)} \right)^{l-1}dsda\\
&+\psi'(\alpha)\int_{0}^{\infty}\theta W(a)^{-1}\left(\mathbb{E}_{t}\left[A(k,t)\mathds{1}_{Z_{0}(a)=l} \right]+\mathbb{E}_{t}\left[A(l,t)\mathds{1}_{Z_{0}(a)=k} \right]\right)da,
\end{align*}
and $F$, $\mu$ are defined in Lemma \ref{lem:WComp}.
Now, using \eqref{eq:survie}, we have
\[
\mathbb{E}_{t}\mathcal{E}^{2}-2=-2\mu\psi'(\alpha)e^{-\alpha t}+o(e^{-\alpha t}),
\]
which leads to
\begin{align*}
\mathbb{E}_{t}&\left[\left(e^{-\alpha t}\psi'(\alpha)A(k,t)-\mathcal{E}c_{k}\right)\left(e^{-\alpha t}\psi'(\alpha)A(l,t)-\mathcal{E}c_{l}\right)\right]\\
=&\mathbb{E}_{t}\left[e^{-2\alpha t}\psi'(\alpha)^{2}A(k,t)A(l,t)\right]-c_{l}\mathbb{E}_{t}\left[e^{-\alpha t}\psi'(\alpha)A(k,t)\mathcal{E}\right]-c_{k}\mathbb{E}_{t}\left[e^{-\alpha t}\psi'(\alpha)A(l,t)\mathcal{E}\right]\\
&+2c_{k}c_{l}-2c_{k}c_{l}\mu\psi'(\alpha)e^{-\alpha t}+o(e^{-\alpha t}),\\
=&2\left(c_{k}(t)-c_{k} \right)\left(c_{l}(t)-c_{l}\right)-4\mu\psi'(\alpha)c_{k}c_{l}e^{-\alpha t}+Re^{-\alpha t}\\&-\left(2c_{k}(t)c_{l}+2c_{l}(t)c_{k}-2c_{k}c_{l}\psi'(\alpha)e^{-\alpha t}\mathbb{E}_{t}N_{t}\mathcal{E} \right)\\&+\psi'(\alpha)c_{l}e^{-\alpha t}\mathbb{E}_{t}\left[\left(c_{k}N_{t}-A(k,t)\right)\mathcal{E}\right]+\psi'(\alpha)c_{k}e^{-\alpha t}\mathbb{E}_{t}\left[\left(c_{l}N_{t}-A(l,t)\right)\mathcal{E}\right]+o(e^{-\alpha t}),\\
\end{align*}
Since, by Lemma \ref{lem: asyComp}
\[
c_{k}(t)=c_{k}+\mathcal{O}(e^{-\theta t})=c_{k}+o(e^{-\alpha t}),
\]
it follows, combining \eqref{eq:AsNtAkate}, \eqref{eq:AsNtE}, and Lemma \ref{lem:akegnt},  that
\begin{align*}
e^{\alpha t}\mathbb{E}_{t}&\left[\left(e^{-\alpha t}\psi'(\alpha)A(k,t)-\mathcal{E}c_{k}\right)\left(e^{-\alpha t}\psi'(\alpha)A(l,t)-\mathcal{E}c_{l}\right)\right]
\\=&
 \psi'(\alpha)\left(c_{k}\gamma_{l}+ c_{l}\gamma_{k}\right)+c_{k}c_{l}\left(2e^{\alpha t}-2\psi'(\alpha)\mathbb{E}_{t}N_{t}\mathcal{E} \right)+R-4\mu\psi'(\alpha)c_{k}c_{l}+o(1)\\
=&
\psi'(\alpha)\left(c_{k}\gamma_{l}+ c_{l}\gamma_{k}\right)+c_{k}c_{l}\left(\frac{1}{\psi'(\alpha)}+3\mu \right)+R-4\mu\psi'(\alpha)c_{k}c_{l}+o(1).\\
\end{align*}
The result follows readily from the fact that $\mathbb{P}\left(N_{t}>0\right)\sim\frac{\alpha}{b}$.

\end{proof}
\begin{lem}[Boundedness of the third moment]
\label{lem:aktBound}
Let $k_{1},k_{2},k_{3}$ three positive integers, then
\[
\mathbb{E}\left[\prod_{i=1}^{3}\left|e^{-\frac{\alpha}{2} t}\left(\psi'(\alpha)A(k_{i},t)-e^{\alpha t}\mathcal{E}c_{k_{i}}\right) \right| \right]=\mathcal{O}\left(1\right).
\]
\end{lem}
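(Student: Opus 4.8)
The plan is to reduce the statement, via the generalized H\"older inequality, to a single‑type third‑moment bound, and then to obtain that bound from a defective renewal inequality for third moments, in the spirit of the argument for $(N_t)$ in \cite{H1}. Writing $U_j(t):=\psi'(\alpha)A(j,t)-e^{\alpha t}c_j\mathcal E$ and $W_j(t):=e^{-\alpha t/2}U_j(t)$, H\"older gives $\mathbb E\big[\prod_{i=1}^{3}|W_{k_i}(t)|\big]\le\prod_{i=1}^{3}\big(\mathbb E|W_{k_i}(t)|^{3}\big)^{1/3}$, so it suffices to prove $\mathbb E|W_k(t)|^{3}=\mathcal O(1)$ for each fixed $k$.

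First I would decompose $U_k$ over the reproduction events of the ancestor, exactly as in \eqref{eq:decomposition} and in the proof of Lemma \ref{lem:jointMomAk}: from $A(k,t)=\int_{[0,t]}A(k,t-u,\xi_u)\mathds 1_{V_\emptyset>u}\,\xi(du)+X_t$ together with the analogue of Lemma \ref{lem:dec} for this decomposition, $\mathcal E=\int_{[0,\infty)}e^{-\alpha u}\mathcal E^{(\xi_u)}\mathds 1_{V_\emptyset>u}\,\xi(du)$ with $\big(A(k,\cdot,i),\mathcal E^{(i)}\big)_{i\ge1}$ i.i.d.\ copies of $\big(A(k,\cdot),\mathcal E\big)$ independent of $(\xi,V_\emptyset)$, one obtains
\[
W_k(t)=\int_{[0,t]}e^{-\alpha u/2}W_k^{(\xi_u)}(t-u)\,\mathds 1_{V_\emptyset>u}\,\xi(du)+e^{-\alpha t/2}\rho_k(t),
\]
where $\rho_k(t):=\psi'(\alpha)X_t-e^{\alpha t}c_k\int_{(t,\infty)}e^{-\alpha u}\mathcal E^{(\xi_u)}\mathds 1_{V_\emptyset>u}\xi(du)$ gathers the families founded on the ancestral branch and the reproductions occurring after time $t$. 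Using \eqref{eq:majXt}, the estimate $\mathbb P(Z_0(s)>0)=\mathcal O(e^{-(\theta-\alpha)s})$ coming from \eqref{eq:loizzero} and Lemma \ref{lem: asyComp} (this is where $\theta>\alpha$ enters), and the moment formulas for Poisson integrals, one checks $\mathbb E|\rho_k(t)|^{3}=\mathcal O(1)$, so that the last term contributes only $\mathcal O(e^{-3\alpha t/2})$ to $\mathbb E|W_k(t)|^{3}$.

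Then I would control the main integral by conditioning on $V_\emptyset$ and on the reproduction times $(u_j)$ of the ancestor: given these, the (finitely many) summands $\zeta_j:=e^{-\alpha u_j/2}\big(W_k^{(u_j)}(t-u_j)-\mathbb E W_k(t-u_j)\big)$, $u_j\le t\wedge V_\emptyset$, are independent and centred, with $\mathbb E[\zeta_j^{2}]=e^{-\alpha u_j}\operatorname{Var}W_k(t-u_j)\le Ce^{-\alpha u_j}$ and $|\mathbb E W_k(s)|\le(\mathbb E W_k(s)^{2})^{1/2}=\mathcal O(1)$, both by Lemma \ref{lem:quadConv} (taken with $l=k$). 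Combining the elementary inequality $\mathbb E|\sum_j\zeta_j|^{3}\le\sum_j\mathbb E|\zeta_j|^{3}+\big(\sum_j\mathbb E\zeta_j^{2}\big)\big(\sum_l\mathbb E|\zeta_l|\big)$ (from $|x|^{3}\le x^{2}\sum_l|x_l|$, independence and centring), the inequality $(a+b)^{3}\le(1+\varepsilon)a^{3}+C_\varepsilon b^{3}$ to peel off $\mathbb E W_k$, the drift $\sum_{j}e^{-\alpha u_j/2}\mathbb E W_k(t-u_j)$ and $\rho_k$ (each having bounded third moment), and the Mecke/Campbell formulas to integrate over the point process, one arrives at an inequality of the form
\[
h(t)\le q\,b\int_0^{t}e^{-3\alpha u/2}\,\mathbb P(V>u)\,h(t-u)\,du+C_0,\qquad h(t):=\mathbb E|W_k(t)|^{3},
\]
with $C_0<\infty$ and $q=q(\varepsilon)\downarrow1$ as $\varepsilon\downarrow0$. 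The decisive point is that, setting $\phi(\lambda):=b\int_0^\infty e^{-\lambda u}\mathbb P(V>u)\,du=1-\psi(\lambda)/\lambda$, one has $\phi(\alpha)=1$ (equivalently \eqref{eq:intalpha}) and $\phi$ is strictly decreasing, hence $\phi(3\alpha/2)<1$; choosing $\varepsilon$ so small that $q\,\phi(3\alpha/2)<1$ makes the kernel strictly defective. Since $0\le A(k,t)\le N_t$ and $N_t,\mathcal E$ have finite third moments, $h$ is finite and bounded on compacts, and taking $\sup_{t\le T}$ in the displayed inequality gives $\sup_{t\ge0}h(t)\le C_0/(1-q\phi(3\alpha/2))<\infty$, which is the desired bound.

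The main obstacle I anticipate is twofold. First, establishing $\mathbb E|\rho_k(t)|^{3}=\mathcal O(1)$ genuinely needs the clonal subcriticality $\theta>\alpha$, by an argument close to the one used for Lemma \ref{lem:akegnt}, since the process $X_t$ admits no closed form and must be handled through the bound \eqref{eq:majXt}. Second, one must keep careful track of the multiplicative constants generated when separating means and remainders, so that the renewal kernel stays strictly defective; the room to do so comes entirely from the fact that the relevant exponent $3\alpha/2$ is strictly larger than the critical exponent $\alpha$ of \eqref{eq:intalpha}. Apart from these points, the computation is a routine adaptation of \cite[Section~4]{H1}.
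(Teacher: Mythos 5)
Your proposal is correct in outline but takes a genuinely different route from the paper's. The paper first splits $\psi'(\alpha)A(k,t)-e^{\alpha t}c_k\mathcal{E}$ into $\psi'(\alpha)\left(A(k,t)-c_kN_t\right)$ plus terms involving only $N_t$ and $\mathcal{E}$, imports the third-moment bounds for the latter from \cite{H1}, and controls the new piece by an explicit computation of the \emph{fourth} moment $\mathbb{E}_t\left[\left(A(k,t)-c_kN_t\right)^4\right]=\mathcal{O}\left(W(t)^2\right)$ (Lemmas \ref{lem:fourthMoment} and \ref{lem: thridBounded}, proved in the appendix via the coalescent point process and formula \eqref{eq:recFormula}) followed by H\"older. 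You instead close a defective renewal inequality for $h(t)=\mathbb{E}|W_k(t)|^3$ directly, via the decomposition over the ancestor's offspring, a Rosenthal-type bound for sums of independent centred variables, and the observation that the kernel mass $b\int_0^\infty e^{-3\alpha u/2}\mathbb{P}(V>u)\,du=1-\tfrac{\psi(3\alpha/2)}{3\alpha/2}<1$ since $3\alpha/2$ lies strictly beyond the largest root of $\psi$. Your route is softer: it needs no fourth moment and no explicit CPP moment formulas, only the second-moment bound of Lemma \ref{lem:quadConv} and the $\mathcal{O}\left(e^{-(\theta-\alpha)s}\right)$ decay of $\mathbb{P}\left(Z_0(s)>0\right)$; it also re-proves the $N_t$-part rather than citing \cite{H1}, and would extend to any moment of order $p>2$ by the same mechanism. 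What it costs is exactly the bookkeeping you flag: each use of $(a+b)^3\le(1+\varepsilon)a^3+C_\varepsilon b^3$ inflates the kernel, so the margin $\phi(3\alpha/2)<1$ must absorb all the accumulated factors, and the a priori local boundedness of $h$ has to be checked before the supremum argument — whereas the paper's heavier computation yields closed formulas that are reused elsewhere. I see no gap in your plan; the two obstacles you identify (the $\mathcal{O}(1)$ bound on $\mathbb{E}|\rho_k(t)|^3$ through \eqref{eq:majXt}, and keeping the kernel strictly defective) are precisely the steps that need to be written out in full.
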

\begin{proof}
We have,

\[
\mathbb{E}\left[\left|\prod_{i=1}^{3}\frac{\left(\psi'(\alpha)A(k_{i},t)-e^{\alpha t}\mathcal{E}c_{k_{i}}\right)}{e^{\frac{\alpha}{2} t}} \right| \right]\leq \prod_{i=1}^{3}\left(\mathbb{E}\left[\left|\frac{\left(\psi'(\alpha)A(k_{i},t)-e^{\alpha t}\mathcal{E}c_{k_{i}}\right)}{e^{\frac{\alpha}{2} t}} \right|^{3} \right]\right)^{\frac{1}{3}}.
\]
Hence, we only have to prove the Lemma for $k_{1}=k_{2}=k_{3}=k$.
Hence,
\begin{align*}
\mathbb{E}\left[\left|\frac{\left(\psi'(\alpha)A(k,t)-e^{\alpha t}\mathcal{E}c_{k}\right)}{e^{\frac{\alpha}{2} t}} \right|^{3} \right]&\leq8\mathbb{E}\left[\left|\frac{\psi'(\alpha)A(k,t)-c_{k}N_{t}}{e^{\frac{\alpha}{2} t}} \right|^{3} \right]+ 8c_{k}\mathbb{E}\left[\left|\frac{\psi'(\alpha)N_{t}-N^{\infty}_{t}}{e^{\frac{\alpha}{2} t}} \right|^{3} \right]\\&\qquad\qquad\qquad\qquad\qquad\qquad\qquad\qquad\qquad+8c_{k}\mathbb{E}\left[\left|\frac{N^{\infty}_{t}-e^{\alpha t}\mathcal{E}}{e^{\frac{\alpha}{2} t}} \right|^{3} \right].
\end{align*}
The last two terms have been treated in the proof of \cite[Lemma 6.4]{H1}, and the boundedness of
\[
\mathbb{E}\left[\left|\frac{\psi'(\alpha)A(k,t)-c_{k}N_{t}}{e^{\frac{\alpha}{2} t}} \right|^{3} \right],
\]
follows from the following Lemma \ref{lem: thridBounded} and H\"older's inequality.

\end{proof}


\begin{lem}
\label{lem: thridBounded}For all $k\geq 1$,
\[
\mathbb{E}\left[\left(\frac{A(k,t)-c_{k}N_{t}}{e^{-\frac{\alpha}{2}t}}\right)^{4} \right],
\]
is bounded.
\end{lem}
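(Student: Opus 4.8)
The plan is to run the fourth–moment estimate of \cite[Lemma 6.4]{H1} (established there for $N_t$), the only genuinely new feature being a boundary term coming from the mutations borne by the ancestral branch, which is harmless precisely because $\theta>\alpha$ makes the clonal trees subcritical. Put $D(k,t):=A(k,t)-c_kN_t$ and $\phi(t):=e^{-2\alpha t}\,\mathbb{E}\!\left[D(k,t)^4\right]$; since $kA(k,t)\le N_t$ and $N_t$ is stochastically dominated by a geometric variable, $\phi$ is finite and locally bounded, and the statement amounts to $\sup_t\phi(t)<\infty$. Exactly as $N_t$ in \eqref{eq:decomposition}, and as in the proof of Lemma \ref{lem:jointMomAk}, there is a family $(A^{(i)}(k,\cdot),N^{(i)}_{\cdot})_{i\ge1}$ of i.i.d.\ copies of $(A(k,\cdot),N_\cdot)$, independent of the Poissonian birth measure $\xi$ on the ancestral lifetime and of $V_\emptyset$, such that, writing $D^{(i)}(k,r):=A^{(i)}(k,r)-c_kN^{(i)}_r$,
\[
D(k,t)=I_t+B_t,\qquad I_t:=\int_{[0,t]}D^{(\xi_u)}(k,t-u)\,\mathds{1}_{V_\emptyset>u}\,\xi(du),\qquad B_t:=X_t-c_k\mathds{1}_{V_\emptyset>t},
\]
with $X_t$ the process of Lemma \ref{lem:jointMomAk}.

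First I would dispose of $B_t$. By \eqref{eq:majXt} and $\mathds{1}_{V_\emptyset>t}\le1$, $|B_t|\le C_0+\int_{[0,t]}\mathds{1}_{Z^{(u)}_0(t-u)>0}\,\mathds{1}_{V_\emptyset>u}\,\xi(du)$ for some constant $C_0$, so up to $C_0$ the term $B_t$ is the integral of a $\{0,1\}$–valued functional against a marked Poisson process, and its moments of order $\le4$ are controlled by the cumulants $\int_0^t\mathbb{P}(Z_0(t-u)>0)\,b\,\mathbb{P}(V>u)\,du$. By \eqref{eq:loizzero} and Lemma \ref{lem: asyComp} one has $\mathbb{P}(Z_0(r)>0)\le e^{-\theta r}W(r)/W_\theta(r)=\mathcal{O}(e^{-(\theta-\alpha)r})$ since $\theta>\alpha$, hence these cumulants are bounded uniformly in $t$ and $\sup_t\mathbb{E}[|B_t|^p]<\infty$ for $p\le4$. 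By Hölder, every term of the expansion of $\mathbb{E}[(I_t+B_t)^4]$ other than $\mathbb{E}[I_t^4]$ is $\mathcal{O}(\mathbb{E}[I_t^4]^{3/4})+\mathcal{O}(1)$, so it remains to obtain a renewal inequality for $\mathbb{E}[I_t^4]$.

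Conditionally on $V_\emptyset=v$, $I_t$ is a Poisson integral with i.i.d.\ marks, so $\mathbb{E}[I_t^4]$ is obtained by averaging over $V_\emptyset$ the quartic expression $\kappa_4(v)+3\kappa_2(v)^2+6\kappa_2(v)\kappa_1(v)^2+4\kappa_3(v)\kappa_1(v)+\kappa_1(v)^4$ in the conditional cumulants $\kappa_j(v)=b\int_0^{t\wedge v}\mathbb{E}[D(k,t-u)^j]\,du$. The diagonal term averages to $b\int_0^t\mathbb{E}[D(k,t-u)^4]\,\mathbb{P}(V>u)\,du=e^{2\alpha t}\int_0^t\phi(t-u)\,m(du)$, with $m(du):=e^{-2\alpha u}b\,\mathbb{P}(V>u)\,du$. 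The other terms I would control using three a priori estimates already available: $|\mathbb{E}[D(k,r)]|=\mathcal{O}(e^{-(\theta-\alpha)r})$ (read off from the proof of Lemma \ref{lem:akegnt}), which gives $\sup_v|\kappa_1(v)|<\infty$; $\mathbb{E}[D(k,r)^2]=\mathcal{O}(e^{\alpha r})$ (from Lemma \ref{lem:quadConv} for $k=l$ together with the $L^2$ bound $\mathbb{E}[(\psi'(\alpha)N_r-e^{\alpha r}\mathcal{E})^2]=\mathcal{O}(e^{\alpha r})$ of \cite{H1}); and $\mathbb{E}[|D(k,r)|^3]\le\mathbb{E}[D(k,r)^2]^{1/2}\mathbb{E}[D(k,r)^4]^{1/2}=\mathcal{O}(e^{3\alpha r/2}\phi(r)^{1/2})$. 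Using the elementary identities $\int_0^\infty e^{-\alpha u}b\,\mathbb{P}(V>u)\,du=1$ and $e^{-3\alpha u/2}\le e^{-\alpha u}$, a short computation bounds the sum of the non-diagonal terms, and of the $I_t$–$B_t$ cross terms, by $C\,e^{2\alpha t}+\varepsilon(t)\,e^{2\alpha t}\,\Phi(t)$, where $\Phi(t):=\sup_{s\le t}\phi(s)$, $C$ is a constant and $\varepsilon(t)\to0$; after dividing by $e^{2\alpha t}$,
\[
\phi(t)\ \le\ \int_0^t\phi(t-u)\,m(du)\ +\ C\ +\ \varepsilon(t)\,\Phi(t).
\]

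To close the recursion, note that by Fubini and \eqref{eq:intalpha}, together with the convexity inequality $\mathbb{E}[e^{-2\alpha V}]\ge(\mathbb{E}[e^{-\alpha V}])^2$,
\[
\|m\|=\frac{b}{2\alpha}\big(1-\mathbb{E}[e^{-2\alpha V}]\big)\le\frac{b}{2\alpha}\Big(1-\big(1-\tfrac{\alpha}{b}\big)^2\Big)=1-\frac{\alpha}{2b}<1,
\]
so $\int_0^t\phi(t-u)\,m(du)\le(1-\tfrac{\alpha}{2b})\Phi(t)$; choosing $T$ such that $\varepsilon(t)\le\alpha/(4b)$ for $t\ge T$, the inequality above gives $\phi(t)\le(1-\tfrac{\alpha}{4b})\Phi(t)+C$ for $t\ge T$, whence $\Phi(t)\le\max(\Phi(T),4bC/\alpha)$ for all $t$, which is the claim. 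The main obstacle is the boundary term: one has to make the decomposition $D(k,t)=I_t+B_t$ rigorous — that is, to identify $X_t$ and justify \eqref{eq:majXt}, which amounts to keeping track of how the clonal families of the trees grafted on the ancestral branch are re-classified as non-clonal families of the whole tree — and then to exploit subcriticality of the clonal process ($\theta>\alpha$) so that $B_t$ only costs an $\mathcal{O}(1)$ amount; once this is in place, the renewal step for $\phi$ is routine, its one quantitative point being that the kernel $m$ has total mass $<1$, which is \eqref{eq:intalpha} together with convexity.
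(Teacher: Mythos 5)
Your argument is correct, but it proves the lemma by a genuinely different route than the paper. The paper first derives an \emph{exact} expression for $\mathbb{E}_t\left[\left(A(k,t)-c_kN_t\right)^4\right]$ (Lemma \ref{lem:fourthMoment}) by expanding the fourth power through the coalescent-point-process grafting identity \eqref{eq:recFormula} and the expectation formula of Theorem 3.1 of \cite{CH}; it then checks term by term that everything is $\mathcal{O}(W(t)^2)$, which requires the auxiliary estimates of Lemma \ref{lem:CompAs} (including a separate exact computation for the third moment) and, crucially, explicit cancellations between the $W(t)^4$ and $W(t)^3$ contributions, which are individually too large. You instead decompose over the ancestral individual's lifetime, as in \eqref{eq:decomposition} and Lemma \ref{lem:jointMomAk}, and turn the fourth moment into a renewal \emph{inequality} whose kernel $e^{-2\alpha u}b\,\mathbb{P}(V>u)\,du$ has total mass strictly less than $1$ (your bound $1-\frac{\alpha}{2b}$ via \eqref{eq:intalpha} and Jensen is fine; strict domination by the probability kernel $e^{-\alpha u}b\,\mathbb{P}(V>u)\,du$ already suffices), so that a contraction/absorption argument closes the bound using only the first- and second-moment estimates plus Cauchy--Schwarz. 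This is shorter, avoids the fourth-moment bookkeeping and the cancellation analysis entirely, and extends verbatim to any moment of order $2p$ with $p\ge 2$; what it gives up is the sharper $\mathcal{O}(W(t)^2)$ asymptotics with identified leading terms that the exact formula provides. Two points you rightly flag should be written out: (i) the joint decomposition $\left(A(k,t),N_t\right)=\sum_i\left(A^{(i)}(k,t-u_i),N^{(i)}_{t-u_i}\right)+\left(X_t,\mathds{1}_{V_\emptyset>t}\right)$ must use the \emph{same} i.i.d.\ subtrees in both coordinates (the paper states the two decompositions separately, but they do come from the same grafted trees, so this is only a matter of phrasing); and (ii) the moment--cumulant identity for $I_t$ is valid only after conditioning on $V_\emptyset$, which you do. Note finally that \eqref{eq:majXt} as written omits a possible $+\mathds{1}_{V_\emptyset>t}$ for a family reduced to the still-living mutated ancestor; your additive constant $C_0$ absorbs this harmlessly.
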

Due to technicality, the proof of this lemma is postponed to the end in appendix.
\subsubsection{Arbitrary initial distribution case}
The following Lemmas are the counter part of Lemmas 6.5, 6.6, and 6.7 of \cite{H1}. They play the same role in the proof of Theorem \ref{thm:tclAllelic} as in the proof of the central limit theorem given in \cite{H1}.
In the sequel, we denote by $\left(A(k,t,\Xi)\right)_{k\geq 1}$, the frequency spectrum of the splitting tree where the lifetime of the ancestral individual is $\Xi$, in the same manner as for $N_{t}\left(\Xi\right)$ in \cite{H1}.
\begin{equation}
\label{eq:asCV2}
\mathcal{E}_{i}:=\lim\limits_{t\to\infty}\psi'(\alpha)e^{-\alpha t}N^{i}_{t}, \quad a.s,
\end{equation}
and, let $\mathcal{E}\left(\Xi\right)$  be the random variable defined by
\begin{equation}
\label{eq:ancBranlim}
\mathcal{E}\left(\Xi\right):=\int_{[0,\infty]}\mathcal{E}_{(\xi_{u})}e^{-\alpha u} \mathds{1}_{\Xi>u}\ \xi(du).
\end{equation}
\begin{lem}[$L^{2}$ convergence in the general case]
\label{lem:genConvAkt}
Consider the general frequency spectrum  $\left(A(k,t,\Xi)\right)_{k\geq 1}$, then, for all $k$, $\psi'(\alpha)e^{-\alpha t}A(k,t,\Xi)$ converge to $\mathcal{E}\left(\Xi \right)$ (see \ref{eq:ancBranlim}) in $L^{2}$ as $t$ goes to infinity and
\[
\lim\limits_{t\to\infty}e^{-\alpha t}\mathbb{E}\left[\left(\psi'(\alpha)A(k,t,\Xi)-e^{\alpha t}\mathcal{E}(\Xi)c_{k}\right)\left(\psi'(\alpha)A(l,t,\Xi)-e^{\alpha t}\mathcal{E}(\Xi)c_{k}\right)\right]=\frac{\alpha}{b}a_{k,l}\int_{ \mathbb{R}_{+}}e^{-\alpha u}\mathbb{P}\left(\Xi>u \right)bdu,
\]
where the convergence is uniform w.r.t.\ the random variable $\Xi$.
In the case where $\Xi$ is distributed as $O^{(\beta t)}_{2}$, for $0<\beta<\frac{1}{2}$, we get
\[
\lim\limits_{t\to\infty}e^{-\alpha t}\mathbb{E}\left[\left(\psi'(\alpha)A(k,t,O^{(\beta t)}_{2})-e^{\alpha t}\mathcal{E}(O^{(\beta t)}_{2})c_{k}\right)\left(\psi'(\alpha)A(l,t,O^{\beta t}_{2})-e^{\alpha t}\mathcal{E}(O^{(\beta t)}_{2})c_{k}\right)\right]=\psi'(\alpha)a_{k,l}.
\]
\end{lem}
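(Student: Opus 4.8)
The plan is to follow the pattern of Lemmas 6.5--6.7 of \cite{H1}, the one genuinely new feature being the families founded by mutations sitting on the ancestral branch. Decomposing the general frequency spectrum at the first generation,
\[
A(k,t,\Xi)=\int_{[0,t]}A(k,t-u,\xi_{u})\,\mathds{1}_{\Xi>u}\,\xi(du)+X_{t}(\Xi),
\]
where $X_{t}(\Xi)$ is the number of families of size $k$ alive at time $t$ whose founding mutation lies on $[0,\Xi]$ and the grafted subtrees are i.i.d.\ standard splitting trees, the one attached at the Poisson point $u$ carrying the limiting variable $\mathcal{E}_{\xi_{u}}$ of \eqref{eq:asCV2}. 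Writing $Y^{(k)}_{u}:=\psi'(\alpha)e^{-\alpha(t-u)}A(k,t-u,\xi_{u})-c_{k}\mathcal{E}_{\xi_{u}}$ and comparing with the definition \eqref{eq:ancBranlim} of $\mathcal{E}(\Xi)$, I would split
\[
\psi'(\alpha)e^{-\alpha t}A(k,t,\Xi)-c_{k}\mathcal{E}(\Xi)=\int_{[0,t]}e^{-\alpha u}Y^{(k)}_{u}\,\mathds{1}_{\Xi>u}\,\xi(du)+\psi'(\alpha)e^{-\alpha t}X_{t}(\Xi)-c_{k}\int_{(t,\infty]}e^{-\alpha u}\mathcal{E}_{\xi_{u}}\mathds{1}_{\Xi>u}\,\xi(du);
\]
note $c_{k}\mathcal{E}(\Xi)$ is exactly the centering of the covariance statement, hence also the claimed $L^{2}$ limit.

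\emph{The main term.} Conditioning on $(\xi,\Xi)$ and using that the grafted subtrees are independent, together with the second moment formula for Poisson integrals with marks (\cite[Lemma 2.6]{H1}), the ``diagonal'' part of $\mathbb{E}\bigl[\bigl(\int e^{-\alpha u}Y^{(k)}_{u}\mathds{1}_{\Xi>u}\xi(du)\bigr)\bigl(\int e^{-\alpha u}Y^{(l)}_{u}\mathds{1}_{\Xi>u}\xi(du)\bigr)\bigr]$ equals $\int_{0}^{t}e^{-2\alpha u}\,\mathbb{E}[Y^{(k)}_{u}Y^{(l)}_{u}]\,\mathds{1}_{\Xi>u}\,b\,du$. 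By Lemma \ref{lem:quadConv}, $\mathbb{E}[Y^{(k)}_{u}Y^{(l)}_{u}]\sim\frac{\alpha}{b}a_{k,l}\,e^{-\alpha(t-u)}$, so this part is asymptotic to $\alpha a_{k,l}\,e^{-\alpha t}\int_{0}^{t}e^{-\alpha u}\mathds{1}_{\Xi>u}\,du$; taking $\mathbb{E}_{\Xi}$, multiplying by $e^{\alpha t}$ and letting $t\to\infty$ yields $\frac{\alpha}{b}a_{k,l}\int_{\mathbb{R}_{+}}e^{-\alpha u}\mathbb{P}(\Xi>u)\,b\,du$, the announced constant. The ``off-diagonal'' part is bounded by $\bigl(\int_{0}^{t}e^{-\alpha u}|\mathbb{E}Y^{(k)}_{u}|\,b\,du\bigr)\bigl(\int_{0}^{t}e^{-\alpha u}|\mathbb{E}Y^{(l)}_{u}|\,b\,du\bigr)$, and since \eqref{eq:Mom1akate} together with Lemmas \ref{lem: asyComp}, \ref{lem:WComp} and \eqref{eq:survie} give $\psi'(\alpha)e^{-\alpha s}\mathbb{E}A(k,s)-c_{k}\mathbb{E}\mathcal{E}=\mathcal{O}(e^{-\alpha s})$ (exactly as in the computation leading to Lemma \ref{lem:quadConv}), this is $\mathcal{O}(t^{2}e^{-2\alpha t})=o(e^{-\alpha t})$. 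Every bound here uses only $\mathds{1}_{\Xi>u}\le 1$, so the limit is reached uniformly in $\Xi$.

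\emph{The error terms, and the main obstacle.} The hypothesis $\theta>\alpha$ enters exactly here and this is the only delicate point. A family founded by a mutation at $u$ on the ancestral branch is carried by a clonal splitting tree, non-extinct at time $t$ with probability $\mathbb{P}(Z_{0}(t-u)>0)=\mathcal{O}(e^{-(\theta-\alpha)(t-u)})$ by \eqref{eq:loizzero} and Lemma \ref{lem: asyComp}; computing $\mathbb{E}[X_{t}(\Xi)^{2}]$ exactly as in the proof of Lemma \ref{lem:akegnt} (again via \cite[Lemma 2.6]{H1}) then gives $\mathbb{E}[X_{t}(\Xi)^{2}]=\mathcal{O}(1)$, so this term contributes $\mathcal{O}(e^{-2\alpha t})$. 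The tail term has second moment $\mathcal{O}(e^{-2\alpha t})$ by a Campbell-type computation using $\mathbb{E}\mathcal{E}^{2}<\infty$ (Theorem \ref{thm:ASCVNt}), and the cross products of the three terms are negligible by Cauchy--Schwarz. Summing up, $\mathbb{E}\bigl[(\psi'(\alpha)e^{-\alpha t}A(k,t,\Xi)-c_{k}\mathcal{E}(\Xi))^{2}\bigr]=\mathcal{O}(e^{-\alpha t})\to 0$, which at once gives the $L^{2}$ convergence and, combined with the main term, the covariance formula, all uniformly in $\Xi$. The real obstacle is not any individual estimate but keeping every bound uniform in the arbitrary random variable $\Xi$, since it is this uniformity that legitimizes the $t$-dependent substitution $\Xi\overset{d}{=}O^{(\beta t)}_{2}$ below.

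\emph{The overshoot case.} Plugging the (now $t$-dependent) law $O^{(\beta t)}_{2}$ into the uniform estimate just obtained, it only remains to show $\int_{0}^{t}e^{-\alpha u}\mathbb{P}(O^{(\beta t)}_{2}>u)\,b\,du\to\frac{b}{\alpha}\psi'(\alpha)$. From \eqref{eq:loiOversh} and Lemmas \ref{lem: asyComp}, \ref{lem:WComp}, $\tfrac{W(\beta t-y)}{W(\beta t)-1}\to e^{-\alpha y}$ with a uniform bound of the form $Ce^{-\alpha y}$, so $O^{(\beta t)}_{2}$ converges in law to the stationary overshoot $O_{\infty}$ with $\mathbb{P}(O_{\infty}\in dx)=\int_{\mathbb{R}_{+}}e^{-\alpha y}b\,\mathbb{P}(V-y\in dx)\,dy$, and dominated convergence (dominating function $e^{-\alpha u}$) gives $\int_{0}^{t}e^{-\alpha u}\mathbb{P}(O^{(\beta t)}_{2}>u)\,b\,du\to\int_{\mathbb{R}_{+}}e^{-\alpha u}\mathbb{P}(O_{\infty}>u)\,b\,du$. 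A direct computation, reducing the last integral to $b^{2}\int_{0}^{\infty}w\,e^{-\alpha w}\mathbb{P}(V>w)\,dw$ by the change of variables $w=u+y$ and then using $\psi'(\alpha)=1-b\int r e^{-\alpha r}\mathbb{P}_{V}(dr)$ (from \eqref{eq:laplace}) together with \eqref{eq:intalpha}, shows it equals $\frac{b}{\alpha}\psi'(\alpha)$. Hence the limiting constant becomes $\frac{\alpha}{b}a_{k,l}\cdot\frac{b}{\alpha}\psi'(\alpha)=\psi'(\alpha)a_{k,l}$, which is the asserted value.
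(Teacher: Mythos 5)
Your argument is correct and follows exactly the route the paper intends: the paper omits this proof, stating only that it is a direct adaptation of Lemmas 6.5--6.7 of \cite{H1}, and your write-up is precisely that adaptation (first-generation decomposition over the ancestral branch, Campbell's formula splitting diagonal and off-diagonal contributions, control of the ancestral-branch families $X_{t}(\Xi)$ via the clonal subcriticality $\theta>\alpha$, and the overshoot computation reducing $\int_{\mathbb{R}_{+}}e^{-\alpha u}\mathbb{P}(O_{\infty}>u)\,b\,du$ to $\tfrac{b}{\alpha}\psi'(\alpha)$). In particular your identification of the limiting constant in both the general and the overshoot cases checks out against \eqref{eq:laplacepv} and \eqref{eq:intalpha}.
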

\begin{lem}[First moment]
The first moments are asymptotically bounded, that is, for all $k\geq 1$,
\label{lem:firstMomakate}
\[
\mathbb{E}\left(\psi'(\alpha)A(k,t)(\Xi)-e^{\alpha t}c_{k}\mathcal{E}(\Xi)\right)\leq \mathcal{O}(1),
\]
uniformly with respect to the random variable $\Xi$.
\end{lem}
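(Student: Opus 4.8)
The plan is to mimic the decomposition already used in the proof of Lemma \ref{lem:akegnt}, but now for a splitting tree whose ancestral individual has the (possibly atypical) lifetime $\Xi$. First I would write, as in Lemma \ref{lem:jointMomAk} and the discussion preceding \eqref{eq:majXt}, the branching decomposition
\[
A(k,t,\Xi)=\int_{[0,t]}A(k,t-u,\xi_{u})\mathds{1}_{\Xi>u}\,\xi(du)+X_{t}(\Xi),
\]
where $X_{t}(\Xi)$ counts the families of size $k$ whose founding mutation lies on the ancestral branch, and similarly $\mathcal{E}(\Xi)=\int_{[0,\infty]}\mathcal{E}_{(\xi_{u})}e^{-\alpha u}\mathds{1}_{\Xi>u}\,\xi(du)$ by \eqref{eq:ancBranlim}. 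Taking expectations (conditionally on $\Xi$ and then using the independence of the grafted trees from $\xi$ and $\Xi$), the contribution of the first term to $\mathbb{E}\!\left[\psi'(\alpha)A(k,t,\Xi)-e^{\alpha t}c_{k}\mathcal{E}(\Xi)\mid \Xi\right]$ is
\[
\int_{[0,t]}\Big(\psi'(\alpha)\,\mathbb{E}\,A(k,t-u)-e^{\alpha(t-u)}c_{k}\,\mathbb{E}\,\mathcal{E}\Big)\,\mathds{1}_{\Xi>u}\,b\,du,
\]
so it suffices to bound the integrand uniformly and the residual ancestral-branch terms.

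The key estimates are then: (i) $\psi'(\alpha)\mathbb{E}\,A(k,s)-e^{\alpha s}c_{k}\mathbb{E}\,\mathcal{E}$ is $\mathcal{O}(1)$ uniformly in $s$; and (ii) $\psi'(\alpha)\mathbb{E}\,X_{t}(\Xi)-e^{\alpha t}c_{k}\,\mathbb{E}[\mathcal{E}(\Xi)\text{'s ancestral part}]$ is $\mathcal{O}(1)$ uniformly in $\Xi$. For (i), combine \eqref{eq:Mom1akate} for $\mathbb{E}_{t}A(k,t)$ with \eqref{eq:survie}, \eqref{eq:loiNt}, Lemma \ref{lem: asyComp} and Lemma \ref{lem:WComp}: both $\psi'(\alpha)\mathbb{E}\,A(k,s)$ and $e^{\alpha s}c_{k}\,\mathbb{E}\,\mathcal{E}$ are of the form $c_{k}e^{\alpha s}+\mathcal{O}(1)$ (the $e^{\alpha s}$ terms cancelling exactly because $c_{k}(s)\to c_{k}$ with an $\mathcal{O}(e^{-\theta s})$ error by Lemma \ref{lem: asyComp}, and $\mathbb{E}\,\mathcal{E}=\alpha/b$ up to lower order via \eqref{eq:Prnonex}), and the difference of the $\mathcal{O}(1)$ remainders is exactly $\gamma_{k}$ of Lemma \ref{lem:akegnt} up to $o(1)$. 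For (ii), bound $X_{t}(\Xi)$ a.s.\ by $\int_{[0,t]}\mathds{1}_{Z^{(u)}_{0}(t-u)>0}\mathds{1}_{\Xi>u}\,\xi(du)$ as in \eqref{eq:majXt}, take expectations to get $\int_{0}^{t}\mathbb{P}(Z_{0}(t-u)>0)\mathds{1}_{\Xi>u}\,b\,du$, and use $\mathbb{P}(Z_{0}(s)>0)=\mathcal{O}(e^{-(\theta-\alpha)s})$ (from \eqref{eq:loizzero} and Lemma \ref{lem: asyComp}) together with $\theta>\alpha$ so that this integral is bounded uniformly in $\Xi$; the analogous contribution of the ancestral part of $\mathcal{E}(\Xi)$ is handled by the same a.s.\ bound multiplied by $e^{\alpha t}$ against $\mathcal{E}$ and Cauchy--Schwarz as in Lemma \ref{lem:akegnt}.

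Putting these together, $\mathbb{E}\!\left[\psi'(\alpha)A(k,t,\Xi)-e^{\alpha t}c_{k}\mathcal{E}(\Xi)\mid\Xi\right]$ is bounded by $\mathcal{C}\int_{[0,t]}\mathds{1}_{\Xi>u}\,b\,du + \mathcal{O}(1)$ --- but one must be careful: $\int_{[0,t]}\mathds{1}_{\Xi>u}\,b\,du$ is not uniformly bounded, so the cancellation in (i) must be used \emph{before} integrating, i.e.\ the integrand $\psi'(\alpha)\mathbb{E}\,A(k,t-u)-e^{\alpha(t-u)}c_{k}\mathbb{E}\,\mathcal{E}$ must be shown to be not merely $\mathcal{O}(1)$ but integrable in $u$ on $\mathbb{R}_{+}$, which follows from the decay rate in Lemma \ref{lem:akegnt}'s proof (the relevant difference decays like $e^{-(\theta-\alpha)u}$ by \eqref{eq:inequality}). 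Taking expectation over $\Xi$ and using $\mathbb{P}(\Xi>u)\le 1$ then gives the claimed uniform $\mathcal{O}(1)$ bound. The main obstacle I expect is exactly this point: organizing the cancellation so that what is integrated against $\xi(du)$ is genuinely integrable rather than merely bounded, which forces one to reuse the direct-Riemann-integrability arguments of Lemma \ref{lem:akegnt} rather than just its conclusion.
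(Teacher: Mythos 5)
The paper does not actually write out a proof of this lemma: it states that Lemmas \ref{lem:genConvAkt}--\ref{lem:bdgen} are ``direct adaptations'' of Lemmas 6.5--6.7 of \cite{H1}. Your strategy --- decompose $A(k,t,\Xi)$ over the trees grafted on the ancestral branch, match each grafted tree against the corresponding term of $\mathcal{E}(\Xi)$ in \eqref{eq:ancBranlim}, and control separately the ancestral-branch mutations $X_{t}(\Xi)$ and the tail $\int_{(t,\infty]}e^{\alpha(t-u)}\mathcal{E}_{(\xi_{u})}\mathds{1}_{\Xi>u}\,\xi(du)$ --- is exactly the adaptation the author has in mind, and your treatment of $X_{t}(\Xi)$ and of the tail term is fine (for a first moment you do not even need Cauchy--Schwarz: $\mathbb{E}X_{t}(\Xi)\le b\int_{0}^{\infty}\mathbb{P}(Z_{0}(s)>0)\,ds<\infty$ and the tail of $\mathcal{E}(\Xi)$ contributes at most $c_{k}$).

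There is, however, one genuine gap, precisely at the point you flag as the main obstacle. You need the integrand $h(s):=\psi'(\alpha)\mathbb{E}A(k,s)-e^{\alpha s}c_{k}\,\tfrac{\alpha}{b}$ to be integrable on $\mathbb{R}_{+}$ (taking $\Xi$ with large or infinite mean shows that $h=\mathcal{O}(1)$ is not enough), and you assert that this integrability follows from the decay $e^{-(\theta-\alpha)s}$ of \eqref{eq:inequality}. But \eqref{eq:inequality} only controls $c_{k}\mathbb{E}N_{s}-\mathbb{E}A(k,s)$. Writing $h(s)=\psi'(\alpha)\bigl(\mathbb{E}A(k,s)-c_{k}\mathbb{E}N_{s}\bigr)+c_{k}\bigl(\psi'(\alpha)\mathbb{E}N_{s}-e^{\alpha s}\tfrac{\alpha}{b}\bigr)$, the first piece is indeed $\mathcal{O}(e^{-(\theta-\alpha)s})$, but the second piece is not touched by \eqref{eq:inequality}: by \eqref{eq:NtNoCond} and Lemma \ref{lem:WComp} it equals a combination of $e^{\alpha s}F(s)$-type terms that converge (to $0$, after cancellation) but with no rate stated in this paper, so its integrability does not follow from anything you have quoted. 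The clean repair is to split off $c_{k}\bigl(\psi'(\alpha)N_{t}(\Xi)-e^{\alpha t}\mathcal{E}(\Xi)\bigr)$ at the level of random variables and invoke the corresponding first-moment lemma of \cite{H1} (Lemma 6.6 there) for it; what is then genuinely new reduces to $\mathbb{E}\bigl(A(k,t,\Xi)-c_{k}N_{t}(\Xi)\bigr)=\mathcal{O}(1)$, for which your $e^{-(\theta-\alpha)s}$ decay and the $X_{t}(\Xi)$ bound do suffice. (A side remark: the constant $\gamma_{k}$ of Lemma \ref{lem:akegnt} is the limit of $c_{k}\mathbb{E}N_{t}\mathcal{E}-\mathbb{E}A(k,t)\mathcal{E}$, a joint moment with $\mathcal{E}$, so it is not ``the difference of the $\mathcal{O}(1)$ remainders'' in your step (i); that identification should be dropped.)
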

\begin{lem}[Boundedness in the general case.]
\label{lem:bdgen}
Let $k_{1},k_{2},k_{3}$ three positive integers, then
\[
\mathbb{E}\left[\left|\prod_{i=1}^{3}\frac{\left(\psi'(\alpha)A(k_{i},t)-e^{\alpha t}\mathcal{E}c_{k_{i}}\right)}{e^{\frac{\alpha}{2} t}} \right| \right]=\mathcal{O}\left(1\right),
\]
uniformly with respect to the random variable $\Xi$.
\end{lem}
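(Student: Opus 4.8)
The plan is to follow the scheme used for \cite[Lemma~6.7]{H1}, the only genuinely new ingredient being the control of the extra term $X_{t}$ produced by mutations on the ancestral branch. As in the proof of Lemma~\ref{lem:aktBound}, Hölder's inequality reduces the claim to the diagonal case $k_{1}=k_{2}=k_{3}=:k$, so it suffices to show that $e^{-3\alpha t/2}\,\mathbb{E}\big[|\psi'(\alpha)A(k,t,\Xi)-e^{\alpha t}c_{k}\mathcal{E}(\Xi)|^{3}\big]=\mathcal{O}(1)$ uniformly in $\Xi$. I would then combine the branching decomposition of $A(k,t,\Xi)$ recalled in the proof of Lemma~\ref{lem:jointMomAk} with the decomposition \eqref{eq:ancBranlim} of $\mathcal{E}(\Xi)$, splitting the latter at time $t$, to obtain
\[
\psi'(\alpha)A(k,t,\Xi)-e^{\alpha t}c_{k}\mathcal{E}(\Xi)=\int_{[0,t]}Y_{\xi_{u}}(t-u)\mathds{1}_{\Xi>u}\,\xi(du)+\psi'(\alpha)X_{t}-e^{\alpha t}c_{k}\int_{(t,\infty]}\mathcal{E}_{(\xi_{u})}e^{-\alpha u}\mathds{1}_{\Xi>u}\,\xi(du),
\]
where, thanks to Theorem~\ref{thm:ASCVakate} applied to each grafted subtree, $Y_{i}(s):=\psi'(\alpha)A(k,s,i)-e^{\alpha s}c_{k}\mathcal{E}_{(i)}$ is, for the $i$-th grafted tree, a copy of the standard-case fluctuation $\psi'(\alpha)A(k,s)-e^{\alpha s}c_{k}\mathcal{E}$, the $(Y_{i})_{i\ge1}$ being i.i.d., with the same limiting variable $\mathcal{E}_{(i)}$ as the one driving the grafted counting process $N^{(i)}$. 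It is then enough to bound the third moment of each of the three terms after dividing by $e^{3\alpha t/2}$.

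The last two terms are of lower order and their bounds will not involve $\Xi$. For $\psi'(\alpha)X_{t}$ I would use the almost sure domination \eqref{eq:majXt} (with $\Xi$ in place of $V$), which exhibits it as a constant times a thinning of $\xi$ with retention probability $\mathbb{P}(Z_{0}(t-u)>0)=\mathcal{O}(e^{-(\theta-\alpha)(t-u)})$ (as in the proof of Lemma~\ref{lem:akegnt}); since $\theta>\alpha$, this thinned process is Poisson with bounded mean, so its third moment is bounded and $e^{-3\alpha t/2}\mathbb{E}[X_{t}^{3}]\to0$, uniformly in $\Xi$. For the tail term I would dominate $\mathds{1}_{\Xi>u}$ by $1$ and apply the Poisson moment formula, using that $\mathcal{E}$ has finite moments of all orders: the integral has third moment $\mathcal{O}(e^{-3\alpha t})$, hence the whole term has third moment $\mathcal{O}(1)$ and its contribution, after the final division, is again $o(1)$ uniformly in $\Xi$.

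The heart of the matter is the first term. Conditionally on $\xi$ and $\Xi$, the summands $Y_{\xi_{u}}(t-u)\mathds{1}_{\Xi>u}$, for atoms $u\in\xi\cap[0,t]$, are independent, so a Rosenthal-type inequality for sums of independent (not necessarily centered) random variables bounds the conditional third moment by a constant times
\[
\sum_{u}\mathbb{E}\big[|Y(t-u)|^{3}\big]+\Big(\sum_{u}\mathbb{E}\big[Y(t-u)^{2}\big]\Big)^{3/2}+\Big(\sum_{u}\big|\mathbb{E}\,Y(t-u)\big|\Big)^{3},
\]
the sums running over the retained atoms. Here Lemma~\ref{lem:aktBound} gives $\mathbb{E}[|Y(s)|^{3}]=\mathcal{O}(e^{3\alpha s/2})$, Lemma~\ref{lem:quadConv} gives $\mathbb{E}[Y(s)^{2}]=\mathcal{O}(e^{\alpha s})$, and Jensen's inequality then gives $|\mathbb{E}\,Y(s)|=\mathcal{O}(e^{\alpha s/2})$. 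Bounding the indicators by $1$ and taking expectations turns each sum into a shot-noise functional of the Poisson measure $\xi$; evaluating the relevant first, $3/2$-th and third moments of these functionals through the Poisson moment formula, one checks that all three contributions are $\mathcal{O}(e^{3\alpha t/2})$. Hence the first term has third moment $\mathcal{O}(e^{3\alpha t/2})$, and dividing by $e^{3\alpha t/2}$ and collecting the three estimates finishes the proof; since the indicators $\mathds{1}_{\Xi>u}$ have only been used through $\mathds{1}_{\Xi>u}\le1$, the resulting constant does not depend on $\Xi$.

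The hard part will be precisely this last step: feeding the standard-case second and third moment bounds of Lemmas~\ref{lem:quadConv} and~\ref{lem:aktBound} into the Rosenthal inequality at the correct exponential rates and then controlling the ensuing shot-noise moments, while making sure that discarding the indicators is harmless so that the final bound is uniform in $\Xi$. The term $X_{t}$ has no counterpart in \cite{H1}, but the deterministic bound \eqref{eq:majXt} together with the clonal sub-criticality $\theta>\alpha$ makes its contribution routine.
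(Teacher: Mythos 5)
Your proposal is correct and follows exactly the route the paper intends: the paper gives no written proof of this lemma, stating only that it is a "direct adaptation" of Lemma 6.7 of \cite{H1}, and your argument (Hölder reduction to the diagonal case, the Poisson decomposition of $\psi'(\alpha)A(k,t,\Xi)-e^{\alpha t}c_k\mathcal{E}(\Xi)$ over the ancestral branch, a Rosenthal-type bound fed with the standard-case moment estimates of Lemmas \ref{lem:quadConv} and \ref{lem:aktBound}, and the control of $X_t$ via \eqref{eq:majXt} and clonal sub-criticality) is precisely that adaptation, with all the exponential rates checking out. The only cosmetic remark is that the $3/2$-th moment of the shot-noise functional is not given directly by the Poisson moment formula but is obtained from the third moment via Cauchy--Schwarz, which still yields the required $\mathcal{O}(e^{3\alpha t/2})$.
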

We do not detail the proofs of these results since they are direct adaptations of the proofs of Lemmas 6.5, 6.6, and 6.7 of \cite{H1}.
\subsection{Proof of the result}
The following result is based on the fact that, in the clonal sub-critical case, the lifetime of a family is expected to be small. It follows that one can expect that all the family of size $k$ live in different subtrees as soon as $t>>u$. This is the point of the following lemma.
\begin{lem}

\label{lem:limitEvent}
Suppose that $\alpha<\theta$.
If we denote by $\Gamma_{u,t}$ the event,
\[
\Gamma_{u,t}=\left\{\text{"there is no family in the population at time t which is older than }u\text{"} \right\},
\]
then, for all $\beta$ in $(0,1-\frac{\alpha}{\theta})$, we have
\[
\lim\limits_{t\to\infty}\mathbb{P}_{\beta t}\left(\Gamma_{\beta t,t} \right)=1.
\]
\end{lem}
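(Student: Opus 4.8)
The plan is to dominate $1-\mathbb{P}_{\beta t}(\Gamma_{\beta t,t})$ by a first moment. Call a family \emph{old} if it is present in the population at time $t$ and its founding mutation occurred before the absolute time $\beta t$, i.e.\ if its age at time $t$ exceeds $(1-\beta)t$; let $B_{t}$ denote the number of non-clonal old families. Then $\Gamma_{\beta t,t}^{c}=\{B_{t}\geq 1\}$ (if one insists on also discarding the ancestral type from the statement, one simply adds the event $\{Z_{0}(t)>0\}$, dealt with at the end), so Markov's inequality gives $\mathbb{P}_{\beta t}(\Gamma_{\beta t,t}^{c})\leq\mathbb{E}_{\beta t}[B_{t}]$. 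In the derivation of \eqref{eq:Mom1akate} from \eqref{eq:recFormula} the integration variable is exactly the age of the mutation founding a family, so the mean number of size-$k$ non-clonal families of age $>(1-\beta)t$ equals $W(t)\int_{(1-\beta)t}^{t}\frac{\theta e^{-\theta s}}{W_{\theta}(s)^{2}}(1-1/W_{\theta}(s))^{k-1}ds$; summing over $k$ and using $\sum_{k\geq 1}(1-1/W_{\theta}(s))^{k-1}=W_{\theta}(s)$ yields
\[
\mathbb{E}_{t}[B_{t}]=W(t)\int_{(1-\beta)t}^{t}\frac{\theta e^{-\theta s}}{W_{\theta}(s)}\,ds .
\]

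To estimate this, recall that $1/W_{\theta}(s)\in(0,1]$ for every $s\geq 0$, being the parameter of the geometric law in \eqref{eq:loizzero}, and that $W(t)\leq e^{\alpha t}/\psi'(\alpha)$ by Lemma \ref{lem:WComp} (since $F\geq 0$). Hence
\[
\mathbb{E}_{t}[B_{t}]\leq\frac{e^{\alpha t}}{\psi'(\alpha)}\int_{(1-\beta)t}^{\infty}\theta e^{-\theta s}\,ds=\frac{1}{\psi'(\alpha)}\,e^{-(\theta(1-\beta)-\alpha)\,t},
\]
and the hypothesis $\beta<1-\alpha/\theta$ is precisely $\theta(1-\beta)-\alpha>0$, so $\mathbb{E}_{t}[B_{t}]\to 0$ as $t\to\infty$.

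It remains to pass from $\mathbb{P}_{t}$ to the conditioning $\mathbb{P}_{\beta t}$ of the statement. A lineage alive at time $t$ is alive at every earlier time, so $\{B_{t}\geq 1\}\subseteq\{N_{t}>0\}\subseteq\{N_{\beta t}>0\}$, whence $\mathbb{E}[B_{t}\mathds{1}_{N_{\beta t}>0}]=\mathbb{E}[B_{t}]=\mathbb{P}(N_{t}>0)\,\mathbb{E}_{t}[B_{t}]$, and therefore
\[
\mathbb{E}_{\beta t}[B_{t}]=\frac{\mathbb{P}(N_{t}>0)}{\mathbb{P}(N_{\beta t}>0)}\,\mathbb{E}_{t}[B_{t}]\longrightarrow 0,
\]
since both survival probabilities converge to $\alpha/b>0$ by \eqref{eq:Prnonex}. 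The same device applied to \eqref{eq:loizzero} gives $\mathbb{P}_{\beta t}(Z_{0}(t)>0)=\frac{\mathbb{P}(N_{t}>0)}{\mathbb{P}(N_{\beta t}>0)}\,\frac{e^{-\theta t}W(t)}{W_{\theta}(t)}\to 0$, so the clonal family too is extinct at time $t$ with probability tending to one. Combining these estimates yields $\mathbb{P}_{\beta t}(\Gamma_{\beta t,t})\to 1$.

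The only point requiring care is the bookkeeping: ``older than $u=\beta t$'' must be read as ``founded before the absolute time $\beta t$'', so that the time a clonal subtree has to survive is $(1-\beta)t$; this is what makes the exponent $\alpha-\theta(1-\beta)$ negative under the hypothesis, and it is where the clonal sub-criticality $\theta>\alpha$ enters (here only through the crude bound $1/W_{\theta}\leq 1$, but morally through $W_{\theta}$ being bounded). Everything else is a one-line first-moment estimate together with the routine replacement of $\mathbb{P}_{t}$ by $\mathbb{P}_{\beta t}$.
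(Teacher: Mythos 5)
Your proof is correct, but it takes a genuinely different route from the paper's. The paper works with the exact law: it decomposes the CPP at level $u=\beta t$, uses that the number $\widetilde{N}^{(t)}_{u}$ of time-$u$ ancestors of the time-$t$ population is geometric with parameter $W(t-u)/W(t)$ under $\mathbb{P}_{t}$ and that the clonal subfamilies $Z^{i}_{0}(t-u)$ are i.i.d.\ and independent of $\widetilde{N}^{(t)}_{u}$ (Proposition 4.3 of \cite{CH}), computes $\mathbb{P}_{t}\left(\Gamma_{u,t}\right)$ in closed form, and only then transfers the conditioning from $\mathbb{P}_{t}$ to $\mathbb{P}_{\beta t}$. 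You instead run a first-moment union bound: the expected number of ``old'' non-clonal families is the age-localized version of \eqref{eq:Mom1akate} summed over $k$ by the geometric series, and it is killed by the crude bounds $W(t)\leq e^{\alpha t}/\psi'(\alpha)$ and $W_{\theta}\geq 1$, the hypothesis $\beta<1-\alpha/\theta$ entering exactly as $\theta(1-\beta)>\alpha$. Your route is shorter and needs less structure --- only the first-moment formula, the inclusion $\{N_{t}>0\}\subseteq\{N_{\beta t}>0\}$ and \eqref{eq:Prnonex}, with no appeal to the independence statement --- whereas the paper's computation yields an exact expression for $\mathbb{P}_{t}\left(\Gamma_{u,t}\right)$ that could in principle give a rate of convergence. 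Two minor remarks: the formula for the mean number of old size-$k$ families is not literally stated in the paper (only \eqref{eq:Mom1akate} is), but it does follow from \eqref{eq:recFormula} with $l=1$ since the first coordinate of $\mathcal{N}$ is precisely the age of the founding mutation, so that step is legitimate; and you were right to treat the clonal family separately, since the event $\Gamma_{u,t}$ as actually used in the paper (every type present at time $u$ is extinct by time $t$) does include the ancestral type.
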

\begin{proof}
The proof of this Lemma, as the calculation of the moments of $A(k,t)$ relies on the representation of the genealogy of the living population at time $t$ as a coalescent point process \cite{CH}. Moreover, we denote by $\widetilde{N}^{(t)}_{u}$ the number of living individuals at time $u$ who have alive descent at time $t$. In \cite{CH}, we showed that, under $\mathbb{P}_{t}$, $\widetilde{N}^{(t)}_{u}$ is geometrically distributed with parameter $\frac{W(t-u)}{W(t)}$.

Now, $\mathds{1}_{\Gamma_{u,t}}$ can be rewritten as
\[
\mathds{1}_{\Gamma_{u,t}}=\prod_{i=1}^{\widetilde{N}^{(t)}_{u}}\mathds{1}_{\left\{Z^{i}_{0}(t-u)=0 \right\}},
\]
where $Z_{0}^{i}(t-u)$ denotes the number of individuals alive at time $t$ descending from the $i$th individual alive at time $u$ and carrying its type (the clonal type of the sub-CPP). Moreover, from Proposition 4.3 of \cite{CH}, we know that that under $\mathbb{P}_{t}$, the family $Z^{(i)}_{0}(t-u)$ is an i.i.d.\ family of random variables distributed as $Z_{0}(t-u)$ under $\mathbb{P}_{t-u}$, and $\widetilde{N}^{(t)}_{u}$ is independent of $Z^{(i)}_{0}(t-u)$ (still under $\mathbb{P}_{t}$).

Then,
\[
\mathbb{P}_{t}\left(\Gamma_{t,u} \right)=\mathbb{E}_{t}\left[\mathbb{P}_{t-u}\left(Z_{0}(t-u)=0\right)^{\widetilde{N}^{(t)}_{u}} \right]
=\frac{\mathbb{P}_{t-u}\left(Z_{0}(t-u)=0\right)\frac{W(t-u)}{W(t)}}{1-\mathbb{P}_{t-u}\left(Z_{0}(t-u)=0\right)\left(1-\frac{W(t-u)}{W(t)}\right)}.
\]
Using \eqref{eq:loizzero}, some calculus leads to, 
\[
\mathbb{P}_{t}\left(\Gamma_{t,u} \right)=1-\frac{1}{1+\frac{W_{\theta}(t-u)}{e^{-\theta (t-u)}W(t)}\left(1-\frac{e^{-\theta (t-u)}W(t-u)}{W_{\theta}(t-u)} \right)}.
\]
Now, since,
\[
\mathbb{P}_{t}\left(\Gamma_{t,u} \right)=\mathbb{P}_{u}\left(\Gamma_{t,u} \right)\frac{\mathbb{P}\left(N_{u}>0 \right)}{\mathbb{P}\left(N_{t}>0 \right)}+\frac{\mathbb{P}\left(\Gamma_{t,u},N_{t}=0,N_{u}>0 \right)}{\mathbb{P}\left(N_{t}>0 \right)},
\]
taking $u=\beta t$, we obtain, using Lemma \ref{lem: asyComp} and
\[
\mathbb{P}\left(N_{t}=0,N_{\beta t}>0 \right)=\mathbb{P}\left(N_{\beta t}>0\right)-\mathbb{P}\left(N_{t}>0\right)\underset{t\to\infty}{\to}0,
\]
 the desired result.
\end{proof}
\begin{proof}[Proof of Theorem \ref{thm:tclAllelic}]
Fix $0<u<t$. Note that the event  $\Gamma_{u,t}$ of Lemma \ref{lem:limitEvent} can be rewritten as
\begin{equation}
\label{eq:decEvent}
\mathds{1}_{\Gamma_{u,t}}=\prod_{i=1}^{N_{u}}\mathds{1}_{\left\{Z^{i}_{0}(t-u,O_{i})=0 \right\}},
\end{equation}
where $Z_{0}^{i}(t-u,O_{i})$ denote the number of individuals alive at time $t$ carrying the same type as the $i$th alive individual at time $u$, that is the ancestral family of the splitting constructed from the residual lifetime of the $i$th individual (see Section 4 in \cite{H1}).

Let $K$ be a multi-integer, we denote by $\mathcal{L}^{(K)}$ (resp. $A(K,t)$) the random vector $\left(\mathcal{L}^{k_{1}},\dots,\mathcal{L}^{k_{N}}\right)$ (resp. $\left(A(k_{1},t),\dots,A(k_{N},t)\right)$) with
\[
\mathcal{L}_{t}^{k_{i}}=\frac{\psi'(\alpha)A(k,t)-c_{k}e^{\alpha t}\mathcal{E}}{e^{\frac{\alpha}{2}t}}.
\]
On the event $\Gamma_{u,t}$, we have a.s.,
\[
A(k_{l},t)=\sum_{i=1}^{N_{u}}A^{(i)}(k_{l},t-u,O_{i}), \quad \forall l=1,\dots,N,
\]
where the family $\left(A^{(i)}\left(k_{l},t-u,O_{i}\right)\right)_{i\geq 1}$  stand for the frequency spectrum for each subtree, which are independent from Lemma \ref{lem:residual}.
Hence, using Lemma \ref{lem:dec},
\[
\mathcal{L}_{t}^{k_{l}}=\sum_{i=1}^{N_{u}}\frac{\psi'(\alpha)A^{(i)}(k_{l},t-u,O_{i})-e^{\alpha(t-u)}\mathcal{E}_{i}(O_{i})c_{k_{l}}}{e^{\frac{\alpha}{2}u}e^{\frac{\alpha}{2}(t-u)}}.
\]
By Lemma \ref{lem:residual}, that the family $\left(A^{i}(k_{l},t-u,O_{i}) \right)_{2\leq i\leq N_{u}}$ is i.i.d. under $\mathbb{P}_{u}$.

In the sequel, we denote, for all $l$ and $i\geq 1$,
\[
\tilde{A}^{(i)}\left(k_{l},t-u,O_{i}\right)=\frac{\psi'(\alpha)A^{(i)}\left(k_{l},t-u,O_{i}\right)-e^{\alpha(t-u)}\mathcal{E}_{i}(O_{i})c_{k_{l}}}{e^{\frac{\alpha}{2}(t-u)}}.
\]
Now, let 
\begin{align*}
\varphi_{K}\left(\xi\right)&:=\mathbb{E}\left[\exp\left(i<\tilde{A}\left(K,t-u,O_{2}\right),\xi>\right)\mathds{1}_{Z^{2}_{0}(t-u,O_{2})=0}\right],\\
\tilde{\varphi}_{K}\left(\xi\right)&:=\mathbb{E}\left[\exp\left(i<\tilde{A}\left(K,t-u,O_{1}\right),\xi>\right)\mathds{1}_{Z^{1}_{0}(t-u,O_{1})=0}\right].
\end{align*}

From this point, following closely the proof of Theorem 3.2 of \cite{H1}. Taking $u=\beta$ in $\left(0, \frac{1}{2}\wedge (1-\frac{\alpha}{\theta}) \right)$, the only difficulty is to handle the indicator function $\mathds{1}_{Z_{0}(t-u,O_{i})>0}$ in the Taylor development of $\varphi_{K}$. We show how it can be done for one of the second order terms, and leave the rest of the details to the reader.

It follows from Hölder's inequality that
\begin{multline}
\label{eq:note}
\mathbb{E}\left[\left(
\frac{\psi'(\alpha)A^{(i)}\left(k_{l},(1-\beta)t,O_{i}\right)-e^{\alpha((1-\beta)t)}\mathcal{E}_{i}(O_{i})c_{k_{l}}}{e^{\frac{\alpha}{2}((1-\beta)t)}}
 \right)^{2}\mathds{1}_{Z^{2}_{0}((1-\beta)t,O_{2})>0} \right]\\\leq\mathbb{E}\left[\left(
 \frac{\psi'(\alpha)A^{(i)}\left(k_{l},(1-\beta)t,O_{i}\right)-e^{\alpha(1-\beta)t}\mathcal{E}_{i}(O_{i})c_{k_{l}}}{e^{\frac{\alpha}{2}(1-\beta)t}}
  \right)^{3} \right]^{\frac{2}{3}}\mathbb{P}\left(Z^{2}_{0}((1-\beta)t,O_{2})>0\right)^{\frac{1}{3}},
\end{multline}
from which it follows, using Lemma \ref{lem:bdgen},  that the r.h.s.\ of this last inequality is $\mathcal{O}\left(\mathbb{P}\left(Z^{2}_{0}(t-u,O_{2})>0\right)^{\frac{1}{3}} \right)$.
Now, using \eqref{eq:decEvent} and Lemma \ref{lem:limitEvent}, it is easily seen that
\[
\lim\limits_{t\to\infty}\mathbb{P}\left(Z^{2}_{0}((1-\beta)t,O_{2})>0\right)=0.
\]
Finally, using Lemma \ref{lem:quadConv}, we get
\[
\lim\limits_{t\to\infty}\mathbb{E}\left[\left(
\frac{\psi'(\alpha)A^{(i)}\left(k_{l},t-u,O_{i}\right)-e^{\alpha(t-u)}\mathcal{E}_{i}(O_{i})c_{k_{l}}}{e^{\frac{\alpha}{2}(t-u)}}
 \right)^{2}\mathds{1}_{Z^{2}_{0}(t-u,O_{2})=0} \right]=\psi'(\alpha)a_{k,k}.
\]
These allow us to conclude that
\[
\lim\limits_{t\to\infty} \mathbb{E}_{\beta t}\left[e^{i<\mathcal{L}^{(K)}_{t},\xi>}\mathds{1}_{\Gamma_{t}} \right]=\frac{1}{1+\sum_{i,j=1}^{N}\mathcal{M}_{i,j}\ \xi_{i}\xi_{j}},
\]
where $K_{i,j}$ is given by
\[
\mathcal{M}_{i,j}:=\psi'(\alpha)a_{K_{i},K_{j}},
\]
with $K$ is the multi-integer $(k_{1},\dots,k_{N})$, and the $a_{l,k}$s are defined
 in Lemma \ref{lem:quadConv}.

To end the proof, note that, \[
\left|\mathbb{E}_{\infty}\left[e^{i<\mathcal{L}^{(K)}_{t},\xi>} \right]-\mathbb{E}_{\beta t}\left[e^{i<\mathcal{L}^{(K)}_{t},\xi>}\mathds{1}_{\Gamma_{\beta t,t}} \right]\right|\leq \mathbb{E}\left[\left|\frac{\mathds{1}_{\text{NonEx}}}{\mathbb{P}\left(\text{NonEx} \right)} -\frac{\mathds{1}_{N_{\beta t}>0}\mathds{1}_{\Gamma_{\beta t,t}}}{\mathbb{P}\left(N_{\beta t}>0 \right)}\right| \right]\underset{t\to \infty}{\to}0,
\]
thanks to Lemma \ref{lem:limitEvent}.
\end{proof}
\section{Proof of Theorem \ref{thm:cltFin}}
\label{proof:th3}
Since all the ideas of the proof of this theorem have been developed the preceding sections, we do not detail all the proof. The only step which needs clarification is the computation of the covariance matrix of the Laplace limit law $\mathcal{M}$. According to the proof of Theorem \ref{thm:tclAllelic}, it is given by
\begin{align*}
\mathcal{M}_{i,j}:=&\lim\limits_{t\to\infty}\frac{W(\beta t)}{e^{\alpha\beta t}}\mathbb{E}\Bigg[\left(
\frac{\psi'(\alpha)A^{(i)}\left(k_{i},(1-\beta)t,O_{i}\right)-\psi'(\alpha)c_{k_{i}}N_{(1-\beta)t}}{e^{\frac{\alpha}{2}((1-\beta)t)}}
 \right)\\&\times\left(
 \frac{\psi'(\alpha)A^{(i)}\left(k_{j},(1-\beta)t,O_{i}\right)-c_{k_{j}}N_{(1-\beta)t}}{e^{\frac{\alpha}{2}((1-\beta)t)}}
  \right)\mathds{1}_{Z^{2}_{0}((1-\beta)t,O_{2})>0} \Bigg],
\end{align*}
which is equal, thanks to \eqref{eq:note} and an easy adaptation of Lemma 6.6 in \cite{H1}, to
\begin{align*}
\mathcal{M}_{i,j}=&\lim\limits_{t\to\infty}\frac{b\psi'(\alpha)}{\alpha}\frac{W(\beta t)}{e^{\alpha\beta t}}e^{\alpha t}\mathbb{E}\left[\left(e^{-\alpha t}A(k_{i},t)-c_{k_{i}}e^{-\alpha t}N_{t} \right)\left(e^{-\alpha t}A(k_{j},t)-c_{k_{j}}e^{-\alpha t}N_{t} \right) \right].
\end{align*}
So it remains to get the limit of
\[
e^{\alpha t}\mathbb{E}\left[\left(e^{-\alpha t}\psi'(\alpha)A(k,t)-\psi'(\alpha)c_{k}e^{-\alpha t}N_{t} \right)\left(e^{-\alpha t}\psi'(\alpha)A(l,t)-c_{l}e^{-\alpha t}\psi'(\alpha)N_{t} \right) \right],
\]
as $t$ goes to infinity.
We recall that using the calculus made in the proof of Theorem 6.3 of \cite{CH}, we have
\begin{equation}
\label{eq:DerEq}
\mathbb{E}_{t}A(k,t)N_{t}=2W(t)^{2}c_{k}(t)-2W(t)\int_{[0,t]}\theta\mathbb{P}_{a}\left(Z_{0}(a)=k \right)da
+W(t)\int_{[0,t]}\theta W(a)^{-1}\mathbb{E}_{a}\left[N_{a}\mathds{1}_{Z_{0}(a)=k}\right]da.
\end{equation}
Moreover, 
\eqref{eq:R} entails
\[
\psi'(\alpha)^{2}\mathbb{E}_{t}A(k,t)A(l,t)=2W(t)^{2}c_{k}(t)c_{l}(t)+RW(t)+o(e^{-\alpha t}),
\]
with
\begin{align*}
R:=&-\psi'(\alpha)\int_{0}^{\infty}2\theta W(a)^{-1}\mathbb{P}_{a}\left(Z_{0}(a)=k\right)\mathbb{E}_{a}\left[A(l,a)\right]da\\&\quad\quad\quad+\psi'(\alpha)\int_{0}^{\infty}2\theta W(a)^{-1}\mathbb{P}_{a}\left(Z_{0}(a)=l\right)\mathbb{E}_{a}\left[A(k,a)\right]da\\
&\quad\quad\quad+\psi'(\alpha)\int_{0}^{\infty}\theta W(a)^{-1}\left(\mathbb{E}_{t}\left[A(k,t)\mathds{1}_{Z_{0}(a)=l} \right]+\mathbb{E}_{t}\left[A(l,t)\mathds{1}_{Z_{0}(a)=k} \right]\right)da.
\end{align*}
These identities allow us to obtain
\begin{align*}
&\mathbb{E}_{t}\left[\left(A(k,t)-c_{k}N_{t} \right)\left(A(l,t)-c_{l}N_{t}\right) \right]=2W(t)^{2}c_{k}(t)c_{l}(t)+e^{-\alpha t}R+o(e^{-\alpha t}),\\
&-2c_{l}c_{k}(t)W(t)^{2}+2c_{l}W(t)\int_{[0,t]}\theta\mathbb{P}_{a}\left(Z_{0}(a)=k \right)da
-c_{l}W(t)\int_{[0,t]}\theta W(a)^{-1}\mathbb{E}_{a}\left[N_{a}\mathds{1}_{Z_{0}(a)=k}\right]da\\
&-2c_{k}c_{l}(t)W(t)^{2}+2c_{l}W(t)\int_{[0,t]}\theta\mathbb{P}_{a}\left(Z_{0}(a)=l \right)da
-c_{k}W(t)\int_{[0,t]}\theta W(a)^{-1}\mathbb{E}_{a}\left[N_{a}\mathds{1}_{Z_{0}(a)=l}\right]da\\
&+c_{k}c_{l}W(t)^{2}\left(2-\frac{1}{W(t)} \right)\\
&=2W(t)^{2}\left(c_{k}(t)-c_{l}\right)\left(c_{l}(t)-c_{k}\right)+e^{-\alpha t}\frac{R}{\psi'(\alpha)}+o(e^{-\alpha t}),\\
&+2c_{l}W(t)\int_{[0,t]}\theta\mathbb{P}_{a}\left(Z_{0}(a)=k \right)da
-c_{l}W(t)\int_{[0,t]}\theta W(a)^{-1}\mathbb{E}_{a}\left[N_{a}\mathds{1}_{Z_{0}(a)=k}\right]da\\
&+2c_{l}W(t)\int_{[0,t]}\theta\mathbb{P}_{a}\left(Z_{0}(a)=l \right)da
-c_{k}W(t)\int_{[0,t]}\theta W(a)^{-1}\mathbb{E}_{a}\left[N_{a}\mathds{1}_{Z_{0}(a)=l}\right]da\\
&-c_{k}c_{l}W(t).
\end{align*}
Taking the limit as $t$ goes to infinity leads to
\begin{align}
M_{k,l}:=\lim\limits_{t\to\infty}&\psi'(\alpha)^{2}e^{-\alpha t}\mathbb{E}_{t}\left[\left(A(k,t)-c_{k}N_{t} \right)\left(A(l,t)-c_{l}N_{t}\right) \right]=R\nonumber\\
&+2\psi'(\alpha)c_{l}\int_{[0,\infty]}\theta\mathbb{P}_{a}\left(Z_{0}(a)=k \right)da
-\psi'(\alpha)c_{l}\int_{[0,\infty]}\theta W(a)^{-1}\mathbb{E}_{a}\left[N_{a}\mathds{1}_{Z_{0}(a)=k}\right]da\nonumber\\
&+2\psi'(\alpha)c_{l}\int_{[0,\infty]}\theta\mathbb{P}_{a}\left(Z_{0}(a)=l \right)da
-\psi'(\alpha)c_{k}\int_{[0,\infty]}\theta W(a)^{-1}\mathbb{E}_{a}\left[N_{a}\mathds{1}_{Z_{0}(a)=l}\right]da\nonumber\\
&-\psi'(\alpha)c_{k}c_{l}\label{eq:mlk}.
\end{align}
Finally, since
$\mathbb{P}\left(N_{t}>0\right)\sim\frac{\alpha}{b}$,
\[
\mathcal{M}_{i,j}=M_{k_{i},k_{j}}.
\]
\section{Markovian cases}
\label{proof:th4}
 We can get more information on the unknown covariance matrix $K$ in the case where the life duration distribution is exponential. Our study also cover the case $\mathbb{P}_{V}=\delta_{\infty}$ (Yule case), although it does not fit the conditions required by the Theorem \ref{thm:tclAllelic}. The reason comes from our method of calculation for $\mathbb{E}\left[A(k,t)\mathcal{E}\right]$. 
Let us consider the filtration $\left(\mathcal{F}_{t}\right)_{t\in\mathbb{R}_{+}}$, where $\mathcal{F}_{t}$ is the $\sigma$-field generated by the tree truncated above $t$ and the restriction of the mutation measure on $[0,t)$.

Then $N_{t}$ is Markovian with respect to $\mathcal{F}_{t}$ and for all positive real numbers $t\leq s$,
\begin{align*}
\mathbb{E}\left[A(k,t)N_{s}\mid\mathcal{F}_{t} \right]=A(k,t)N_{t}\mathbb{E}\left[N_{s-t}\right].
\end{align*}
So that,
\begin{align*}
\mathbb{E}\left[A(k,t)N_{s} \right]=\mathbb{E}\left[A(k,t)N_{t}\right]\left(W(s-t)-\mathbb{P}_{V}\star W(s-t) \right).
\end{align*}
By making a renormalization by $e^{-\alpha s}$ and taking the limit as $s$ goes to infinity, we get,
\[
\mathbb{E}\left[A(k,t)\mathcal{E} \right]=\psi'(\alpha)e^{-\alpha t}\mathbb{E}\left[A(k,t)N_{t}\right],
\]
since, in the Markovian case, it is known from \cite{CLR} that
\[
\frac{\alpha}{b}=\psi'(\alpha).
\]
Suppose first that $d>0$.
It follows
 that,
\begin{align*}
\mathbb{E}&\left[\left(\psi'(\alpha)A(k,t)-e^{\alpha t}c_{k}\mathcal{E}\right)\left(\psi'(\alpha)A(l,t)-e^{\alpha t}c_{l}\mathcal{E}\right) \right]
=\psi'(\alpha)^{2}\mathbb{E}_{t}\left[A(k,t)A(l,t)\right]\mathbb{P}\left(N_{t}>0\right)\\
&-c_{k}\psi'(\alpha)^{2}\mathbb{E}_{t}\left[A(l,t)N_{t}\right]\mathbb{P}\left(N_{t}>0\right)-c_{l}\psi'(\alpha)^{2}\mathbb{E}_{t}\left[A(k,t)N_{t}\right]\mathbb{P}\left(N_{t}>0\right)\\
&+2\psi'(\alpha)e^{2\alpha t}c_{k}c_{l}
\end{align*}
By \eqref{eq:survie},
\[
\mathbb{P}\left(N_{t}>0\right)=\psi'(\alpha)+\psi'(\alpha)^{2}\mu e^{-\alpha t}+o(e^{-\alpha t}),
\]
so
\begin{multline*}
\mathbb{E}\left[\left(\psi'(\alpha)A(k,t)-e^{\alpha t}c_{k}\mathcal{E}\right)\left(\psi'(\alpha)A(l,t)-e^{\alpha t}c_{l}\mathcal{E}\right) \right]\\=\mathbb{P}\left(N_{t}>0\right)\psi'(\alpha)^{2}\mathbb{E}_{t}\left[\left(A(k,t)-c_{k}N_{t} \right)\left(A(l,t)-c_{l}N_{t} \right)\right]+c_{k}c_{l}\psi'(\alpha)\left(2e^{2\alpha t}-\psi'(\alpha)\mathbb{E}_{t}\left[N_{t}^{2}\right]\mathbb{P}\left(N_{t}>0\right)\right).
\end{multline*}
Finally, since, using Proposition \ref{lem:WComp},
\[
\lim\limits_{t\to\infty}e^{-\alpha t}\left(2e^{2\alpha t}-\psi'(\alpha)\mathbb{E}_{t}\left[N_{t}^{2}\right]\mathbb{P}\left(N_{t}>0\right)\right)=\psi'(\alpha)\left(1-6\mu \right),\]
it follows from \eqref{eq:mlk},
\begin{align*}
\lim\limits_{t\to\infty}\mathbb{E}&\left[\left(\psi'(\alpha)A(k,t)-e^{\alpha t}c_{k}\mathcal{E}\right)\left(\psi'(\alpha)A(l,t)-e^{\alpha t}c_{l}\mathcal{E}\right) \right]
\\&=\psi'(\alpha)M_{k,l}+c_{k}c_{l}\psi'(\alpha)^{2}\left(1-6\mu \right)=\psi'(\alpha)M_{k,l}+c_{k}c_{l}\psi'(\alpha)^{2}\left(1-6\frac{d}{\alpha}\right),
\end{align*}
using that $\mu=\frac{1}{b\mathbb{E}V-1}$.
In the Yule case, an easy adaptation of the preceding proof leads to
\begin{align*}
\lim\limits_{t\to\infty}\mathbb{E}\left[\left(\psi'(\alpha)A(k,t)-e^{\alpha t}c_{k}\mathcal{E}\right)\left(\psi'(\alpha)A(l,t)-e^{\alpha t}c_{l}\mathcal{E}\right) \right]=M_{k,l}+c_{k}c_{l}.
\end{align*}
\section{Numerical studies}
\label{sec:num}
The purpose of this section is to analyze our approximation method and the estimation of the error by virtue of numerical experiments. There are several practical difficulties appearing when one tries to perform such study. 

The first problem, which involves no conceptual difficulties, lies only on the implementation of the formulas appearing in Theorems \ref{thm:tclAllelic}, \ref{thm:tclexp} and \ref{thm:cltFin}. In particular, the computation of the moments of type $\mathbb{E}[A(k,t)\mathds{1}_{Z_{0}(t)=l}]$ are particularly complicated (see Proposition 5.4 in \cite{CH}).
\medskip

Another difficulty is to obtain numerical approximations of the scale functions $W$ and $W_{\theta}$.  For instance, these functions appear in the computation of the covariance matrix of Theorems \ref{thm:tclexp} and \ref{thm:cltFin} or when one wants to simulate the \emph{coalescent point process}. To obtain such approximations, we need to apply numerically the Laplace inversion operator to the functions $\frac{1}{\psi}$ and $\frac{1}{\psi_{\theta}}$.

Unfortunately, the Laplace numerical inversion is a rather difficult problem (see for instance \cite{laplace1} or \cite{laplace2}) which is often computationally expensive. As a consequence, the computational cost of performing multiple numerical integration involving $W$ or $W_{\theta}$ can be  important when done with a crude method. Moreover, these methods presents rough numerical instabilities when the original function is exponentially increasing (inverting $\lambda\to\frac{1}{1-\lambda}$, whose inverse is $x\to e^{x}$, is already a tough numerical problem). 

\medskip

For all these reasons, we provide with this work a Matlab toolbox which handle all these difficulties and allows users to perform numerical experiments without having to take care of these issues.

\medskip

In this whole section, we are interested in the approximation of the frequency spectrum at a fixed time $t$ by the sequence $N_{t}(c_{k})_{k\geq1}$ (we recall that $c_{k}$ was defined in equation \eqref{eq:ck}). As a consequence, the error in this approximation are computed thanks to Theorem \ref{thm:cltFin}. The parameters of the model are set as follows:
\begin{itemize}
	\item $\mathbb{P}_{V}$ is a Rice distribution with shape parameter 1 and scale parameter 1.
	\item $b=1$.
	\item $\theta=1$.
\end{itemize}
For such parameters $\alpha$ approximately equals to $0.5$. Figure \ref{fig:evol} shows the evolution of the frequency spectrum (for $k$ between $1$ and $10$) through time. The different quantities seem to growth exponentially with rate $\alpha$ with a time-shift which depend on $k$. An interesting open question would be to understand the behavior of these shifts.
\begin{figure}
	\includegraphics[scale=0.4,trim= 100 0 20 0mm]{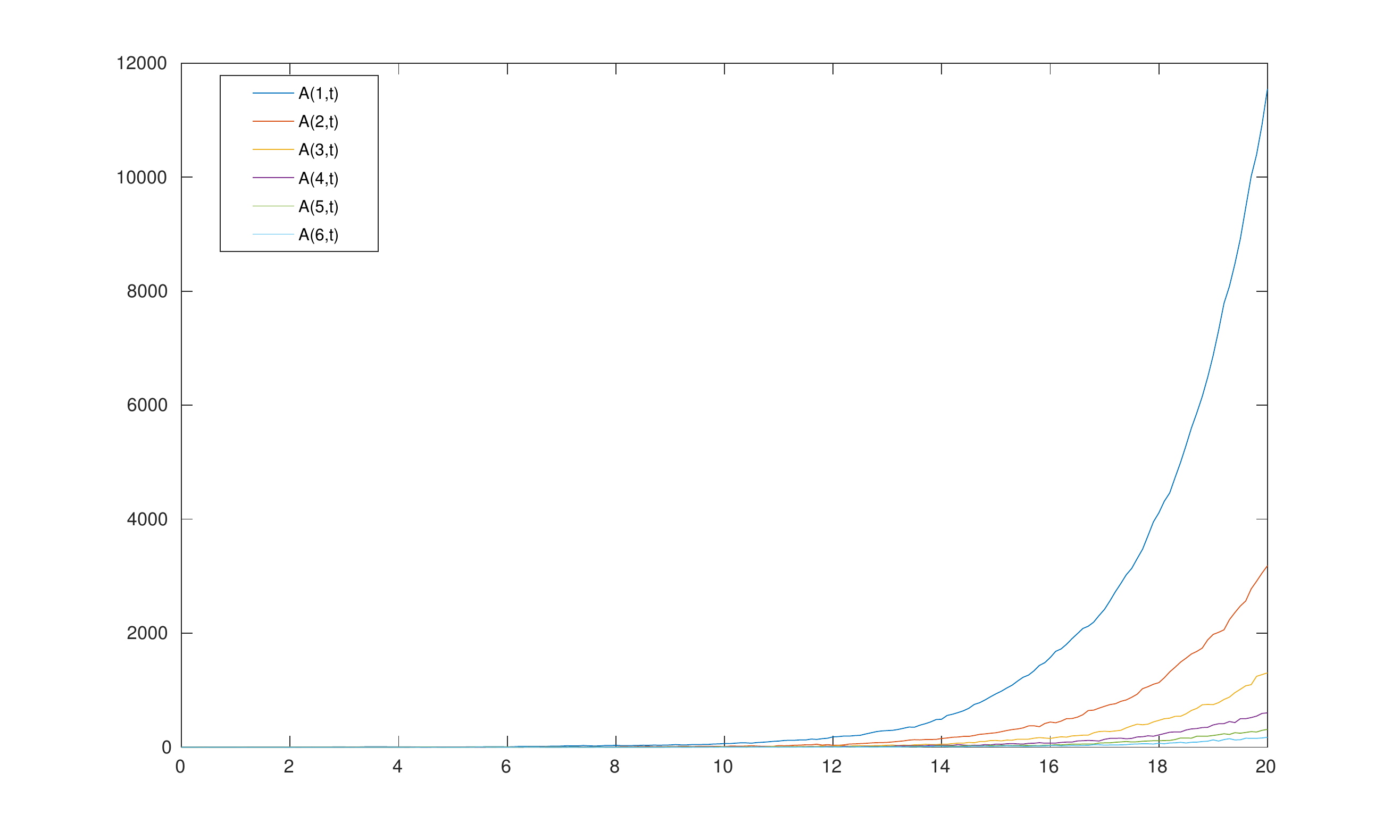}\includegraphics[scale=0.4, trim= 50 0 0 0mm]{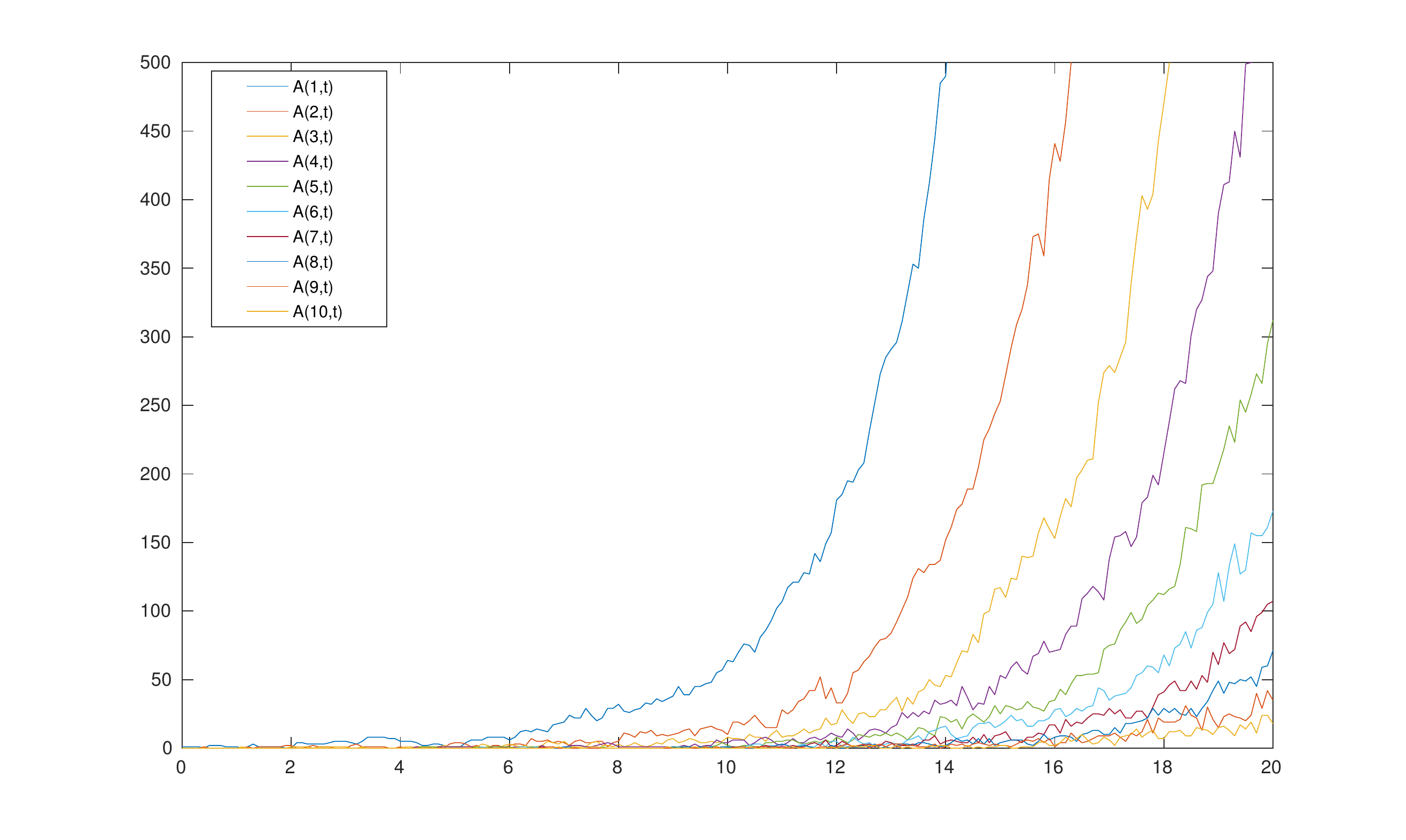}
	\caption{A simulation of the evolution of the frequency spectrum under the given model.}
	\label{fig:evol}
\end{figure}
In order to stress our methods of approximation, the first idea is to look to the renormalized frequency spectrum $\left(\frac{A(k,t)}{c_k}\right)_{k\geq1}$ which is expected to look like $(N_t,\ t \in\mathbb{R}_{+})$. As showed in Figure \ref{fig:evolDynam}, the approximation seems to be quite accurate for $k=1,2$. However, a more quantitative analysis is required. 
Figure \ref{fig:error} shows the absolute error in the approximation of $A(1,t)$ by $c_1 N_t$. This error is a little disappointing since it since to diverge when $t$ goes to infinity. However, even if, according to Figure \ref{fig:error}, the absolute error at time $20$ if of order $10^3$, the relative error shows that this error is quite small with respect to the value of $A(1,20)$.
\begin{figure}
	\includegraphics[scale=0.5, trim = -70 50 50 20 mm]{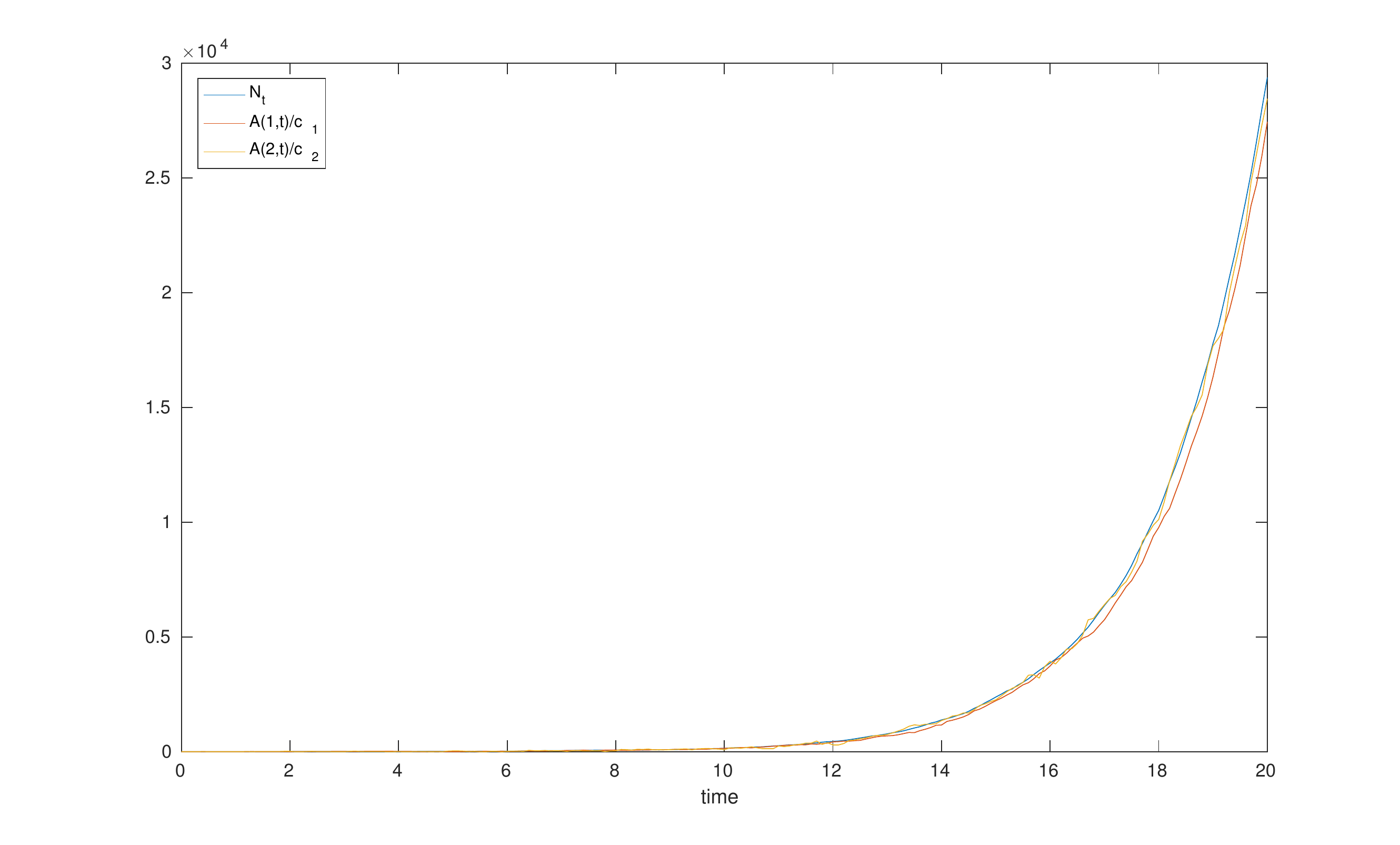}
	\caption{Evolution of the renormalized frequency spectrum $(A(k,t)/c_k)_{k\geq 1}$ under the given model.}
	\label{fig:evolDynam}
\end{figure}
\begin{figure}
	\includegraphics[scale=0.4,trim= 100 0 0 0mm]{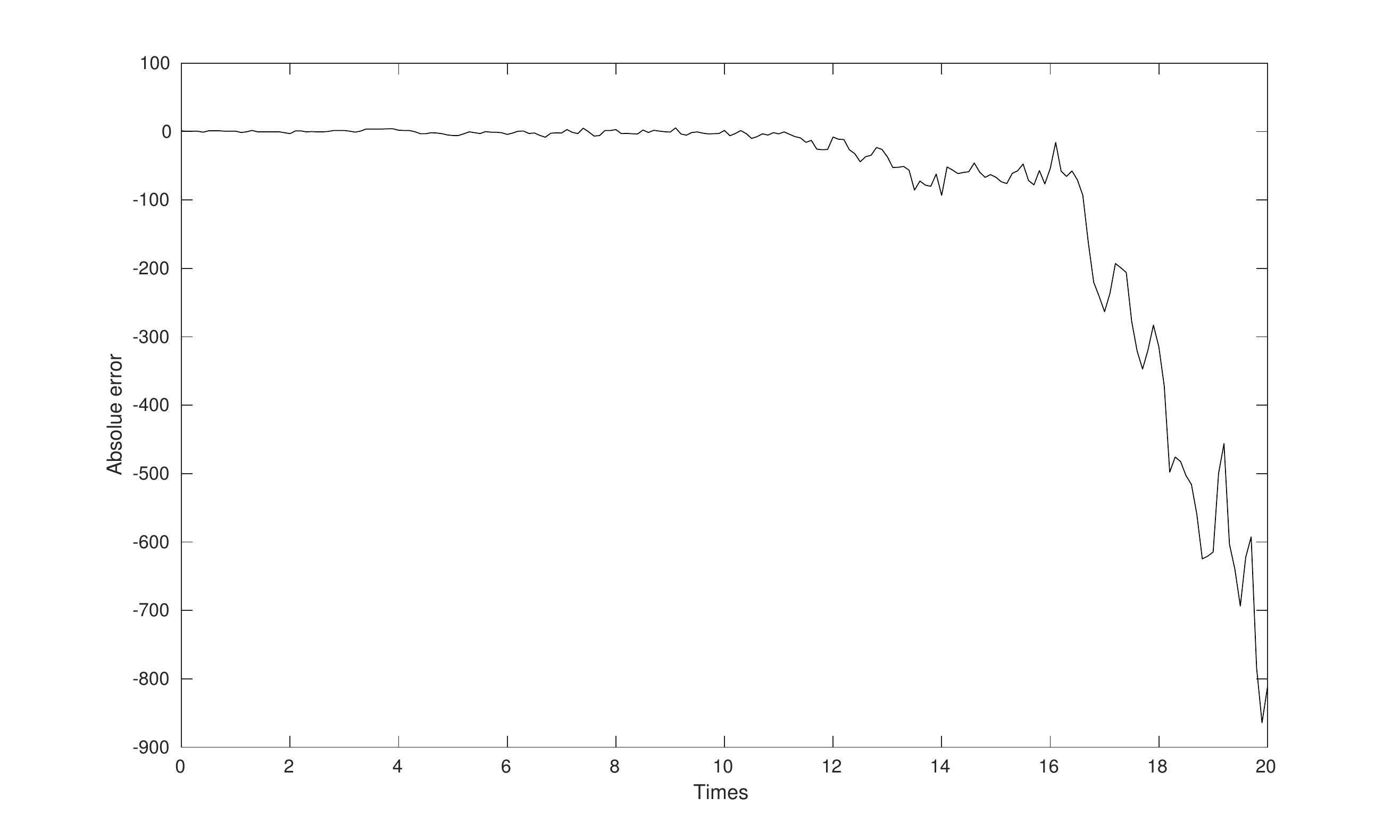}\includegraphics[scale=0.4,trim= 60 0 20 0mm]{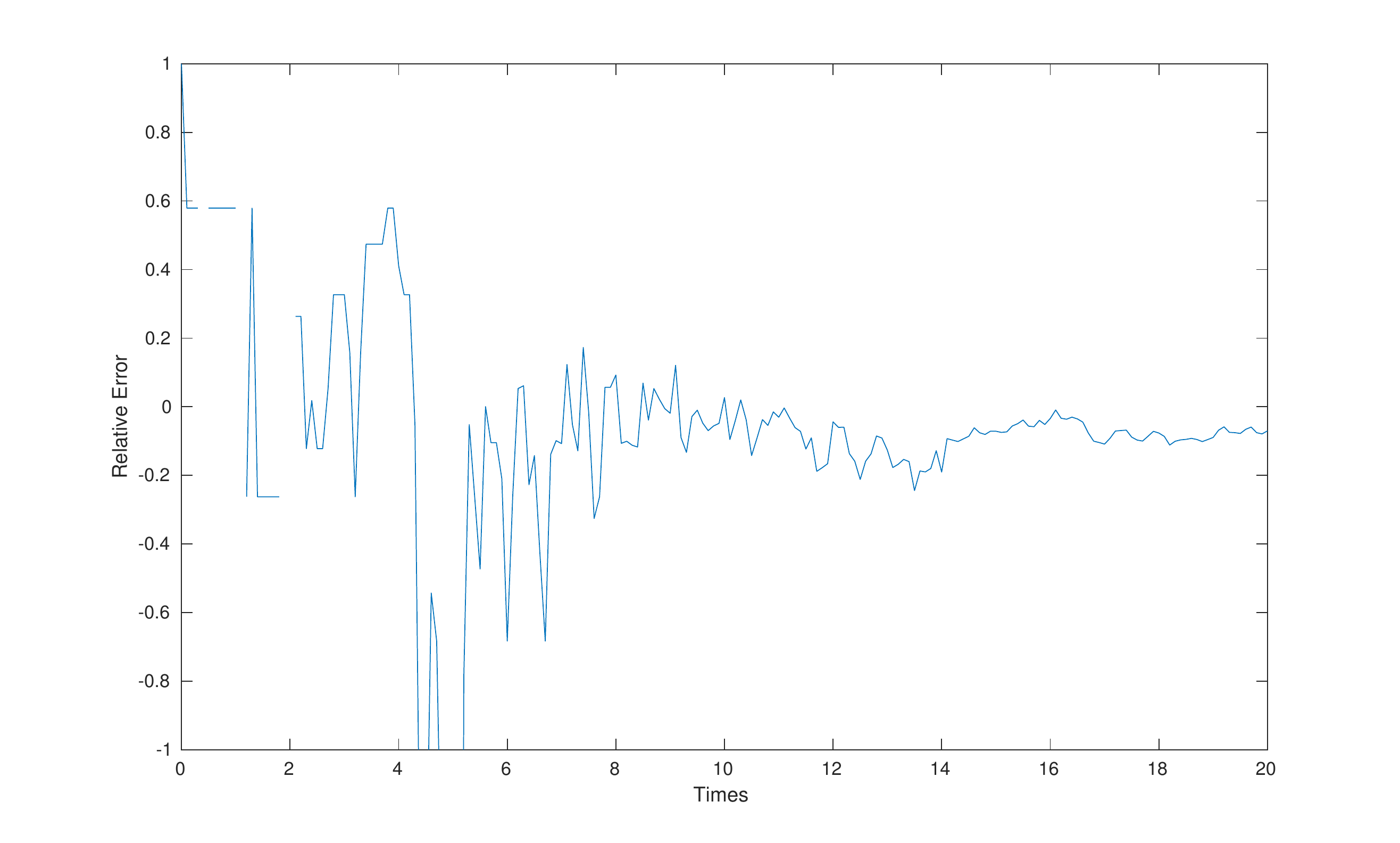}
	\caption{Absolute error (left picture) and relative (right picture) in the approximation of $A(1,t)$ by $c_1 N_t$.}
	\label{fig:error}
\end{figure}
Another question is about the speed of convergence in the central limit theorem stated in Theorem \ref{thm:cltFin}. The red curve of Figure \ref{fig:tcl1} shows the density of the Laplace distribution given in Theorem \ref{thm:cltFin} in the case of $A(1,t)$ whereas the blue histogram shows the distribution of $\psi'(\alpha)(e^{\alpha\frac{t}{2}}(A(1,t)-c_{k}N_{t}))$ for $t=10$ ($\alpha t\sim 5$ and $\mathbb{E}_t [N_t]\sim 300$) from 10000 simulations. This Figure highlights the fact that even if the taken time $t$ is quite small the distribution $\psi'(\alpha)(e^{\alpha\frac{t}{2}}(A(1,t)-c_{k}N_{t}))$ seems already close to the limiting distribution. Figure \ref{fig:tcl2} shows the same kind of behavior in the multidimensional case. To be more quantitative, Figure \ref{fig:L2err} shows the evolution in time of the distance between the density of limit distribution given in Theorem \ref{thm:cltFin} and a kernel estimation of the distribution of $\psi'(\alpha)(e^{\alpha\frac{t}{2}}(A(1,t)-c_{k}N_{t}))$ (the estimation is made from $10000$ simulations at each time). This suggest an exponential rate of convergence in Theorem \ref{thm:cltFin}. In the view of Figure \ref{fig:L2err}, one may think that Berry-Essen type results for Theorem \ref{thm:cltFin} would be quite interesting, in particular to understand how the speed of convergence is related to choice of the parameters.
\begin{figure}
	\includegraphics[scale=0.5, trim= -70 50 50 0 mm]{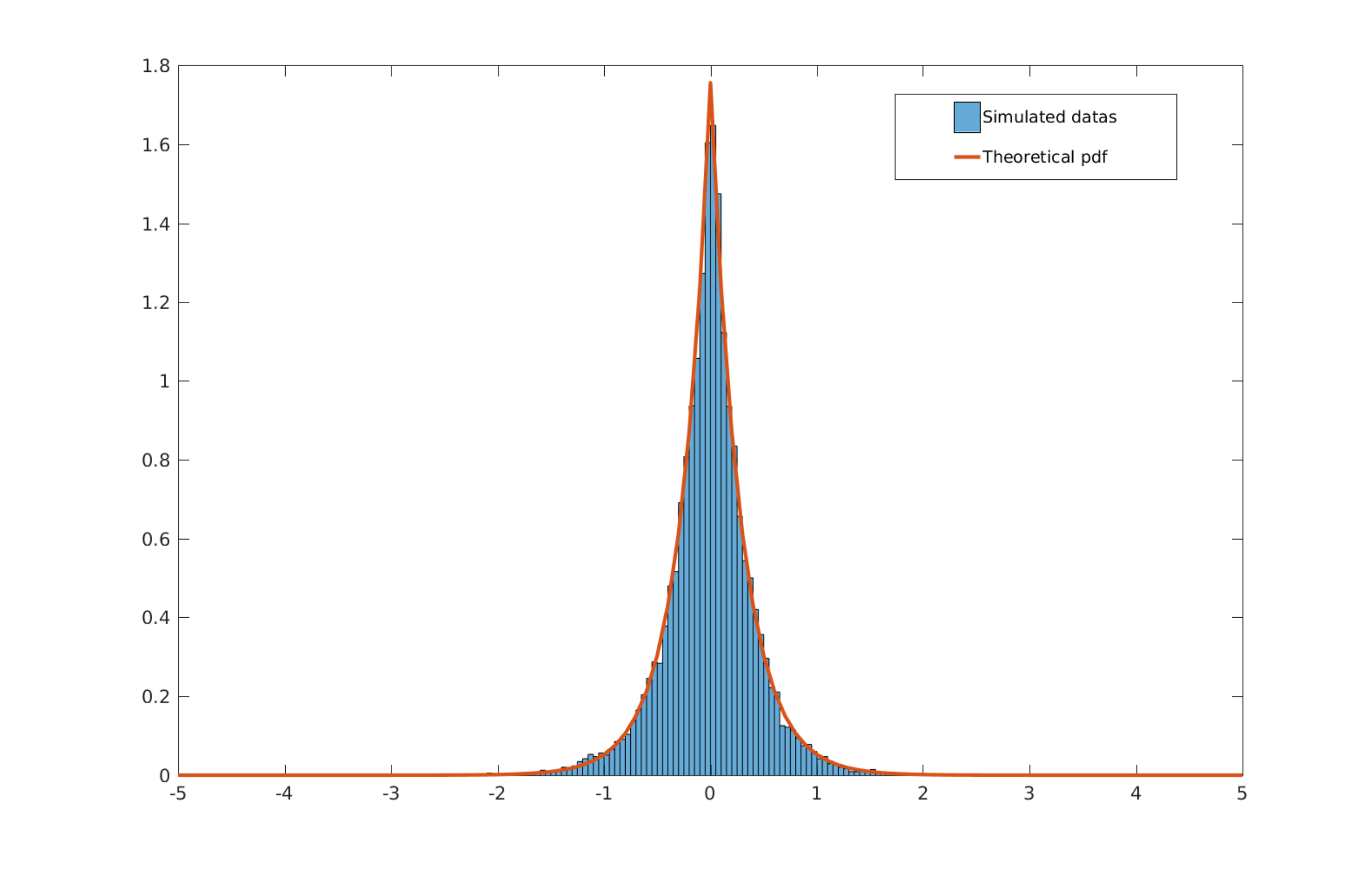}
	\caption{Distribution of the renormalized error and expected limit distribution given by our CLT.}
	\label{fig:tcl1}
\end{figure}
\begin{figure}
	\includegraphics[scale=0.45,trim=50 0 0 0 mm]{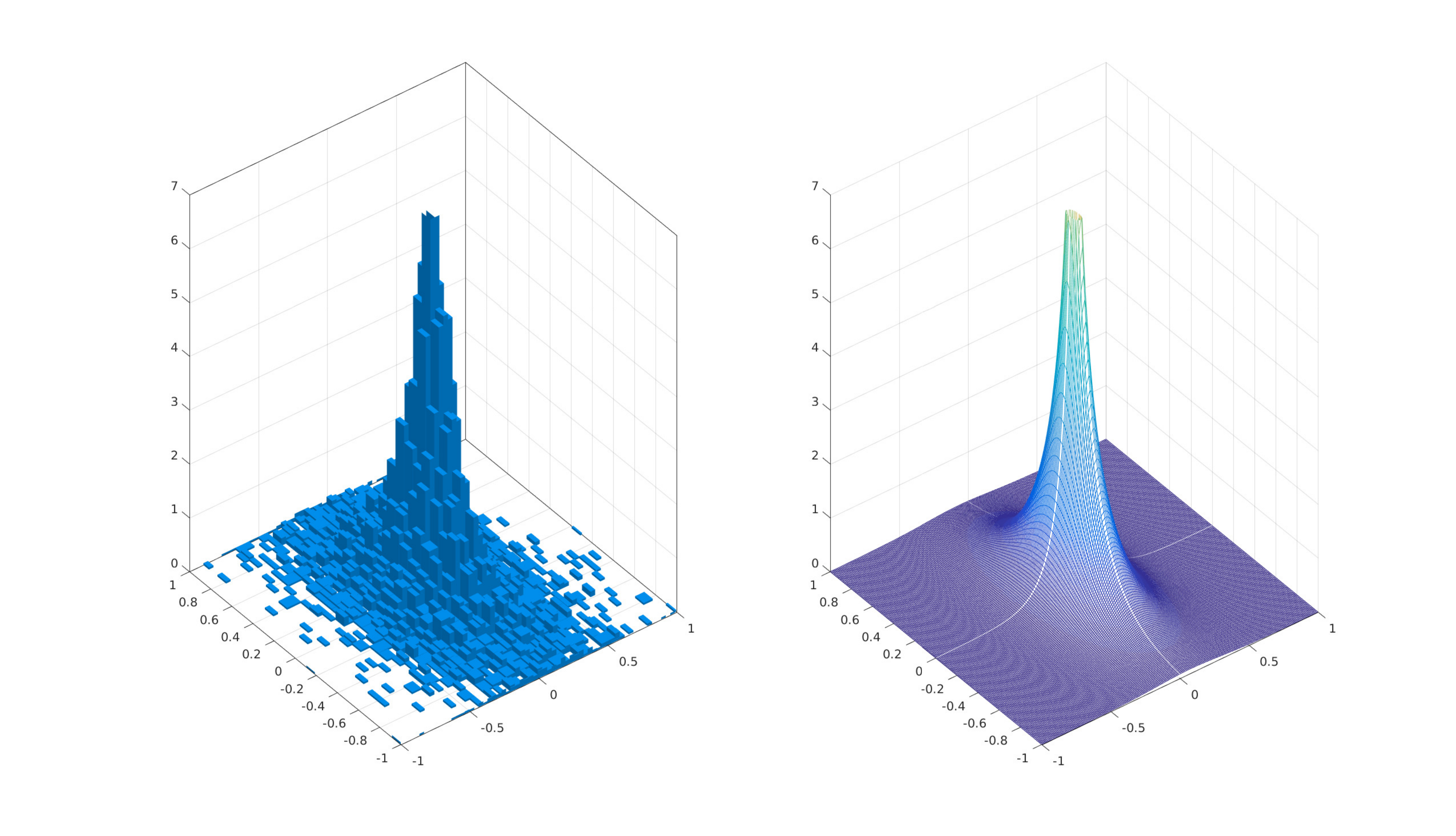}
	\caption{Joint distribution of the renormalized error (left figure) and expected limit distribution (right figure) given by our CLT.}
	\label{fig:tcl2}
\end{figure}
\begin{figure}
	\includegraphics[scale=0.5, trim = -70 50 100 50 mm]{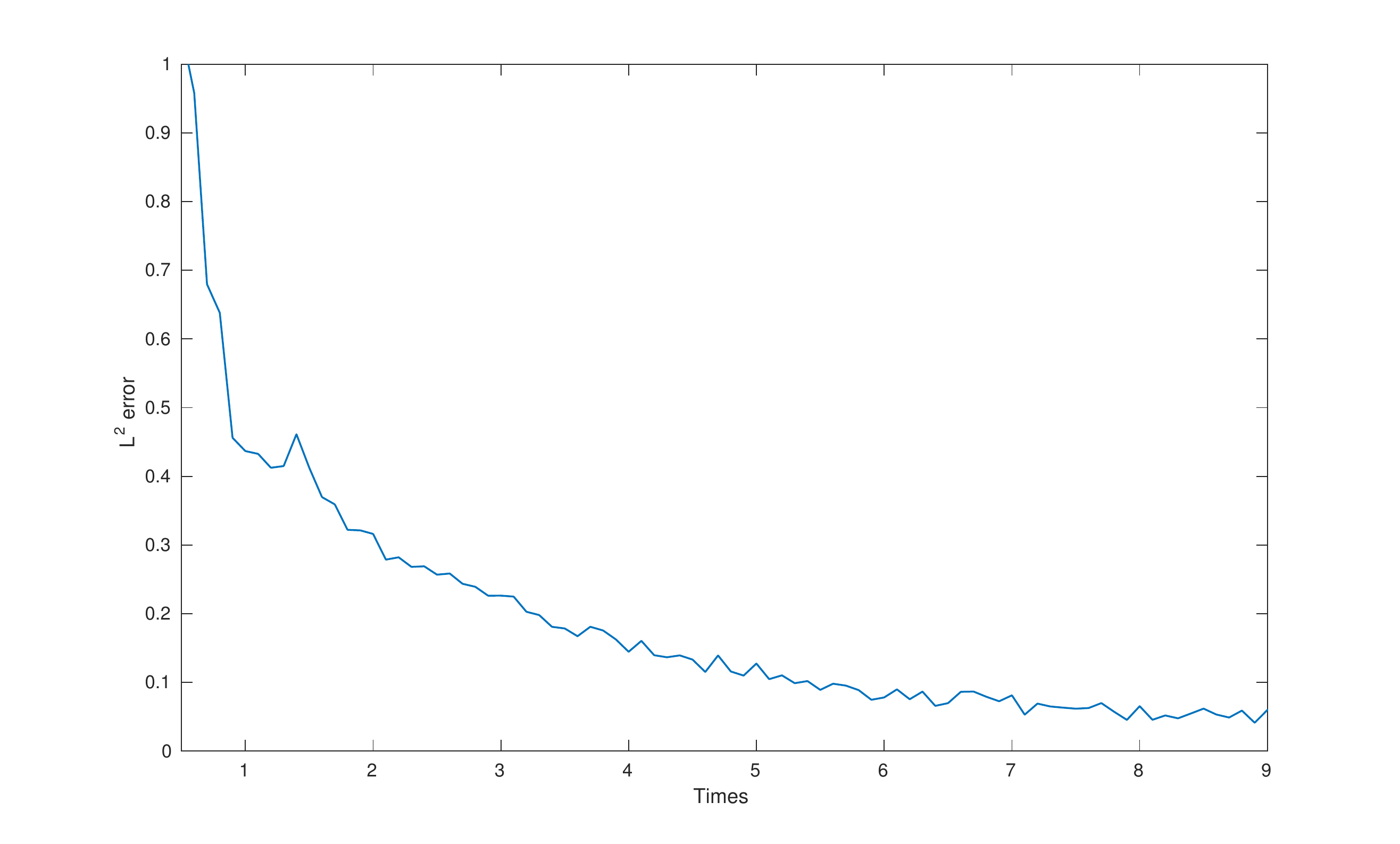}
	\caption{Estimation of the rate of convergence in $L^2$ norm.}
	\label{fig:L2err}
\end{figure}
Another interesting question which could be partially probed by simulation is the study of the behavior of the error in the clonal supercritical case. Figure \ref{fig:closup} shows a kernel estimation (from $10000$ simulation) of the density of $\psi'(\alpha)(e^{\alpha\frac{t}{2}}(A(1,t)-c_{k}N_{t}))$ in the clonal supercritical case ($\theta=0.2<\alpha$). Figure \ref{fig:closup} suggest a totally different behavior with a limit distribution which is asymmetric with respect to $0$. In particular, in the view of the shape of the distribution, one could conjecture that the limit is a skew stable distribution.

\begin{figure}
	\includegraphics[scale=0.5, trim = -70 0 50 0 mm]{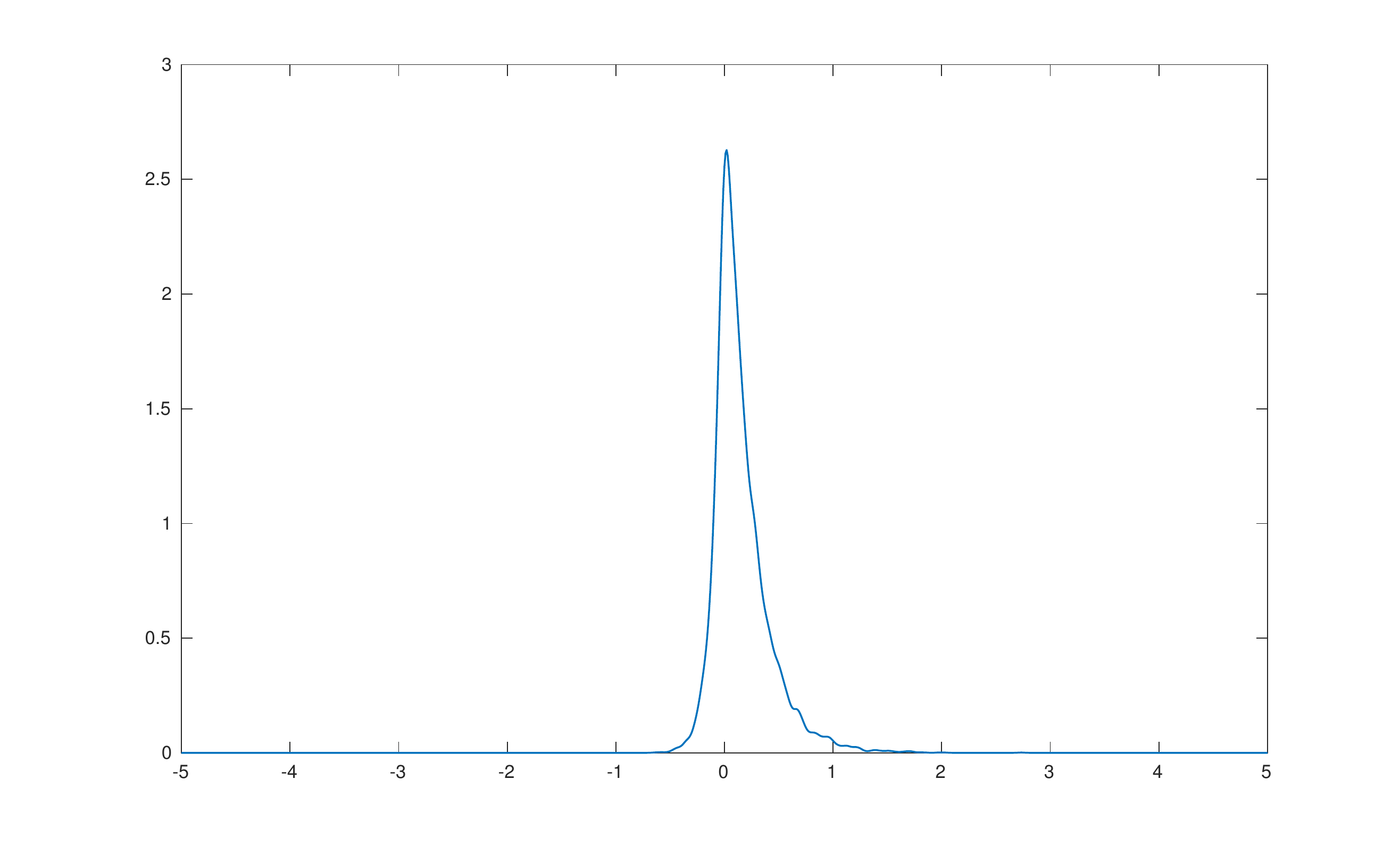}
	\caption{Kernel estimate of the probability density function of the limit distribution in the clonal supercritical case.}
	\label{fig:closup}
\end{figure}
To end this section, let us goes back to one of the motivation of this work. The following discussion dot not claim to be rigorous and is essentially formal. We recall that the Extended Haplotype Homozygosity (EHH) can be used to detect positive selection in a population \cite{sabeti}. In particular, the behavior of the frequency spectrum in this model gives a standard for the behavior of a subpopulation carrying a common allele under neutral evolution. In order to have a rigorous model to describe this phenomenon, we need to introduce a new mutation measure which is different from the one given in Section \ref{sec:models}. We define it directly on the CPP but this could be equivalently defined on the splitting tree. So let $\mathcal{P}$ be a Poisson random measure on $[0,t]\times\mathbb{N}\times\mathbb{R}_{+}$ with intensity measure $ \lambda\otimes C\otimes\lambda$, where $C$ is the counting measure on $\mathbb{N}$, then, for any mutation rate $\theta$ in $\mathbb{R}_{+}$, we define the $\theta$-mutation random measure $\mathcal{N}_{\theta}$ by
\[
\mathcal{N}_{\theta}\left(A\times B\right)=\int_{A\times B\times [0,\theta]}\mathds{1}_{H_{i}>t-a}\mathds{1}_{i<\mathcal{N}_{t}}\mathcal{P}\left(di,da,dx\right),
\]
where, as before, an atom at $(a,i)$ means that the $i$th branch experiences a mutation at time $t-a$. This construction allows to increase the mutation in consistent manner. This allows to model the type of an individual at a distance $x$ (such that the mutation rate is a function of $x$)  from the core haplotype (we refer the reader to \cite{sabeti} for more details). Now, following \cite{CH}, we can define the frequency spectrum at mutation rate $\theta$ by
\[
A^{\theta}(k,t)=\int_{[0,t]\times\mathbb{N}}\mathds{1}_{Z_{0}(i,a)=k}\mathcal{N}_{\theta}(di,da),
\]
where $Z_{0}(i,a)$ is the number of individual at time $t$ carrying the type of the $i$th individual at time $t-a$ (see \cite{CH} for more details). Let us also define $Z_{0}^{\theta}(t)$ the number of individuals carrying the type of the first individual at time $0$ when the mutation measure is given by $\mathcal{N}_{\theta}$.
As expected, the allelic partition of the population becomes thinner as $\theta$ growth.

Now, the definition of the EHH $G_{\theta}(t)$ is the probability that two uniformly sampled individuals in the population have the same type, that is
\[
G_{t}(\theta)=\frac{Z^{\theta}_{0}(t)(Z^{\theta}_{0}(t)-1)+\sum_{k\geq1}k(k-1)A^{\theta}(k,t)}{N_{t}(N_{t}-1)}.
\]
Using that 
\[
N_{t}=Z^{\theta}_{0}(t)+\sum_{k\geq 1}kA^{\theta}(k,t),
\]
this rewrite
\[
G_{t}(\theta)=\frac{(N_{t}-\sum_{k\geq 1}kA^{\theta}(k,t))(N_{t}-\sum_{k\geq 1}kA^{\theta}(k,t)-1)+\sum_{k\geq1}k(k-1)A^{\theta}(k,t)}{N_{t}(N_{t}-1)}.
\]
Finally, using the approximation
\[
(A(k,t))_{k\geq 1}\approx (c_{k})_{k\geq 1}N_{t}
\]
 proposed in this work, one could expect that
\[
G_{t}(\theta)\approx\frac{\sum_{k\geq1}k(k-1)c_{k}}{N_{t}}=\frac{\int_{0}^{\infty}2\theta e^{-\theta x}(W_{\theta}(x)-1)dx}{N_{t}}.
\]
We stress the fact that the above expression make sens only in the clonal subcritical case (in the other cases the integral in note finite). Now, we can look at the accuracy of this approximation in view of numerical simulation. Figure \ref{fig:EHH} shows the value of the EHH (when $\theta$ increase) from a simulation of the model (blue curve) and the one obtained using our approximation (red curve). In view of Figure \ref{fig:EHH}, the approximation seems pretty accurate. In order to be more quantitative, Figure \ref{fig:errorEHH} shows the relative error between the EHH and its approximation for one simulation. This shows that the error, as least for sufficiently large $\theta$, remains under $8$\%. To end, let us highlight that Theorem \ref{thm:cltFin} can be used to give confidence intervals for fixed $\theta$ but in order the construct tests of selection from curves like these of Figure \ref{fig:EHH} one would need to have functional CLT in long time for the process $( (A^{\theta}(k,t)-c^{\theta}_{k}N_{t})_{k\geq1},\ \theta\in\mathbb{R}_{+})$.

\begin{figure}
\includegraphics[scale=0.5, trim = -70 50 100 50 mm]{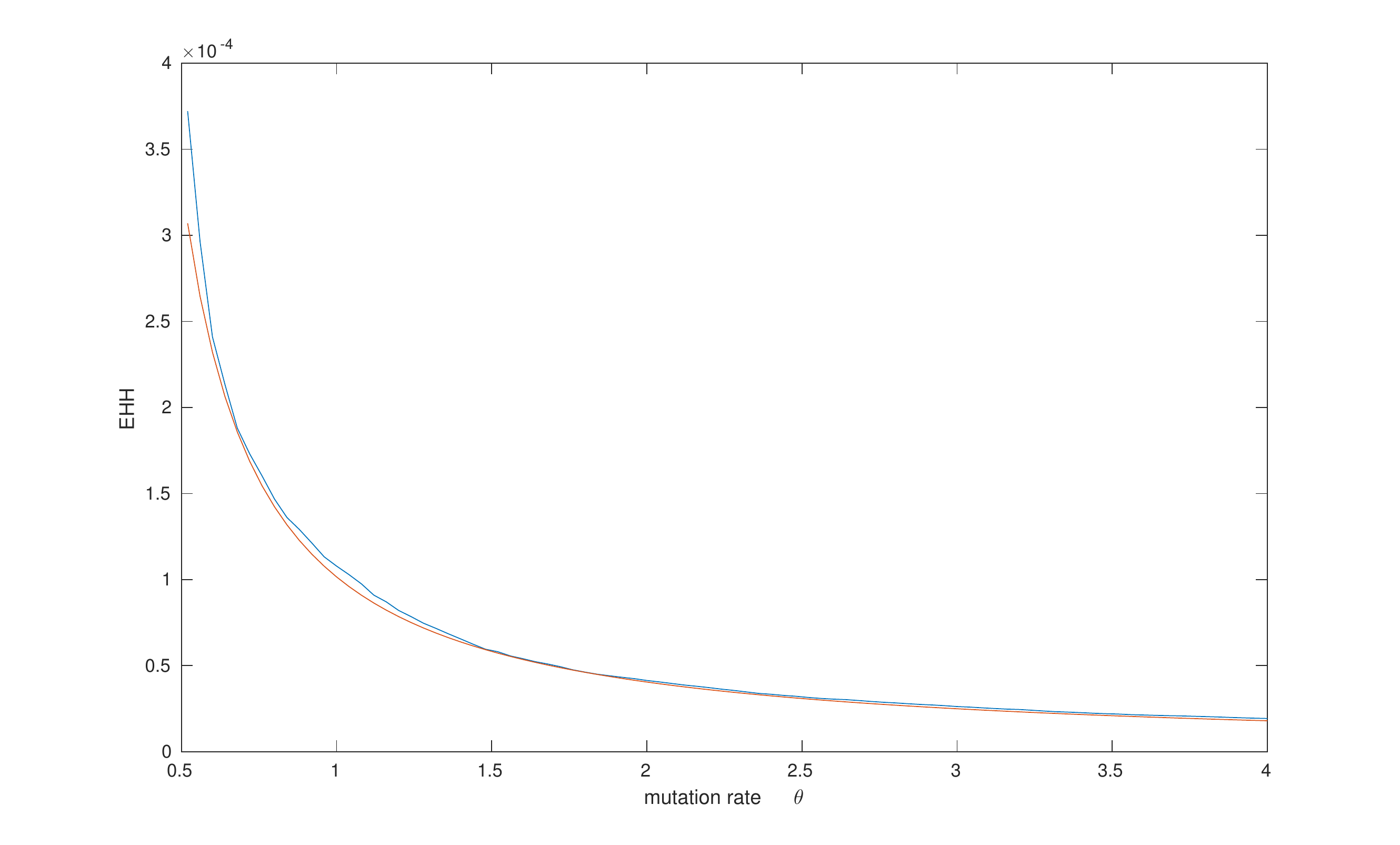}
\caption{Extended Haplotype Homozygosity (EHH) with the given approximation (red curve) and from simulated data (blue curve)}
\label{fig:EHH}
\end{figure}
\begin{figure}
	\includegraphics[scale=0.5, trim = -70 50 100 50 mm]{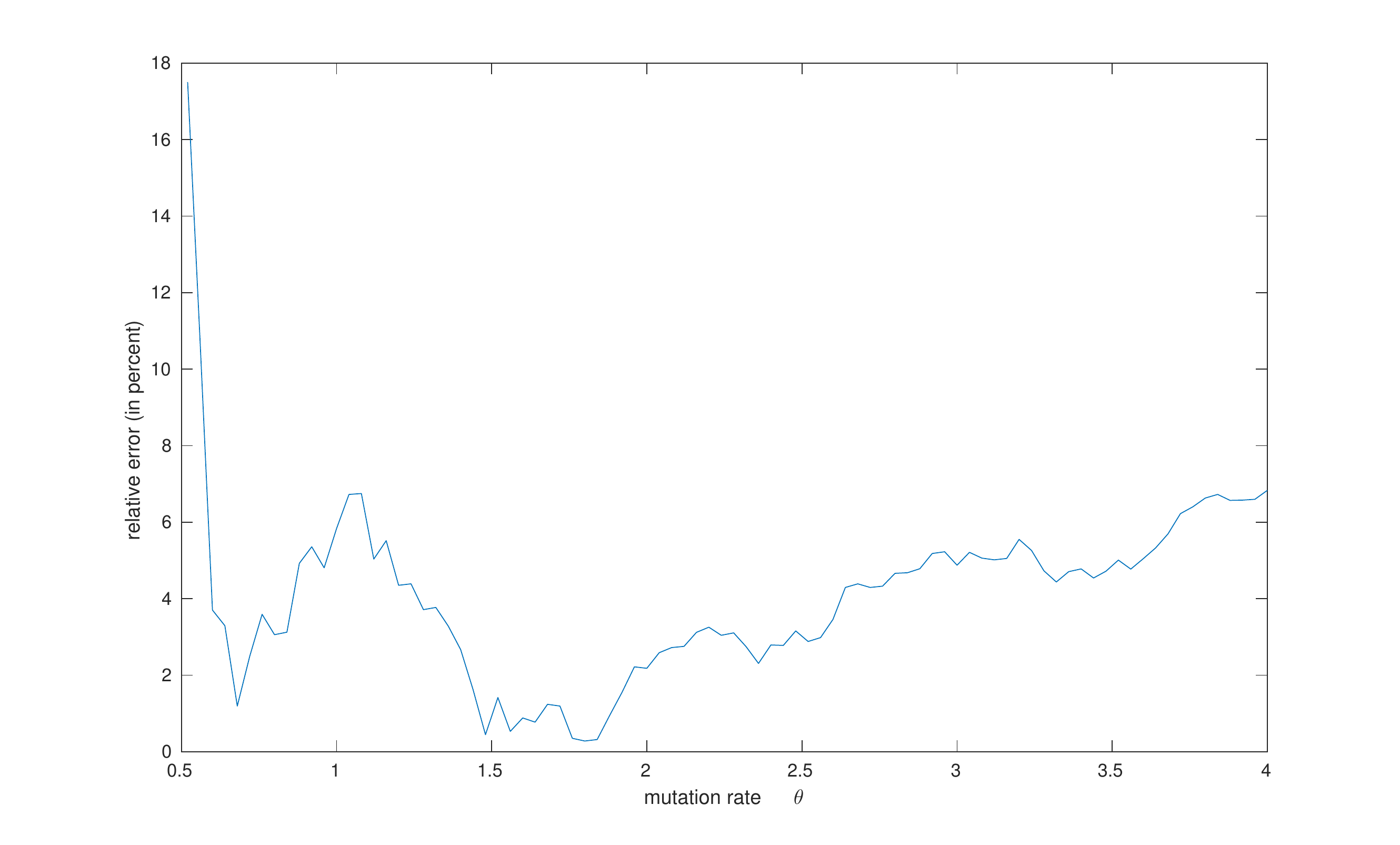}
	\caption{Relative error in the approximation of the EHH }
	\label{fig:errorEHH}
\end{figure}
\appendix
 \section{A bit of renewal theory}
 \label{ssec:renew}
 The purpose of this part is to recall some facts on renewal equations borrowed from \cite{Fel}.
 Let $h:\mathbb{R}\to\mathbb{R}$ be a function bounded on finite intervals with support in $\mathbb{R}_{+}$ and $\Gamma$ a probability measure on $\mathbb{R}_{+}$.
 The equation
 \[
 F(t)=\int_{\mathbb{R}_{+}}F(t-s)\Gamma(ds)+h(t),
 \]
 called a renewal equation, is known to admit a unique solution finite on bounded interval.
 
 Here, our interest is focused on the asymptotic behavior of $F$. We said that the function $h$ is DRI (directly Riemann integrable) if for any $\delta>0$, the quantities
 \[
 \delta\sum_{i=0}^{n}\sup_{t\in[\delta i,\delta(i+1))}f(t)
 \]
 and
 \[
 \delta\sum_{i=0}^{n}\inf_{t\in[\delta i,\delta(i+1))}f(t)
 \]
 converge as $n$ goes to infinity respectively to some real number $I_{sup}^{\delta}$ and $I_{inf}^{\delta}$,
 and
 \[
 \lim\limits_{\delta\to 0}I_{sup}^{\delta}=\lim\limits_{\delta\to 0}I_{inf}^{\delta}<\infty.
 \]
 In the sequel, we use the two following criteria for the DRI property:
 \begin{lem}
 	\label{lem:DRI}
 	Let $h$ a function as defined previously. If $h$ satisfies one of the next two conditions, then $h$ is DRI:
 	\begin{itemize}
 		\item[1.] $h$ is non-negative decreasing and classically Riemann integrable on $\mathbb{R}_{+}$,
 		\item[2.]  $h$ is c\`adl\`ag and bounded by a DRI function.
 	\end{itemize}
 \end{lem}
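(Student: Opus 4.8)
The plan is to check the defining property of direct Riemann integrability head-on in each of the two cases, using the definition recalled just above. For every $\delta>0$ one must show that the step-sums $I^{\delta}_{sup}$ and $I^{\delta}_{inf}$ are finite, and then that they admit a common finite limit as $\delta\to0$; in both cases that common value will be $\int_{0}^{\infty}h(t)\,dt$. A useful preliminary remark, valid whenever the step-sums converge absolutely, is that comparing $\delta\inf_{[\delta i,\delta(i+1))}h$ and $\delta\sup_{[\delta i,\delta(i+1))}h$ with $\int_{\delta i}^{\delta(i+1)}h$ and summing over $i$ yields $I^{\delta}_{inf}\le\int_{0}^{\infty}h\le I^{\delta}_{sup}$; hence, once finiteness is known, it suffices to prove $I^{\delta}_{sup}-I^{\delta}_{inf}\to0$ as $\delta\to0$.

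For criterion~1 I would use monotonicity directly. Since $h$ is non-increasing and non-negative, on the block $[\delta i,\delta(i+1))$ we have $\sup h=h(\delta i)$ and $\inf h\ge h(\delta(i+1))$, so the upper sum equals $\delta\sum_{i\ge0}h(\delta i)$ while the lower sum lies between $\delta\sum_{i\ge1}h(\delta i)$ and the upper sum. By the integral comparison test for non-negative non-increasing functions, $\sum_i h(\delta i)<\infty$ precisely because $\int_{0}^{\infty}h<\infty$ by hypothesis; this gives finiteness of $I^{\delta}_{sup}$ and $I^{\delta}_{inf}$. Then $0\le I^{\delta}_{sup}-I^{\delta}_{inf}\le\delta\,h(0)\to0$, so both converge to $\int_{0}^{\infty}h<\infty$.

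For criterion~2 let $g\ge0$ be a DRI majorant, $|h|\le g$. Being c\`adl\`ag, $h$ is locally bounded and has at most countably many discontinuities, hence is Riemann integrable on every compact interval, and since $\int_{0}^{\infty}g<\infty$ the improper integral $\int_{0}^{\infty}h$ converges absolutely. The bound $\sup_{[\delta i,\delta(i+1))}|h|\le\sup_{[\delta i,\delta(i+1))}g$ shows the step-sums of $h$ converge absolutely for each $\delta$ (their tails being dominated by those of $g$), so $I^{\delta}_{sup}(h)$, $I^{\delta}_{inf}(h)$ are finite and, by the preliminary remark, bracket $\int_{0}^{\infty}h$. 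It remains to show $I^{\delta}_{sup}(h)-I^{\delta}_{inf}(h)=\delta\sum_i\omega_i(\delta)\to0$, where $\omega_i(\delta)$ is the oscillation of $h$ on the $i$-th block. Here I would truncate at a large time $T$: for blocks contained in $[0,T]$ the partial oscillation sum tends to $0$ as $\delta\to0$ because $h$ is Riemann integrable on $[0,T+1]$; for blocks beyond $T$ one bounds $\omega_i(\delta)\le2\sup_{[\delta i,\delta(i+1))}g$ and uses that $\delta\sum_{i:\,\delta i>T}\sup_{[\delta i,\delta(i+1))}g$ is small, which follows by first letting $\delta\to0$ (the head converging to $\int_{0}^{T}g$ and the total to $\int_{0}^{\infty}g$) and then letting $T\to\infty$. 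Letting $\delta\to0$ and then $T\to\infty$ in the combined bound gives $I^{\delta}_{sup}(h)-I^{\delta}_{inf}(h)\to0$, and with the bracketing this forces both to converge to $\int_{0}^{\infty}h$.

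The monotone case and the domination bookkeeping are routine; the one genuinely delicate point is the truncation step in criterion~2, namely making the tail of the oscillation sum of $h$ uniformly small in $\delta$. This is exactly where the hypothesis that the majorant is DRI, rather than merely integrable, is used: it guarantees that $\delta\sum_i\sup_{[\delta i,\delta(i+1))}g\to\int_{0}^{\infty}g$ as $\delta\to0$, so that no mass of $g$ escapes to infinity; and it is where the c\`adl\`ag regularity of $h$ enters, through Riemann integrability on compacts. I would also note that the two criteria dovetail in practice: criterion~1 supplies the elementary DRI functions (decreasing exponentials and the like) that serve as majorants when criterion~2 is invoked in the body of the paper.
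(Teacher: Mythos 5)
Your proof is correct. Note, however, that the paper itself gives no proof of this lemma: it is stated in the appendix as a classical fact recalled from Feller's book (see also Asmussen, \emph{Applied Probability and Queues}, Ch.\ V, Prop.\ 4.1), so there is no argument in the paper to compare against. What you have written is precisely the standard textbook proof: for the monotone case, the telescoping bound $I^{\delta}_{sup}-I^{\delta}_{inf}\le \delta h(0)$ together with the integral comparison test; for the domination case, splitting the oscillation sum at a truncation level $T$, handling $[0,T]$ via Riemann integrability of the a.e.-continuous function $h$ on compacts, and controlling the tail by the tail of the upper step-sums of the DRI majorant $g$. The only points worth making fully explicit are that a DRI majorant is automatically bounded and Riemann integrable on compacts (both follow from the finiteness and convergence of its step-sums, and you use both facts implicitly), and that the vanishing of the oscillation sums on $[0,T]$ for uniform partitions of small mesh is the Darboux criterion specialized to equidistant partitions. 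Neither is a gap.
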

 We can now state the next result, which is constantly used in the sequel.
 \begin{thm}
 	\label{thm:nonLatticeRen}
 	Suppose that $\Gamma$ is non-lattice, and $h$ is DRI, then
 	\[
 	\lim\limits_{t\to\infty}F(t)=\gamma\int_{\mathbb{R}_{+}}h(s)ds,
 	\]
 	with \[
 	\gamma:=\left(\int_{\mathbb{R}_{+}}s\ \Gamma(ds)\right)^{-1},
 	\]
 	if the above integral is finite, and zero otherwise.

 \end{thm}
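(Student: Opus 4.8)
The plan is to run the classical argument for the key renewal theorem in three stages: reduce $F$ to the convolution of $h$ with the renewal measure, establish Blackwell's renewal theorem in the non-lattice case, and then pass from indicators of intervals to a general directly Riemann integrable $h$ by a sandwiching argument.

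First I would make the solution explicit. Set $U:=\sum_{n\geq0}\Gamma^{\ast n}$ with $\Gamma^{\ast0}=\delta_0$. Since $\Gamma$ is non-lattice it is not the Dirac mass at $0$, so the partial sums of an i.i.d.\ $\Gamma$-sequence tend to infinity and $\Gamma^{\ast n}([0,T])\to0$ fast enough that $U([0,T])<\infty$ for every $T$; one then checks directly that $F:=U\ast h$ (meaning $F(t)=\int_{[0,t]}h(t-s)\,U(ds)$) is bounded on bounded intervals and solves $F=\Gamma\ast F+h$. By the uniqueness recalled above — the difference $g$ of two locally bounded solutions satisfies $g=\Gamma^{\ast n}\ast g$ for all $n$, hence $|g(t)|\leq\sup_{[0,t]}|g|\cdot\Gamma^{\ast n}([0,t])\to0$ — this $U\ast h$ is \emph{the} solution.

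Second comes the analytic heart of the matter, Blackwell's renewal theorem: if $\Gamma$ is non-lattice with mean $\mu:=\int_{\mathbb{R}_+}s\,\Gamma(ds)\in(0,\infty]$, then for every $a>0$
\[
\lim_{t\to\infty}U((t,t+a])=\frac{a}{\mu},
\]
with the right-hand side read as $0$ when $\mu=\infty$. I would prove this by a coupling argument (Lindvall): couple the ordinary renewal process to a stationary one, whose inter-renewal law is $\Gamma$ but whose first point carries the equilibrium distribution (which exists precisely because $\mu<\infty$); the non-lattice hypothesis is exactly what lets the two sequences of renewal epochs be coupled to within an arbitrarily small shift almost surely, so their counting measures agree asymptotically, while the stationary process has renewal measure $a/\mu$ on every interval of length $a$. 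When $\mu=\infty$ a truncation of $\Gamma$ together with monotonicity gives $U((t,t+a])\to0$. (A Fourier-analytic proof via the Choquet--Deny lemma is also available.)

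Third I would bootstrap to a general DRI $h$. Fix $\delta>0$, partition $\mathbb{R}_+$ into $I_j=[\delta j,\delta(j+1))$, and set $M_j:=\sup_{I_j}h$, $m_j:=\inf_{I_j}h$. Then $\sum_j m_j\mathds{1}_{I_j}\leq h\leq\sum_j M_j\mathds{1}_{I_j}$, hence $\sum_j m_j\,U\ast\mathds{1}_{I_j}(t)\leq F(t)\leq\sum_j M_j\,U\ast\mathds{1}_{I_j}(t)$, where $U\ast\mathds{1}_{I_j}(t)=U((t-\delta(j+1),\,t-\delta j])\to\delta/\mu$ by Blackwell's theorem. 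The elementary renewal theorem gives the uniform bound $C_\delta:=\sup_{s\geq0}U((s,s+\delta])<\infty$, and direct Riemann integrability gives $\sum_j\sup_{I_j}|h|<\infty$; together these allow dominated convergence (in the counting measure in $j$) and hence the interchange of $\lim_{t\to\infty}$ with the sum, yielding
\[
\mu^{-1}\delta\sum_j m_j\ \leq\ \liminf_{t\to\infty}F(t)\ \leq\ \limsup_{t\to\infty}F(t)\ \leq\ \mu^{-1}\delta\sum_j M_j.
\]
Letting $\delta\downarrow0$, both extremes converge to $\int_{\mathbb{R}_+}h(s)\,ds$ by the definition of direct Riemann integrability, so $F(t)\to\mu^{-1}\int_{\mathbb{R}_+}h(s)\,ds=\gamma\int_{\mathbb{R}_+}h(s)\,ds$, which is $0$ when $\mu=\infty$. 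The main obstacle is Blackwell's theorem: it is the only step where the non-lattice hypothesis genuinely enters and where there is real work; the Neumann-series representation and the step-function sandwiching are routine once it is in hand, the one delicate point there being the interchange of $\lim_t$ with the infinite sum, handled by the uniform bound $C_\delta$ on $U$ over blocks of length $\delta$ together with the DRI tail estimate.
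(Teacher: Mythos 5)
This theorem is stated in the paper's appendix as a recalled classical fact (the key renewal theorem), cited from Feller without any proof, so there is no in-paper argument to compare against. Your reconstruction is the standard modern proof and is correct: the Neumann-series representation $F=U\ast h$ with $U=\sum_n\Gamma^{\ast n}$ together with the uniqueness argument, Blackwell's renewal theorem established by the Lindvall coupling with the stationary (equilibrium-delayed) renewal process in the finite-mean case and by truncation when $\mu=\infty$, and the sandwich between step functions whose validity is exactly what direct Riemann integrability guarantees. Two minor remarks: the uniform bound $C_\delta=\sup_s U((s,s+\delta])<\infty$ that you use to justify interchanging the limit with the sum comes from the subadditivity (regeneration) property of the renewal measure, $U((s,s+\delta])\leq U([0,\delta])$, rather than from the elementary renewal theorem, which only controls the Ces\`aro average $U([0,t])/t$; and in the infinite-mean case the comparison with the truncated renewal process is a comparison of $\limsup$'s rather than a pointwise domination of window counts, though the conclusion $U((t,t+a])\to0$ is indeed standard. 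Neither point affects the correctness of the argument. Note also that Feller's own proof proceeds by a selection/Choquet--Deny-type argument rather than by coupling, so your route is a legitimate alternative to the one the cited reference uses.
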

 \begin{rem}
 	\label{rem:1}
 	In particular, if we suppose that $\Gamma$ is a measure with mass lower than $1$, and that there exists a constant $\alpha\geq0$ such that
 	\[
 	\int_{\mathbb{R}_{+}}e^{\alpha t}\Gamma(dt)=1,
 	\]
 	then, one can perform the change a measure
 	\[
 	\widetilde{\Gamma}(dt)=e^{\alpha t}\Gamma(dt),
 	\]
 	in order to apply Theorem  \ref{thm:nonLatticeRen} to a new renewal equation to obtain the asymptotic behavior of $F$. (See \cite{Fel} for details).
 	This method is also used in the sequel.
 \end{rem}

\section{Formula for the fourth moment of the error}
\begin{lem}
\label{lem:fourthMoment}
\begin{align*}
\mathbb{E}_{t}&\left[\left(A(k,t)-c_{k}N_{t}\right)^{4} \right]=4\int_{[0,t]}\theta\frac{W(t)}{W(a)}\mathbb{E}_{a}\left[\mathds{1}_{Z_{0}(a)=k}\left(A(k,a)-c_{k}N_{a}\right)^{3}\right]da\\
&+48\int_{[0,t]}\theta\frac{W(t)^{2}}{W(a)^{2}}\left(1-\frac{W(a)}{W(t)}\right)\mathbb{E}_{a}\left[\mathds{1}_{Z_{0}(a)=k}N_{a}A(k,a)\right]\mathbb{E}_{a}\left[\left(c_{k}N_{a}-A(k,a)\right)\right]da\\
&+24\int_{[0,t]}\theta\frac{W(t)^{2}}{W(a)^{2}}\left(1-\frac{W(a)}{W(t)}\right)\mathbb{E}_{a}\left[\mathds{1}_{Z_{0}(a)=k}N_{a}^{2}\right]\mathbb{E}_{a}\left[\left(A(k,a)-c_{k}N_{a}\right)\right]da\\
&+24\int_{[0,t]}\theta\frac{W(t)^{2}}{W(a)^{2}}\left(1-\frac{W(a)}{W(t)}\right)\mathbb{E}_{a}\left[\mathds{1}_{Z_{0}(a)=k}A(k,a)^{2}\right]\mathbb{E}_{a}\left[\left(A(k,a)-c_{k}N_{a}\right)\right]da\\
&+8\int_{[0,t]}\theta\frac{W(t)^{2}}{W(a)^{2}}\left(1-\frac{W(a)}{W(t)}\right)\mathbb{P}_{a}\left(Z_{0}(a)=k\right)\mathbb{E}_{a}\left[\left(A(k,a)-c_{k}N_{a}\right)^{3}\right]da\\
&+48\int_{[0,t]}\theta\frac{W(t)^{2}}{W(a)^{2}}\left(1-\frac{W(a)}{W(t)}\right)\mathbb{E}_{a}\left[\mathds{1}_{Z_{0}(a)=k}A(k,a)\right]\mathbb{E}_{a}\left[\left(A(k,a)-c_{k}N_{a}\right)^{2}\right]da\\
&+72\int_{[0,t]}\theta\frac{W(t)^{3}}{W(a)^{3}}\left(1-\frac{W(a)}{W(t)}\right)^{2}\mathbb{E}_{a}\left[\mathds{1}_{Z_{0}(a)=k}\left(A(k,a)-c_{k}N_{a}\right)\right]\mathbb{E}_{a}\left[\left(A(k,a)-c_{k}N_{a}\right)\right]^{2}da\\
&+72\int_{[0,t]}\theta\frac{W(t)^{3}}{W(a)^{3}}\left(1-\frac{W(a)}{W(t)}\right)^{2}\mathbb{P}_{a}\left(Z_{0}(a)=k\right)\mathbb{E}_{a}\left[\left(A(k,a)-c_{k}N_{a}\right)^{2}\right]\mathbb{E}_{a}\left[A(k,a)-N_{a}c_{k} \right]da\\
&+96\int_{[0,t]}\theta\frac{W(t)^{4}}{W(a)^{4}}\left(1-\frac{W(a)}{W(t)}\right)^{3}\mathbb{P}_{a}\left(Z_{0}(a)=k\right)\mathbb{E}_{a}\left[\left(A(k,a)-c_{k}N_{a}\right)\right]^{3}da+c_{k}^{4}\mathbb{E}_{t}N_{t}^{4}\\
\end{align*}

\end{lem}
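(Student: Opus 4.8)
The identity is an exact formula for a $\mathbb{P}_t$-moment, so I would not use the renewal/DRI machinery of the appendix at all; instead the plan is to re-run, one power higher, the computation that produces \eqref{eq:Mom2akate} and \eqref{eq:DerEq} in \cite{CH}. I would start from the binomial expansion
\[\left(A(k,t)-c_kN_t\right)^{4}=\sum_{j=0}^{4}\binom{4}{j}(-c_k)^{4-j}A(k,t)^{j}N_t^{4-j},\]
whose $j=0$ term is exactly the last summand $c_k^4\,\mathbb{E}_tN_t^{4}$ of the statement; for $j\ge 1$ each mixed moment $\mathbb{E}_t[A(k,t)^{j}N_t^{4-j}]$ is then handled on the coalescent point process.

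For a mixed moment with $j\ge 1$ there is at least one factor $A(k,t)$, and I would apply the recursive representation \eqref{eq:recFormula} in the general form of \cite[Thm.~3.1]{CH} (allowing $N_t$ among the factors): condition on the founding mutation of one of the size-$k$ families attached to a chosen $A(k,t)$-factor. If it falls at time $t-a$ on a branch alive at time $t-a$, it carves out of the CPP an independent coalescent point process of height $a$ whose clonal family has the prescribed size $k$ — whence the indicators $\mathds{1}_{Z_0(a)=k}$ — while the remaining population alive at $t$ is spread over the $\widetilde N^{(t)}_{t-a}$ further i.i.d.\ height-$a$ sub-CPPs grafted on a skeleton of height $t-a$; recall from \cite{CH} that, under $\mathbb{P}_t$, $\widetilde N^{(t)}_{t-a}$ is geometric with parameter $W(a)/W(t)$ and independent of those subtrees. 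Expanding the surviving $A(k,t)$- and $N_t$-factors over the distinguished subtree and the others, integrating against the mutation measure (intensity $\theta\,da$ per branch), and using the factorial moments $\mathbb{E}_t[\widetilde N^{(t)}_{t-a}(\widetilde N^{(t)}_{t-a}-1)\cdots(\widetilde N^{(t)}_{t-a}-m+1)]=m!\,(W(t)/W(a))^{m}(1-W(a)/W(t))^{m-1}$ for $m=1,2,3,4$ produces precisely the weights $\theta\,\frac{W(t)^{m}}{W(a)^{m}}\big(1-\frac{W(a)}{W(t)}\big)^{m-1}$ and the integer prefactors of the statement, $m$ being the number of distinct subtrees among which the three remaining factors are shared (the distinguished one included).

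The last step is recombination: within each integral, the $\mathbb{P}_a$-moments of $A(k,\cdot)$ and $N_\cdot$ carried by the non-distinguished subtrees, reweighted by the $(-c_k)$'s coming from the allocated $N_t$-factors, collapse onto moments of the error $A(k,a)-c_kN_a$; on the distinguished subtree the $\mathds{1}_{Z_0(a)=k}$ is retained and the accompanying factors combine, fully or only partially, into the mixed expressions $\mathbb{E}_a[\mathds{1}_{Z_0(a)=k}(\cdots)]$ appearing in the statement. Summing over $j$ in the binomial expansion and over all allocations of the three remaining factors, and simplifying the change-of-conditioning factors $W(t)/W(a)$, should yield the displayed formula.

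The bottleneck is precisely this combinatorial bookkeeping: tracking which of the four factors is distinguished (the overall $4$ in the first term), how the other three are allocated between the distinguished subtree and the $\widetilde N^{(t)}_{t-a}-1$ remaining ones, and which of them are $A$'s and which are $N$'s, so as to reproduce the coefficients $4,48,24,24,8,48,72,72,96$ together with the exact moment expressions — and to check, as in the derivation of \eqref{eq:Mom2akate}, that every family straddling depth $a$ is counted correctly. Everything else is the lengthy but routine computation to which the ``technicality'' remark refers.
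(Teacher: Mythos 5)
Your plan is essentially the paper's own proof: the paper likewise reduces $\mathbb{E}_t\big[(A(k,t)-c_kN_t)^4\big]$ to the mixed moments $\mathbb{E}_t[A(k,t)^jN_t^{4-j}]$, computes each via the recursive CPP representation \eqref{eq:recFormula} together with \cite[Thm.~3.1]{CH} and the identity $N_t=\sum_{i=1}^{N^{(t)}_{t-a}}N^{(i)}_a$, splits the multi-sum over grafted subtrees according to coincidences of indices so as to exploit independence, and uses the factorial moments of the geometric number of sub-CPPs to produce the weights $\theta\,W(t)^{m}W(a)^{-m}\big(1-W(a)/W(t)\big)^{m-1}$ before reassembling everything into moments of $A(k,a)-c_kN_a$. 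Like you, the paper carries the index-splitting out explicitly only for $\mathbb{E}_t[A(k,t)^4]$ and treats the remaining combinatorial bookkeeping as routine, so your proposal matches it both in method and in level of detail.
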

\begin{proof}
The proof of this Lemma lies on the calculation of the expectation of each term in the development of
\[
\left(A(k,t)-c_{k}N_{t}\right)^{4}.
\]
To make this, we intensively use the relation \eqref{eq:recFormula} and the method developed in \cite{CH}.
We begin by computing \[
\mathbb{E}_{t}\left[A(k,t)^{4}\right].
\]
Formula \eqref{eq:recFormula} gives us,
\begin{align}
\label{eq:decompInd}
A(k,t)^{4}=&4\int_{[0,t]\times \mathbb{N}}\mathds{1}_{Z^{i}_{0}(a)=k_{i}}\sum_{u_{1:3}=1}^{N^{(t)}_{t-a}}\prod_{\underset{i\neq j}{j=1}}^{3}A^{(u_{j})}(k,a)\mathcal{N}\left(da,di \right)\notag\\
=&4\int_{[0,t]\times \mathbb{N}}\mathds{1}_{Z^{i}_{0}(a)=k}A^{i}(k,a)A^{i}(k,a)A^{i}(k,a)\mathcal{N}(da,di)\notag\\
&+4\int_{[0,t]\times \mathbb{N}}\mathds{1}_{Z^{i}_{0}(a)=k}\sum_{\underset{j_{1}\neq j_{2}\neq j_{3}\neq i}{j_{1},j_{2},j_{3}=1}}^{N^{(t)}_{t-a}}A^{j_{1}}(k,a)A^{j_{2}}(k,a)A^{j_{3}}(k,a)\mathcal{N}(da,di)\notag\\
&+12\int_{[0,t]\times \mathbb{N}}\mathds{1}_{Z^{i}_{0}(a)=k}A^{i}(k,a)A^{i}(k,a)\sum_{j=1, j\neq i}^{N^{(t)}_{t-a}}A^{j}(k,a)\mathcal{N}(da,di)\notag\\
&+4\int_{[0,t]\times \mathbb{N}}\mathds{1}_{Z^{i}_{0}(a)=k}\sum_{j=1, j\neq i}^{N^{(t)}_{t-a}}A^{j}(k,a)^{3}\mathcal{N}(da,di)\notag\\
&+12\int_{[0,t]\times \mathbb{N}}\mathds{1}_{Z^{i}_{0}(a)=k}A^{i}(k,a)\sum_{j_{1},j_{2}=1, j_{1}\neq j_{2}\neq i}^{N^{(t)}_{t-a}}A^{j_{1}}(k,a)A^{j_{2}}(k,a)\mathcal{N}(da,di)\notag\\
&+24\int_{[0,t]\times \mathbb{N}}\mathds{1}_{Z^{i}_{0}(a)=k}A^{i}(k,a)\sum_{j_{1}=1, j_{1}\neq i}^{N^{(t)}_{t-a}}A^{j_{1}}(k,a)A^{j_{1}}(k,a)\mathcal{N}(da,di)\notag\\
&+12\int_{[0,t]\times \mathbb{N}}\mathds{1}_{Z^{i}_{0}(a)=k}\sum_{j_{1},j_{2}=1, j_{1}\neq j_{2}\neq i}^{N^{(t)}_{t-a}}A^{j_{1}}(k,a)^{2}A^{j_{2}}(k,a)\mathcal{N}(da,di).
\end{align}
The decomposition of the sum in form
\[
\sum_{u_{1:3}=1}^{N^{(t)}_{t-a}},
\]
has then been made to distinguish independence properties in our calculation. Actually, as soon as, $i\neq j$, $A^{i}(k,a)$ is independent from $A^{i}(k,a)$ (see \cite{CH} for details).
It is essential to note that the expectation of these integrals with respect to the random measure $\mathcal{N}$ are all calculated thanks to Theorem 3.1 of \cite{CH}.
So, taking the expectation now leads to,
\begin{align*}
\mathbb{E}_{t}\left[A(k,t)^{4}\right]=&
4\int_{[0,t]}\theta\mathbb{E}_{a}\left[N^{(t)}_{t-a}\right]\mathbb{E}_{a}\left[\mathds{1}_{Z_{0}(a)=k}A(k,a)^{3}\right]\theta da\\
&+4\int_{[0,t]}\theta\mathbb{P}_{a}\left(Z_{0}(a)=k\right)\mathbb{E}_{a}\left[\left(N^{(t)}_{t-a}\right)_{(4)}\right]\mathbb{E}_{a}\left[A(k,a)\right]^{3}da\\
&+12\int_{[0,t]}\theta\mathbb{E}_{a}\left[\mathds{1}_{Z_{0}(a)=k}A(k,a)^{2}\right]\mathbb{E}_{a}\left[\left(N^{(t)}_{t-a}\right)_{(2)}\right]\mathbb{E}_{a}\left[A(k,a)\right]da\\
&+4\int_{[0,t]}\theta\mathbb{P}_{a}\left(Z_{0}(a)=k\right)\mathbb{E}_{a}\left[\left(N^{(t)}_{t-a}\right)_{(2)}\right]\mathbb{E}_{a}\left[A(k,a)^{3}\right]da\\
&+12\int_{[0,t]}\theta\mathbb{E}_{a}\left[\mathds{1}_{Z_{0}(a)=k}A(k,a)\right]\mathbb{E}_{a}\left[\left(N^{(t)}_{t-a}\right)_{(3)}\right]\mathbb{E}_{a}\left[A(k,a)\right]^{2}da\\
&+24\int_{[0,t]}\theta\mathbb{E}_{a}\left[\mathds{1}_{Z_{0}(a)=k}A(k,a)\right]\mathbb{E}_{a}\left[\left(N^{(t)}_{t-a}\right)_{(2)}\right]\mathbb{E}_{a}\left[A(k,a)^{2}\right]da\\
&+12\int_{[0,t]}\theta\mathbb{P}_{a}\left(Z_{0}(a)=k\right)\mathbb{E}_{a}\left[\left(N^{(t)}_{t-a}\right)_{(3)}\right]\mathbb{E}_{a}\left[A(k,a)^{2}\right]\mathbb{E}_{a}\left[A(k,a)\right]da.\\
\end{align*}
Using the same method for all the other terms and that, for any positive real number $a$ lower than $t$,
\[
N_{t}=\sum_{i=1}^{N^{(t)}_{t-a}}N^{(i)}_{a},
\]
we get Lemma \ref{lem:fourthMoment} by reassembling similar terms together. The last term is obtained using the geometric distribution of $N_{t}$ under $\mathbb{P}_{t}$.
\end{proof}

\section{Boundedness of the fourth moment}\begin{lem}
We begin the proof of the boundedness of the fourth moment by some estimates.
\label{lem:CompAs}
\begin{equation}
\label{eq:CompAs1}
\tag{i}
\mathbb{E}_{t}\left[\left(A(k,t)-c_{k}N_{t}\right) \right]=\mathcal{O}\left(e^{-(\theta-\alpha)t} \right),
\end{equation}
\begin{equation}
\label{eq:CompAs2}
\tag{ii}
\mathbb{E}_{t}\left[\left(A(k,t)-c_{k}N_{t}\right)^{3} \right]=\mathcal{O}\left(W(t)^{2} \right),
\end{equation}
\begin{equation}
\label{eq:CompAs3}
\tag{iii}
\mathbb{E}_{t}\left[\left(A(k,t)-c_{k}N_{t}\right)^{2} \right]=\mathcal{O}\left(W(t)\right),
\end{equation}
\begin{equation}
\label{eq:CompAs4}
\tag{iv}
\mathbb{E}_{t}N_{t}^{n}=\mathcal{O}(e^{n\alpha t}), \quad n\in\mathbb{N}^{*},
\end{equation}
\begin{equation}
\label{eq:CompAs5}
\tag{v}
\mathbb{P}_{t}\left(Z_{0}(t)=k\right)=\mathcal{O}(e^{(\alpha-\theta)t}).
\end{equation}
\end{lem}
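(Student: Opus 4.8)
\section*{Proof proposal for Lemma~\ref{lem:CompAs}}

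The plan is to obtain the five bounds directly from the explicit formulas already recorded, the last one, (ii), being the only real work. Estimate (iv) is immediate: under $\mathbb{P}_t$ the variable $N_t$ is geometric with parameter $1/W(t)$ by \eqref{eq:loiNt}, so $\mathbb{E}_t\bigl[N_t(N_t-1)\cdots(N_t-n+1)\bigr]=n!\,(1-1/W(t))^{n-1}W(t)^n$, hence $\mathbb{E}_tN_t^n=\mathcal{O}(W(t)^n)=\mathcal{O}(e^{n\alpha t})$ by Lemma~\ref{lem: asyComp}. Estimate (v) is read off \eqref{eq:loizzero}: $\mathbb{P}_t(Z_0(t)=k)=e^{-\theta t}W(t)W_\theta(t)^{-2}(1-1/W_\theta(t))^{k-1}$, and since $\theta>\alpha$, Lemma~\ref{lem: asyComp} gives $W_\theta(t)\to\theta/\psi(\theta)$, so the $W_\theta$-factors stay bounded and $\mathbb{P}_t(Z_0(t)=k)=\mathcal{O}(e^{-\theta t}W(t))=\mathcal{O}(e^{(\alpha-\theta)t})$. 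For (i) I would combine \eqref{eq:Mom1akate}, \eqref{eq:ckt} and \eqref{eq:momNt} to write $\mathbb{E}_t[A(k,t)-c_kN_t]=W(t)\bigl(c_k(t)-c_k\bigr)$, where $c_k(t)\uparrow c_k$; bounding $0\le c_k-c_k(t)=\int_t^\infty\theta e^{-\theta s}W_\theta(s)^{-2}(1-1/W_\theta(s))^{k-1}\,ds\le\int_t^\infty\theta e^{-\theta s}\,ds=e^{-\theta t}$ (using $W_\theta\ge1$) gives $|\mathbb{E}_t[A(k,t)-c_kN_t]|\le W(t)e^{-\theta t}=\mathcal{O}(e^{-(\theta-\alpha)t})$.

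Estimate (iii) follows from the exact second-moment identities. Expanding $\mathbb{E}_t[(A(k,t)-c_kN_t)^2]=\mathbb{E}_tA(k,t)^2-2c_k\mathbb{E}_t[A(k,t)N_t]+c_k^2\mathbb{E}_tN_t^2$ and inserting \eqref{eq:Mom2akate} (divided by $\mathbb{P}(N_t>0)\to\alpha/b$), \eqref{eq:DerEq} and the geometric moments of $N_t$ under $\mathbb{P}_t$, the contributions of order $W(t)^2$ assemble into $2W(t)^2(c_k(t)-c_k)^2$, which is $\mathcal{O}(e^{2\alpha t}e^{-2\theta t})=o(1)$ by (i); the surviving terms each carry a single factor $W(t)$ and are of the form $W(t)\int_0^t\theta\,\mathbb{P}_a(Z_0(a)=k)\,da$ or $W(t)\int_0^t\theta\,W(a)^{-1}\mathbb{E}_a[N_a\mathds{1}_{Z_0(a)=k}]\,da$. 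The first integral converges since $\mathbb{P}_a(Z_0(a)=k)=\mathcal{O}(e^{(\alpha-\theta)a})$ by (v); the second since $\mathbb{E}_a[N_a\mathds{1}_{Z_0(a)=k}]\le(\mathbb{E}_aN_a^2)^{1/2}\mathbb{P}_a(Z_0(a)=k)^{1/2}=\mathcal{O}(e^{\alpha a}e^{(\alpha-\theta)a/2})$ by (iv) and (v), so $W(a)^{-1}\mathbb{E}_a[N_a\mathds{1}_{Z_0(a)=k}]=\mathcal{O}(e^{(\alpha-\theta)a/2})$ is integrable. Hence $\mathbb{E}_t[(A(k,t)-c_kN_t)^2]=o(1)+\mathcal{O}(W(t))=\mathcal{O}(W(t))$.

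The substantial point is (ii), and the plan is to mimic the derivation of Lemma~\ref{lem:fourthMoment} one degree lower: cube $A(k,t)-c_kN_t$, expand $A(k,t)^3$, $A(k,t)^2N_t$ and $A(k,t)N_t^2$ via \eqref{eq:recFormula} (with $l=3$) together with $N_t=\sum_{i=1}^{N^{(t)}_{t-a}}N^{(i)}_a$, integrate against $\mathcal{N}$ by Theorem~3.1 of \cite{CH}, and treat the pure clonal term $-c_k^3\mathbb{E}_tN_t^3$ via the geometric law of $N_t$. This produces a finite sum of integrals $\int_0^t\theta\,(W(t)/W(a))^j\,(1-W(a)/W(t))^{j-1}\,\bigl(\text{a product of }\mathbb{E}_a\text{-quantities at time }a\bigr)\,da$, $j\in\{1,2,3\}$, plus $-c_k^3\mathbb{E}_tN_t^3$. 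The integrals with $j\in\{1,2\}$, and those with $j=3$ carrying an indicator $\mathds{1}_{Z_0(a)=k}$ inside a compound expectation, I would bound directly: each carries a damping factor $\mathbb{E}_a[A(k,a)-c_kN_a]=\mathcal{O}(e^{-(\theta-\alpha)a})$ by (i), or $\mathbb{P}_a(Z_0(a)=k)=\mathcal{O}(e^{(\alpha-\theta)a})$ by (v), or an indicator $\mathds{1}_{Z_0(a)=k}$ absorbed by Cauchy--Schwarz against (v), the remaining $\mathbb{E}_a$-factors being controlled by (iv) and (iii); against the prefactor $(W(t)/W(a))^j=\mathcal{O}(e^{j\alpha(t-a)})$ this makes the $a$-integrand integrable and contributes $\mathcal{O}(e^{2\alpha t})=\mathcal{O}(W(t)^2)$. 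The main obstacle is the remaining, maximally diagonal terms: the clonal term $-c_k^3\mathbb{E}_tN_t^3$ and the $j=3$ integrals with a bare factor $\mathbb{P}_a(Z_0(a)=k)$ are each of exact order $e^{3\alpha t}$ and cannot be controlled in isolation, so they must be combined and their $e^{3\alpha t}$ parts shown to cancel. That cancellation is forced by the structure of the formula --- it is built from the centered difference $A(k,t)-c_kN_t$, for which $e^{-\alpha t}(A(k,t)-c_kN_t)\to0$ (Theorem~\ref{thm:ASCVakate}, and already in $L^2$ by (i) and (iii)), so no $e^{3\alpha t}$ contribution can persist --- but extracting it requires carrying the precise asymptotics of $W$, $W_\theta$, $\mathbb{P}_a(Z_0(a)=k)$ and $\mathbb{E}_a[A(k,a)-c_kN_a]$ from Lemmas~\ref{lem: asyComp}, \ref{lem:WComp} and parts (i), (v) through the computation. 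Once this cancellation is performed, what survives is $\mathcal{O}(e^{2\alpha t})=\mathcal{O}(W(t)^2)$, which is (ii).
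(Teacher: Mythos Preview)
Your treatment of (i), (iii), (iv), (v) matches the paper's: (i) from \eqref{eq:Mom1akate} and \eqref{eq:momNt}, (iii) referred to the second-moment computation in \cite{CH}, (iv) and (v) from the geometric law \eqref{eq:loiNt} and the clonal law \eqref{eq:loizzero} together with Lemma~\ref{lem: asyComp}.

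For (ii) you take a different route from the paper. You propose to expand $\mathbb{E}_t[(A(k,t)-c_kN_t)^3]$ directly via \eqref{eq:recFormula} and then exhibit the cancellation of the $W(t)^3$ contributions by hand. The paper instead uses the elementary pointwise bound $|A(k,t)-c_kN_t|\le N_t$ (valid because $0\le A(k,t)\le N_t$ and $0\le c_k\le 1$) to get
\[
\bigl|\mathbb{E}_t[(A(k,t)-c_kN_t)^3]\bigr|\le \mathbb{E}_t\bigl[N_t\,(A(k,t)-c_kN_t)^2\bigr],
\]
and then computes the right-hand side by the same recursive machinery. Both expressions still contain a $W(t)^3$ piece (the $j=3$ integral against $c_k^{\,2}\mathbb{E}_tN_t^3$, resp.\ $c_k^{\,3}\mathbb{E}_tN_t^3$) that must be shown to cancel, and the paper handles that cancellation ``by an analysis similar to the proof of Lemma~\ref{lem: thridBounded}''. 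The gain of the paper's detour is structural: $\mathbb{E}_t[N_t(A-c_kN)^2]$ has fewer cross-terms than $\mathbb{E}_t[(A-c_kN)^3]$ (only one uncentered factor $N_t$ and a square of the centered one), so the bookkeeping and the identification of the top-order cancellation are shorter. Your approach works, but be aware that the sentence ``that cancellation is forced by $e^{-\alpha t}(A(k,t)-c_kN_t)\to 0$ in $L^2$'' is heuristic only: $L^2$ convergence does not by itself control the third moment, so the cancellation really has to be carried out explicitly, as you in fact acknowledge in the last clause.
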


\begin{proof}
Relation \eqref{eq:CompAs1} is easily obtained using the expectation of $N_{t}$ and $A(k,t)$ using  \eqref{eq:Mom1akate}, \eqref{eq:ck} and the behaviour of $W$ provided by Proposition \ref{lem:WComp}. The relation \eqref{eq:CompAs3} has been obtained in the proof of Theorem 6.1 in \cite{CH}.
The two last relations are easily obtained from \eqref{eq:loiNt}, \eqref{eq:loizzero} and Lemma \ref{lem: asyComp}.
The relation \eqref{eq:CompAs2} is obtained using the following estimation,
\[
\left|\mathbb{E}_{t}\left[\left(A(k,a)-c_{k}N_{a}\right)^{3} \right] \right|\leq \mathbb{E}_{t}\left[N_{a}\left(A(k,a)-c_{k}N_{a}\right)^{2} \right].
\]

We begin the proof by computing the r.h.s. of the previous inequality using the same techniques as in Appendix A.
\begin{align*}
\mathbb{E}\left[A(k,t)^{2}N_{t}\right]&=2\int_{0}^{t}\theta\frac{W(t)}{W(a)}\mathbb{E}\left[N_{a}A(k,a)\mathds{1}_{Z_{0}(a)=k}\right]da\\
+&4\int_{0}^{t}\theta\frac{W(t)^{2}}{W(a)^{2}}\left(1-\frac{W(a)}{W(t)}\right)\mathbb{E}\left[N_{a}\mathds{1}_{Z_{0}(a)=k}\right]\mathbb{E}\left[A(k,a)\right]da\\
+&4\int_{0}^{t}\theta\frac{W(t)^{2}}{W(a)^{2}}\left(1-\frac{W(a)}{W(t)}\right)\mathbb{E}\left[A(k,a)\mathds{1}_{Z_{0}(a)=k}\right]\mathbb{E}\left[N_{a}\right]da\\
+&4\int_{0}^{t}\theta\frac{W(t)^{2}}{W(a)^{2}}\left(1-\frac{W(a)}{W(t)}\right)\mathbb{P}_{a}\left(Z_{0}(a)=k\right)\mathbb{E}\left[A(k,a)N_{a}\right]da\\
+&12\int_{0}^{t}\theta\frac{W(t)^{3}}{W(a)^{3}}\left(1-\frac{W(a)}{W(t)}\right)^{2}\mathbb{P}_{a}\left(Z_{0}(a)=k\right)\mathbb{E}\left[A(k,a)\right]\mathbb{E}\left[N_{a}\right]da.
\end{align*}

\begin{align*}
2\mathbb{E}\left[A(k,t)N_{t}^{2}\right]&=2\int_{0}^{t}\theta\frac{W(t)}{W(a)}\mathbb{E}\left[N_{a}^{2}\mathds{1}_{Z_{0}(a)=k}\right]da\\
+&8\int_{0}^{t}\theta\frac{W(t)^{2}}{W(a)^{2}}\left(1-\frac{W(a)}{W(t)}\right)\mathbb{E}\left[N_{a}\mathds{1}_{Z_{0}(a)=k}\right]\mathbb{E}\left[N_{a}\right]da\\
+&4\int_{0}^{t}\theta\frac{W(t)^{2}}{W(a)^{2}}\left(1-\frac{W(a)}{W(t)}\right)\mathbb{P}_{a}\left(Z_{0}(a)=k\right)\mathbb{E}\left[N_{a}^{2}\right]da\\
+&12\int_{0}^{t}\theta\frac{W(t)^{3}}{W(a)^{3}}\left(1-\frac{W(a)}{W(t)}\right)^{2}\mathbb{P}_{a}\left(Z_{0}(a)=k\right)\mathbb{E}\left[N_{a}\right]^{2}da.
\end{align*}
Finally,
\begin{align*}
\mathbb{E}\left[N_{t}\left(A(k,t)-c_{k}N_{t}\right)^{2}\right]&=2\int_{0}^{t}\theta\frac{W(t)}{W(a)}\mathbb{E}\left[N_{a}\left(A(k,a)-c_{k}N_{a}\right)\mathds{1}_{Z_{0}(a)=k}\right]da\\
+&4\int_{0}^{t}\theta\frac{W(t)^{2}}{W(a)^{2}}\left(1-\frac{W(a)}{W(t)}\right)\mathbb{E}\left[N_{a}\mathds{1}_{Z_{0}(a)=k}\right]\mathbb{E}\left[A(k,a)-c_{k}N_{a}\right]da\\\\
+&4\int_{0}^{t}\theta\frac{W(t)^{2}}{W(a)^{2}}\left(1-\frac{W(a)}{W(t)}\right)\mathbb{E}\left[\left(A(k,a)-c_{k}N_{a}\right)\mathds{1}_{Z_{0}(a)=k}\right]\mathbb{E}\left[N_{a}\right]da\\
+&4\int_{0}^{t}\theta\frac{W(t)^{2}}{W(a)^{2}}\left(1-\frac{W(a)}{W(t)}\right)\mathbb{P}_{a}\left(Z_{0}(a)=k\right)\mathbb{E}\left[N_{a}\left(A(k,a)-c_{k}N_{a}\right)\right]da\\
+&12\int_{0}^{t}\theta\frac{W(t)^{3}}{W(a)^{3}}\left(1-\frac{W(a)}{W(t)}\right)^{2}\mathbb{P}_{a}\left(Z_{0}(a)=k\right)\mathbb{E}\left[N_{a}\right]\mathbb{E}\left[A(k,a)-c_{k}N_{a}\right]da\\
+&c_{k}^{2}\mathbb{E}_{t}N_{t}^{3}.
\end{align*}
Now, an analysis similar to the one of Lemma \ref{lem: thridBounded} leads to the result.
\end{proof}
\begin{proof}[Proof of Lemma \ref{lem: thridBounded}]

The ideas of the proof, is to analyses one to one every terms of the expression of
\[
\mathbb{E}_{t}\left[\left(A(k,t)-c_{k}N_{t}\right)^{4} \right],
\]
 given by Lemma \ref{lem:fourthMoment} using Lemma \ref{lem:CompAs} to show that they behave as $\mathcal{O}\left(W(t)^{2} \right)$. Since the ideas are the same for every terms, we just give a few examples.

First of all, we consider
\[
\int_{[0,t]}\frac{W(t)}{W(a)}\mathbb{E}_{a}\left[\mathds{1}_{Z_{0}(a)=k}\left(A(k,a)-c_{k}N_{a}\right)^{3}\right]da.
\]
Using Lemma \ref{lem:CompAs} \eqref{eq:CompAs2}, we have
\begin{multline*}
\int_{[0,t]}\frac{W(t)}{W(a)}\mathbb{E}_{a}\left[\mathds{1}_{Z_{0}(a)=k}\left(A(k,a)-c_{k}N_{a}\right)^{3}\right]da=\mathcal{O}\left(W(t)^{2}\right). \\
\end{multline*}

Now take the term
\[
\int_{[0,t]}\frac{W(t)^{2}}{W(a)^{2}}\mathbb{E}_{a}\left[\mathds{1}_{Z_{0}(a)=k}N_{a}^{2}\right]\mathbb{E}_{a}\left[\left(A(k,a)-c_{k}N_{a}\right)\right]da,
\]
we have from Lemma \ref{lem:CompAs} \eqref{eq:CompAs1} and \eqref{eq:CompAs4},
\begin{multline*}
\int_{[0,t]}\frac{W(t)^{2}}{W(a)^{2}}\mathbb{E}_{a}\left[\mathds{1}_{Z_{0}(a)=k}N_{a}^{2}\right]\mathbb{E}_{a}\left[\left(A(k,a)-c_{k}N_{a}\right)\right]da\leq \int_{[0,t]}\frac{W(t)^{2}}{W(a)^{2}}\mathbb{E}_{a}\left[N_{a}^{2}\right]e^{-(\theta-\alpha)a}da=\mathcal{O}\left(W(t)^{2} \right) .\\
\end{multline*}
Every term in $W(t)$ or $W(t)^{2}$ are treated this way.
Now, we consider the term in $W(t)^{4}$ which is

\[
I:=96\int_{[0,t]}\frac{W(t)^{4}}{W(a)^{4}}\mathbb{P}_{a}\left(Z_{0}(a)=k\right)\mathbb{E}_{a}\left[\left(A(k,a)-c_{k}N_{a}\right)\right]^{3}da+24W(t)^{4}c_{k}^{4},
\]
 since $N_{t}$ is geometrically distributed under $\mathbb{P}_{t}$, and that
\begin{equation}
\label{eq:devNt4}
\mathbb{E}_{t}N_{t}^{4}=24W(t)^{4}-36W(t)^{3}+\mathcal{O}(W(t)^{2}).
\end{equation}
On the other hand, using the law of $Z_{0}(t)$ given by \eqref{eq:loizzero} and the expectation of $A(k,t)$ given by \eqref{eq:Mom1akate} (under $\mathbb{P}_{t}$), we have,
\begin{align*}
96&\int_{[0,t]}\frac{W(t)^{4}}{W(a)^{4}}\mathbb{P}_{a}\left(Z_{0}(a)=k\right)\mathbb{E}_{a}\left[\left(A(k,a)-c_{k}N_{a}\right)\right]^{3}da\\
&=-96W(t)^{4}\int_{0}^{t}\frac{\theta e^{-\theta a}}{W_{\theta}(a)^{2}}\left(1-\frac{1}{W_{\theta}(a)} \right)^{k-1}\left(\int_{0}^{a}\frac{\theta e^{-\theta s}}{W_{\theta}(s)^{2}}\left(1-\frac{1}{W_{\theta}(s)} \right)^{k-1}\ ds\right)^{3} \ da\\
&=-24W(t)^{4}\left(\int_{0}^{t}\frac{\theta e^{-\theta a}}{W_{\theta}(a)^{2}}\left(1-\frac{1}{W_{\theta}(a)} \right)^{k-1}\ da\right)^{4}.
\end{align*}
Finally,
\begin{equation*}
I=24W(t)^{4}\left(\int_{t}^{\infty}\frac{\theta e^{-\theta a}}{W_{\theta}(a)^{2}}\left(1-\frac{1}{W_{\theta}(a)} \right)^{k-1}\ da\right)^{4}=\mathcal{O}\left(W(t)^{4}e^{-4\theta t} \right)=o(1).
\end{equation*}

The last example is the most technical and relies with the term in $W(t)^{3}$, which is, using \eqref{eq:devNt4} and Lemma \ref{lem:fourthMoment},
\begin{align*}
J:=&72\int_{[0,t]}\frac{W(t)^{3}}{W(a)^{3}}\mathbb{E}_{a}\left[\mathds{1}_{Z_{0}(a)=k}\left(A(k,a)-c_{k}N_{a}\right)\right]\mathbb{E}_{a}\left[\left(A(k,a)-c_{k}N_{a}\right)\right]^{2}da\\
&+72\int_{[0,t]}\frac{W(t)^{3}}{W(a)^{3}}\mathbb{P}_{a}\left(Z_{0}(a)=k\right)\mathbb{E}_{a}\left[\left(A(k,a)-c_{k}N_{a}\right)^{2}\right]\mathbb{E}_{a}\left[A(k,a)-N_{a}c_{k} \right]da\\
&-288\int_{[0,t]}\frac{W(t)^{3}}{W(a)^{3}}\mathbb{P}_{a}\left(Z_{0}(a)=k\right)\mathbb{E}_{a}\left[\left(A(k,a)-c_{k}N_{a}\right)\right]^{3}da-36c_{k}^{4}W(t)^{3}.\\
\end{align*}
On the other hand, using the calculus made in the proof of Theorem 6.3 of \cite{CH}, we have
\begin{align*}
\mathbb{E}_{a}&\left[\left(A(k,a)-c_{k}N_{a}\right)^{2} \right]\\
=&4\int_{[0,a]}\frac{W(a)^{2}}{W(s)^{2}}\left(1-\frac{W(s)}{W(a)} \right)\mathbb{P}_{s}\left(Z_{0}(s)=k\right)\mathbb{E}_{a}\left(A(k,s)-c_{k}N_{s}\right)ds\\
&+2\int_{[0,a]}\frac{W(s)}{W(a)}\ \mathbb{E}_{a}\left[\mathds{1}_{Z_{0}(s)=k}\left(A(k,s)-c_{k}N_{s}\right)\right]ds
+c_{k}^{2}W(a)^{2}\left(2-\frac{1}{W(a)} \right).
\end{align*}

Substituting this last expression in $J$ leads to
\begin{align*}
J&=-144\int_{[0,t]}\frac{W(t)^{3}}{W(a)^{3}}\mathbb{E}_{a}\left[\mathds{1}_{Z_{0}(a)=k}\left(A(k,a)-c_{k}N_{a}\right)\right]\int_{[a,\infty]}\frac{\mathbb{P}\left(Z_{0}(a)=k \right)}{W(s)^{2}}\mathbb{E}_{a}\left[\left(A(k,s)-c_{k}N_{s}\right)\right]dsda\\
&+144W(t)^{3}\int_{[0,t]}\frac{1}{W(a)}\ \mathbb{E}_{a}\left[\mathds{1}_{Z_{0}(a)=k}\left(A(k,a)-c_{k}N_{a}\right)\right]\int_{[a,t]}\frac{1}{W(s)^{2}}\mathbb{P}_{s}\left(Z_{0}(s)=k\right)\mathbb{E}_{a}\left[A(k,s)-N_{s}c_{k} \right]da\\
&-144c_{k}^{2}\int_{[0,t]}\frac{W(t)^{3}}{W(a)}\mathbb{P}_{a}\left(Z_{0}(a)=k\right)\mathbb{E}_{a}\left[A(k,a)-N_{a}c_{k} \right]da\\
&+144\int_{[0,t]}\frac{W(t)^{3}}{W(a)^{3}}\mathbb{P}\left(Z_{0}(a)=k \right)\mathbb{E}_{a}\left[A(k,a)-N_{a}c_{k} \right]^{3}da\\
&-288\int_{[0,t]}\frac{W(t)^{3}}{W(a)^{2}}\mathbb{P}_{a}\left(Z_{0}(a)=k\right)\int_{[0,a]}\frac{1}{W(s)}\mathbb{P}_{s}\left(Z_{0}(s)=k\right)\mathbb{E}_{a}\left(A(k,s)-c_{k}N_{s}\right)ds\mathbb{E}_{a}\left[A(k,a)-N_{a}c_{k} \right]da\\
&+72\int_{[0,t]}\frac{W(t)^{3}}{W(a)}\mathbb{P}_{a}\left(Z_{0}(a)=k\right)c_{k}^{2}\left(2-\frac{1}{W(a)} \right)\mathbb{E}_{a}\left[A(k,a)-N_{a}c_{k} \right]da\\
&-288\int_{[0,t]}\frac{W(t)^{3}}{W(a)^{3}}\mathbb{P}_{a}\left(Z_{0}(a)=k\right)\mathbb{E}_{a}\left[\left(A(k,a)-c_{k}N_{a}\right)\right]^{3}da-36c_{k}^{4}W(t)^{3}.\\
\end{align*}
Using many times that,
\begin{align*}
\int_{[0,t]}&\frac{\theta \mathbb{P}\left(Z_{0}(a)=k \right)}{W(s)^{2}}\mathbb{E}_{a}\left[\left(A(k,s)-c_{k}N_{s}\right)\right]ds\\=&-\int_{[0,t]}\frac{\theta e^{-\theta s}}{W_{\theta}(s)^{2}}\left(1-\frac{1}{W_{\theta}(s)} \right)^{k-1}\int_{[s,\infty]}\frac{\theta e^{-\theta u}}{W_{\theta}(u)^{2}}\left(1-\frac{1}{W_{\theta}(u)} \right)^{k-1}duds\\
=&\frac{c_{k}^{2}}{2}-\frac{1}{2}\left(\int_{[t,\infty]}\frac{\theta e^{-\theta s}}{W_{\theta}(s)^{2}}\left(1-\frac{1}{W_{\theta}(s)} \right)^{k-1}ds\right)^{2},\\
\end{align*}
thanks to \eqref{eq:loizzero}, \eqref{eq:Mom1akate}, and \eqref{eq:NtNoCond}, we finally get
\begin{align*}
J=&-144\left(c_{k}^{2}-c_{k}(t)^{2} \right)\int_{[0,t]}\frac{W(t)^{3}}{W(a)^{3}}\mathbb{E}_{a}\left[\mathds{1}_{Z_{0}(a)=k}\left(A(k,a)-c_{k}N_{a}\right)\right]da\\
&+36W(t)^{3}\left(c_{k}^{2}\left(\int_{[t,\infty]}\frac{W(t)^{3}}{W(a)^{3}}\mathbb{E}_{a}\left[A(k,a)-N_{a}c_{k} \right]^{3}da\right)^{2}-\left(\int_{[t,\infty]}\frac{W(t)^{3}}{W(a)^{3}}\mathbb{E}_{a}\left[A(k,a)-N_{a}c_{k} \right]^{3}da\right)^{4}\right)\\
&+144\left(c_{k}-c_{k}(t) \right)^{2}\int_{[0,t]}\frac{W(t)^{3}}{W(a)}\mathbb{E}_{a}\left[A(k,a)-N_{a}c_{k} \right]da\\
&+36W(t)^{3}\left(c_{k}-c_{k}(t) \right)^{4}.\\
\end{align*}

This shows that $J$ is  $\mathcal{O}\left(W(t)^{2} \right)$.
\end{proof}

\bibliographystyle{plain}
\bibliography{biblio.tex}
\end{document}